\documentclass[11pt]{amsart}
\usepackage{extarrows}
\usepackage{latexsym}

\usepackage{multirow}
\usepackage{makecell, colortbl}
\usepackage{booktabs, makecell, tabularx}
\usepackage{lscape}

\usepackage{wrapfig}
\usepackage{multicol}
\usepackage{mathdots}
\usepackage{stmaryrd}

\usepackage{amssymb,amsmath,amscd}
\usepackage[pdftex]{graphicx}
\usepackage{enumerate}
\usepackage[dvipsnames]{xcolor}
\usepackage{tikz, ifthen}
\usetikzlibrary{patterns}
\usepackage[lmargin=1in, rmargin=1in, tmargin=0.95in, bmargin=0.95in]{geometry}
\usepackage{listings}
\usepackage{courier}
\usepackage{tikz-cd, wrapfig}
\usetikzlibrary{math}
\usetikzlibrary{decorations.pathreplacing}
\usepackage{color}
\usepackage{MnSymbol}
\usepackage{hyperref}
\usepackage{mathrsfs, colonequals}

\newcommand{\pp}{\mathbb{P}}
\newcommand{\qq}{\mathbb{Q}}

\newcommand{\zz}{\mathbb{Z}}

\newcommand{\C}{\mathcal{C}}

\renewcommand{\H}{\mathcal{H}}

\renewcommand{\L}{\mathcal{L}}
\newcommand{\M}{\mathcal{M}}
\renewcommand{\O}{\mathcal{O}}

\newcommand{\R}{\mathcal{R}}

\newcommand{\N}{\mathcal{N}}
\newcommand{\D}{\mathcal{D}}

\newcommand{\W}{\mathcal{W}}

\newcommand{\GL}{\text{GL}}

\newcommand{\PGL}{\operatorname{PGL}}
\newcommand{\Sym}{\operatorname{Sym}}

\newcommand{\Spec}{\operatorname{Spec}}

\newcommand{\Pic}{\operatorname{Pic}}

\renewcommand{\Im}{\operatorname{Im}}

\newcommand{\End}{\operatorname{End}}

\newcommand{\ev}{\mathrm{ev}}

\renewcommand{\bar}{\overline}
\newcommand{\Gr}{\operatorname{Gr}}

\newcommand{\BN}{\operatorname{BN}}

\makeatletter
\newcommand{\oset}[3][0ex]{%
  \mathrel{\mathop{#3}\limits^{
    \vbox to#1{\kern-2\ex@
    \hbox{$\scriptstyle#2$}\vss}}}}
\makeatother

\newcommand{\posmod}{\oset{+}{\rightarrow}}

\newcommand{\negmod}{\oset{-}{\rightarrow}}

\newcommand{\defi}[1]{\textsf{#1}} 
\usetikzlibrary{decorations.pathreplacing}

\newtheorem{thm}{Theorem}[section]

\newtheorem{lem}[thm]{Lemma}
\newtheorem{conj}[thm]{Conjecture}
\newtheorem{prop}[thm]{Proposition}

\newtheorem{cor}[thm]{Corollary}

\theoremstyle{definition}
\newtheorem{defin}[thm]{Definition}

\newtheorem{example}[thm]{Example}

\newtheorem*{question*}{Question}
\newtheorem{question}[thm]{Question}
\newtheorem{warning}[thm]{{\fontencoding{U}\fontfamily{futs}\selectfont\char 49\relax} Warning}

\theoremstyle{remark}
\newtheorem{rem}{Remark}

\DeclareMathVersion{normal2}

\title{Recent advances in Brill--Noether theory and the geometry of Brill--Noether curves}
\author{Isabel Vogt}

\newlength{\myindent}
\begin{document}

\maketitle

\begin{abstract}
The first goal of this article is to survey recent progress in Brill--Noether theory, including both the 
the study of the moduli space of maps from a curve to projective space and the geometry of the resulting curves in projective space.  The second goal is to introduce newcomers to some of the important techniques that have been introduced or developed in the last decade that made these advances possible.
\end{abstract}

\section{Introduction}

Brill--Noether theory is one of the oldest subjects in algebraic geometry.  Classically, an algebraic curve was synonymous with a \(1\)-dimensional subset of projective space defined by polynomial equations.  Riemann's definition of an abstract curve split this study in two: the study of abstract curves of genus \(g\) and their moduli space \(\M_g\), and the study of all ways of mapping an abstract curve of genus \(g\) to projective space \(\pp^r\) with degree \(d\).  This second subject is Brill--Noether theory.  The name is in honor of Brill and M.~Noether who conjectured in 1874 a precise formula for when a \textit{general} curve of some genus admits a projective realization with specified invariants \cite{bn}.  It took until 1980 for this result to be proven by Griffiths--Harris \cite{gh_80}, building on earlier work of Kempf \cite{kempf} and Kleimann--Laksov \cite{kl}.  Subsequent work in the 1980s by Gieseker \cite{gp}, Fulton--Lazarsfeld \cite{fl} and Eisenbud--Harris \cite{im} culminated in the proof of the Brill--Noether Theorem (see Theorem~\ref{thm:BN} below), which gives a good description of the moduli space of maps from a general curve of genus \(g\) to \(\pp^r\) of degree \(d\).
Since then, there have been many new and influential proofs of aspects of this result using limit linear series \cite{EH_gp}, K3 surfaces \cite{laz_bn}, Bridgeland stability \cite{bayer}, and tropical geometry \cite{cdpr, jp_gp}.

Despite all of this work at the end of the 20th century, Brill--Noether theory remains an active area today.  The progress is focused in three main areas: (1) Brill--Noether theory for special curves and understanding the loci in \(\M_g\) consisting of curves with unexpected projective realizations, (2) the geometry of the genus \(g\) and degree \(d\) curves in \(\pp^r\) whose existence is guaranteed by the Brill--Noether Theorem, and (3) applications of Brill--Noether theory to the birational geometry of \(\bar{\M}_{g}\).
In the first area, the last decade has seen a complete analogue of the Brill--Noether theorem for curves of fixed gonality \cite{l_rbn, cpj1, cpj2, llv}, as well as progress on dimension and (ir)reducibility of the loci in \(\M_g\) with unexpected \(g^r_d\)'s \cite{Pf_neg}, and a complete answer to which loci are maximal with respect to containment \cite{AHK}.  Progress in the second area includes a proof of Severi's Maximal Rank Conjecture from 1915 \cite{mrc} and a complete solution to the interpolation problem for Brill--Noether curves \cite{interpolation}.  Finally, progress on the Strong Maximal Rank Conjecture has yielded a proof that \(\bar{\M}_{22}\) and \(\bar{\M}_{23}\) are of general type \cite{fjp}.

This article is organized as follows.  In Section~\ref{sec:background} we introduce the main objects of study and the classic Brill--Noether Theorem, with an eye towards developments in the past decade.  Section~\ref{sec:results} is devoted to a survey of these recent advances, covering both results on moduli spaces and new insights into the geometry of the curves in projective space.  Section~\ref{sec:techniques} has a different emphasis.  It is intended for readers new to the subject who wish to gain familiarity with some of the techniques underlying these recent breakthroughs.  To keep the exposition accessible, rather than sketching the most cutting edge results, we present detailed modern treatments of simpler results whose original proofs appeared earlier.  
Some of these arguments do not appear elsewhere in the literature, including the streamlined proof of the Brill--Noether Nonexistence Theorem in Section~\ref{sec:BN_nonexist}, and a degenerative proof of the Brill--Noether Existence Theorem in Section~\ref{sec:BN_exist}, although they are known to experts.  We hope their inclusion here will make these techniques more boradly accessible.

The topics touched on in this article are necessarily skewed towards the topics closest to my own work in the subject.  It should not be viewed as an exhaustive account of all interesting and important work in Brill--Noether theory in the last decade.  For a survey with both distinct and overlapping topics, see \cite{jp_survey}.  On a related note, the focus is exclusively on maps from a curve to projective space, and so completely ignores Brill--Noether theory for higher rank vector bundles on a curve and Brill--Noether questions for higher dimensional varieties.  We refer the reader to the recent surveys \cite{gm_survey, newstead_survey,  cn_survey} for introductions to these topics.

\subsection*{Acknowledgements} Thank you to Izzet Coskun, Gavril Farkas, Richard Haburcak, Joe Harris, Dave Jensen, Eric Larson, Hannah Larson, Feiyang Lin, and Nathan Pflueger for conversations that have shaped my understanding of Brill--Noether theory and for feedback on this article.  I also thank the National Science Foundation for supporting my work through DMS-2338345.

\section{Background}\label{sec:background}

Let \(C\) be a smooth, projective curve defined over an algebraically closed field \(K\).  The motivating question of Brill--Noether theory is the following.

\begin{question}\label{ques:BN}
What is the geometry of the space of maps from \(C\) to \(\pp^r\) of degree \(d\)?
\end{question}

A map \(C \to \pp^r\) is called \defi{nondegenerate} if the image is not contained in a hyperplane.  It makes sense to study nondegenerate maps, since a degenerate map to \(\pp^r\) is a nondegenerate map to a smaller projective space.
Recall that the data of a nondegenerate map \(C \to \pp^r\) of degree \(d\) is equivalent to the data of a line bundle \(L\) on \(C\) of degree \(d\) and \(r+1\) sections \(\sigma_0, \dots, \sigma_r \in H^0(C, L)\) spanning a basepoint-free subspace of dimension \(r+1\).  A fundamental player in this story is therefore the locus
\begin{equation}\label{bn_loci}
W^r_dC \colonequals \{[L] \in \Pic^d_{C} : \dim H^0(C, L) \geq r+1\}.\end{equation}
(When \(r = -1\), we recover \(W^{-1}_dC = \Pic^d_{C}\).)  The locus \(W^r_dC\) inherits a scheme structure as a degeneracy locus of a map of vector bundles, as we now explain; see \cite[Chapter IV, Section 3]{acgh} for more details.  Let \(\L\) be a Poincar\'e bundle on \(C \times \Pic^d_C\).  Consider the second projection \(\pi \colon C \times \Pic^d_C \to \Pic^d_C\). Given points \(p_1, \dots, p_N \in C\) for \(N \gg 0\), let \(D \colonequals p_1 + \cdots + p_N\) and let \(\D \colonequals p_1 \times \Pic^d_C + \cdots + p_N \times \Pic^d_C\) be the corresponding divisor of relative degree \(N\).  By the theorem on cohomology and base change, \(\pi_*\L(\D)\) is a vector bundle of rank \(d + N + 1 - g\) whose fiber over a point \([L] \in \Pic^d_C\) is isomorphic to \(H^0(C, L(D))\).  By construction, the pushforward \(\pi_* \L(\D)|_{\D}\) is a vector bundle of rank \(N\) whose fiber over a point \([L]\in \Pic^d_C\) is isomorphic to \(L(D)|_D\).  Restriction to \(\D\) gives a map
\begin{equation}\label{eq:bn_degen}
 \pi_* \L(\D) \to \pi_*\L(\D)|_{\D},
\end{equation}
specializing to \(H^0(C, L(D)) \to L(D)|_D\) over \([L]\).
The locus \(W^r_dC\) is defined to be the degeneracy locus where the map \eqref{eq:bn_degen} has a kernel of dimension at least \(r+1\), i.e., where it has rank at most \(d -g -r + N\).  This description is independent of the choices made \cite[Chapter IV, Remark 3.2]{acgh}.

The first natural question about \(W^r_dC\) is when it is nonempty.  Of course, this is a subtle question that depends on the geometry of the curve \(C\).  
For example, if \(C\) is a hyperelliptic curve of genus \(g\) and \(0< n< g-1\), then the \(n\)th power of the hyperelliptic line bundle exhibits that \(W^n_{2n} C\) is nonempty; but Clifford's theorem guarantees that \(W^{n}_{2n} C = \emptyset\) if \(C\) is nonhyperelliptic \cite[Chapter III, Section 1]{acgh}.
Nevertheless, the fact that \(W^r_dC\) is a degeneracy locus of the map \eqref{eq:bn_degen} of vector bundles means that it has expected codimension 
\[\text{(dim of kernel)\(\cdot\)(dim of cokernel)} = (r+1)(g-d+r)\]
in the \(g\)-dimensional \(\Pic^d_C\).  This is the modern phrasing of the calculation that Brill and Noether did to conjecture that \textit{for general \(C\)} the locus \(W^r_dC\) is nonempty if and only if
\begin{equation}\label{eq:BN}\tag{BN}
\rho(g, r, d) \colonequals g - (r+1)(g-d+r) \geq 0.
\end{equation}
The quantity \(\rho(g, r, d)\) is known as the \defi{Brill--Noether number}.  

\begin{thm}[The Brill--Noether theorem]\label{thm:BN}
Fix integers \(g, r, d\) with \(g \geq 2\).
Let \(C\) be a general curve of genus \(g\) (i.e., \([C]\) is in a dense open in \(\M_g\)).
\begin{enumerate}
\item (Griffiths--Harris \cite[Main Theorem II(a)]{gh_80}) \(W^r_d C\) has dimension \(\min(g, \rho(g, r, d))\). In particular, if \(\rho(g, r, d) < 0\), then \(W^r_dC = \emptyset\). \label{eq:BN_dim}
\item (Gieseker \cite[Theorem 1.1]{gp}, cf. Remark~\ref{rem:gp}) The locus \(W^r_d C\) is normal, Cohen--Macaulay,  and is smooth away from \(W^{r+1}_d C\).  In characteristic \(0\), it has rational singularities. \label{eq:BN_gp}
\item (Kempf \cite[Corollary, page 15]{kempf}, Kleiman--Laksov \cite{kl}, \cite[Proposition 4]{kl2}) When \(\rho(g, r, d) = 0\), 
\begin{align*}
N(g, r, d)   \colonequals \# W^r_d C &= g!\prod_{\alpha = 0}^r \frac{\alpha!}{(g - d + r + \alpha)!}\\
&=\text{ \# Standard Young Tableaux on \((r+1) \times (g - d+r)\) diagram.} 
\end{align*}
\item (Fulton--Lazarsfeld \cite[Corollary, page 1]{fl}, Eisenbud--Harris \cite[Theorem 1]{im}, Edidin \cite[Theorem 1]{edidin}) When \(\rho(g,r,d) > 0\),  \(W^r_d C\) is irreducible.  
When \(\rho(g,r,d) = 0\), the universal Brill--Noether locus \(\mathcal{W}^r_d\) has a unique component dominating \(\M_g\), and if \(r+1 \neq g- d + r\), then the monodromy of this cover contains \(A_{N(g, r, d)}\). \label{eq:BN_mon}
\end{enumerate}
\end{thm}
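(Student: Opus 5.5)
Since Theorem~\ref{thm:BN} collects several results, I would prove them through one degeneration and treat each part as a consequence. The common tool is the limit of $W^r_dC$ when $C$ degenerates to a flag curve: a rational spine with $g$ elliptic tails attached at general points, or equivalently a chain of $g$ elliptic curves. On that curve I would use Eisenbud--Harris limit linear series, or, in the modern form of Section~\ref{sec:techniques}, a chain of elliptic curves with torsion conditions. The aim is a description of the limit of $W^r_dC$ as a union of Schubert-type strata indexed by combinatorial data, namely fillings of the $(r+1)\times(g-d+r)$ rectangle.

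\textbf{Part \eqref{eq:BN_dim}.} Upper semicontinuity of fiber dimension in the relative family $\mathcal{W}^r_d\to\bar{\M}_g$ reduces the upper bound to the special curve. On an elliptic chain, each component $E_i$ with nodes $p_i,q_i$ imposes a ramification condition relating the vanishing sequences at $p_i$ and $q_i$. Since $p_i-q_i$ is not torsion, the limit series can gain at most one unit of ramification on $E_i$, and only at one index. Additivity of adjusted Brill--Noether numbers then bounds the dimension of the space of refined limits by $\rho$, and it is empty if $\rho<0$. For the lower bound I would use the degeneracy-locus description \eqref{eq:bn_degen}: every component has dimension at least $\rho$ once the locus is nonempty. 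Nonemptiness when $\rho\ge0$ follows from the Kempf/Kleiman--Laksov computation that the class $[W^r_dC]$ (Porteous formula in terms of $\theta$) is nonzero, or, degeneratively, from smoothing a limit linear series, which is possible because of the dimension bound.

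\textbf{Parts (2)--(4).} For \eqref{eq:BN_gp} I would use the Gieseker--Petri theorem, that is, injectivity of $\mu_0\colon H^0(L)\otimes H^0(K-L)\to H^0(K)$. This gives smoothness of $W^r_dC\setminus W^{r+1}_dC$ with tangent space of dimension $\rho$. The dimension count from Part \eqref{eq:BN_dim} shows the determinantal locus has expected codimension, so it is Cohen--Macaulay. Normality then follows from Serre's criterion, since the singular locus $W^{r+1}_d$ has codimension at least $2$ by part \eqref{eq:BN_dim}; rational singularities come from the Kempf desingularization $G^r_d\to W^r_d$. The Petri injectivity is itself proven by degeneration: a kernel element would limit to a compatible collection of kernels on the elliptic chain components, and these vanish by a local order count.

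Part (3) is Porteous's formula. Expanding $\theta^{(r+1)(g-d+r)}$ times the determinant against $\theta^g/g!=1$ gives the hook-length count, which equals the number of standard Young tableaux. Alternatively, on the chain the refined limits with $\rho=0$ correspond bijectively, and reducedly, to such tableaux. For \eqref{eq:BN_mon}, irreducibility when $\rho>0$ follows from Fulton--Lazarsfeld connectedness (ample $\pi_*$ data) combined with smoothness in codimension one. The monodromy statement would be proven by degenerating to special curves where two tableaux are swapped, giving transpositions or $3$-cycles, and then arguing double transitivity.

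The main obstacle is the monodromy claim, which I expect to be the hardest part. It requires building enough explicit loops in the boundary to generate $A_N$ and excluding imprimitive actions; this is where the hypothesis $r+1\ne g-d+r$ enters, since the rectangle's transpose symmetry obstructs the argument in the square case.
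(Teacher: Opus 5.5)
Part~\eqref{eq:BN_mon} is where the proposal has real gaps. The first is a false step with an easy fix. Connectedness plus smoothness in codimension one does not imply irreducibility: two planes in \(\mathbb{A}^4\) meeting in a point are connected and singular only in codimension \(2\). What you need is normality, which you already derived in part~\eqref{eq:BN_gp}. A connected normal scheme is irreducible, because its local rings are domains, so distinct components cannot meet. Alternatively, \(G^r_dC\to W^r_dC\) has connected Grassmannian fibres, so \(G^r_dC\) is connected, and it is smooth by Gieseker--Petri.

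The second problem is that the \(\rho=0\) assertions are not proved. You name the group-theoretic endgame: a doubly transitive group containing a \(3\)-cycle contains \(A_N\). But you give no degeneration producing a \(3\)-cycle or transposition. You also give no argument for double transitivity, or even for transitivity; transitivity is exactly the uniqueness of the dominating component of \(\W^r_d\), which you never address.

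Also, the hypothesis \(r+1\neq g-d+r\) is not an artifact of the method, as you suggest. When \(d=g-1\), the map \(L\mapsto K_C\otimes L^\vee\) is an involution of \(W^r_{g-1}C\) that commutes with monodromy. It is fixed-point-free for general \(C\), since a fixed point would put \(\Lambda^2H^0(L)\neq 0\) in the kernel of \(\mu_L\). So the monodromy is imprimitive and does not contain \(A_N\) once \(r\ge 2\) (e.g.\ \(N=42\) for \((g,r,d)=(9,2,8)\)). The transpose symmetry of the rectangle is the combinatorial shadow of this involution.

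The rest of your outline is sound but differs from what the paper does; the paper itself proves only pieces of Theorem~\ref{thm:BN}.

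For part~\eqref{eq:BN_gp}, Remark~\ref{rem:gp} uses Petri injectivity to make the frame bundle over \(W^r_dC\) smooth over the universal determinantal variety. Then all local properties transfer at once, with no input from part~\eqref{eq:BN_dim}: normal, Cohen--Macaulay, smooth off the smaller-rank locus, and rational singularities in characteristic \(0\). Your Hochster--Eagon-plus-Serre route to normal and Cohen--Macaulay is valid. However, a smooth birational model \(G^r_dC\) does not by itself give rational singularities. It is the local model that identifies \(G^r_dC\to W^r_dC\) with a Kempf collapsing whose higher direct images vanish.

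In part (3), the Porteous degree counts points only because \(W^r_dC\) is reduced. Deduce this from part~\eqref{eq:BN_gp}: \(\rho(g,r+1,d)<0\) forces \(W^{r+1}_dC=\emptyset\).

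For part~\eqref{eq:BN_dim}, your elliptic-chain bound rests on the same fact as Section~\ref{sec:BN_nonexist}. Equality \(a^{i-1}_n+b^i_{r-n}=d\) forces \eqref{eq:star}, so it occurs for at most one \(n\) per component. The paper records limits as \(r\)-positive limit line bundles and uses only semicontinuity over all twists. It therefore needs neither limit linear series nor a relative compactification, but in exchange it proves only emptiness, not the dimension bound. Your route does need a compactification: the Eisenbud--Harris relative space of limit linear series, not \(\W^r_d\) over \(\bar{\M}_g\).

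For existence, Section~\ref{sec:BN_exist} uses embedded degeneration and \(h^1(f^*T_{\pp^r})=0\). This also yields a generically smooth dominating component whose general member is an embedding, but it requires \(r\ge 3\). Your Kempf--Kleiman--Laksov class argument covers all \(r\) and every curve.
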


Some aspects of the Brill--Noether Theorem hold for arbitrary smooth curves \(C\) of genus \(g\): if \(\rho(g, r, d) \geq 0\), then the locus \(W^r_dC\) has dimension at least \(\min(g, \rho(g, r, d))\) \cite[Section 4]{kempf}, \cite{kl} (see also \cite[Main Theorem I]{gh_80}) and  if \(\rho(g, r, d) > 0\), then \(W^r_dC\) is connected \cite[Theorem 1]{fl}.

The original proofs of these results often assumed that the ground field had characteristic \(0\), but this assumption can be removed from all parts.  
It was observed by Welters \cite[Section 2.7]{welters} that that in the context of limit linear series, chains of elliptic curves are better-suited to studying Brill--Noether theory in positive characteristic.  Imitating the limit linear series proofs in the literature \cite{EH_gp, im}, while replacing flag curves with chains of elliptic curves, can yield characteristic-independent proofs of all parts of Theorem~\ref{thm:BN}.  Implementing this replacement is most subtle in the \(\rho(g,r,d)=0\) case of part \eqref{eq:BN_mon}, see~\cite[Section 10]{llv}.
Theorem 1 and the Corollary in \cite{fl} hold in arbitrary characteristic \cite[Remark 2.8]{fl}.  

While the original proofs of parts~\eqref{eq:BN_dim} and \eqref{eq:BN_gp} of Theorem~\ref{thm:BN} used degeneration, Lazarsfeld reproved this in characteristic \(0\) using K3 surfaces and avoiding degeneration \cite[Theorem, page 1]{laz_bn}.  
The impact of the work \cite{laz_bn} is hard to overstate: the new tools it introduced to the study of Brill--Noether general curves made possible, for example,  Voisin's proof of the generic case of Green's Conjecture \cite{v_green1, v_green2}, as well as more recent breakthroughs, like the determination of the maximal Brill--Noether loci \cite{AHK} presented in Theorem~\ref{thm:max} below.
Recently, building off of this perspective, Arbarello--Bruno--Farkas--Sacc\`a \cite[Corollary 1.3 and Section 5]{abfs} gave explicit constructions of smooth Brill--Noether general curves of every genus defined over \(\qq\).

The Brill--Noether theorem is most interesting when \(d < g+ r\), and so by the Riemann--Roch theorem, a bundle \(L\) in \(W^r_dC\) necessarily has \(h^1(C, L) \neq 0\).  In the regime, Theorem~\ref{thm:BN}\eqref{eq:BN_dim} implies that a general line bundle \(L\) in \(W^r_dC\) has 
\(h^0(C, L)  = r+1\).  

\begin{rem}[The Gieseker--Petri Theorem and the singularities of \(W^r_dC\)]\label{rem:gp}
The \defi{Petri map} for a line bundle \(L\) is the multiplication map
\begin{equation}
\mu_L \colon H^0(C, L) \otimes H^0(C, K_C\otimes L^\vee) \to H^0(C, K_C).
\end{equation}
Gieseker proved in \cite[Theorem 1.1]{gp} that if \(C\) is general, then for every \(L \in \Pic^d_C\), the Petri map \(\mu_L\) is injective.  Since \(W^r_dC\) is a degeneracy locus of \eqref{eq:bn_degen}, choosing frames for the source and target yields a natural \(  \GL_{d + N +1-g}\times \GL_{N} \)-bundle over \(W^r_dC\).  The injectivity of the Petri map then guarantees that this bundle is smooth over the corresponding universal determinatal locus in the space of matrices.  This universal determinantal locus:
\begin{itemize}
\item Is smooth away from the determinantal sublocus corresponding to strictly smaller rank.
\item Is normal and Cohen--Macaulay \cite[Propositions 6.1.5(a)-(b) and 6.2.3(b)]{weyman}.   
\item Has rational singularities in characteristic \(0\) \cite[Proposition 6.15(b)]{weyman}. 
\end{itemize}
Hence \(W^r_dC\) shares all of these properties.
A resolution of \(W^r_dC\) is given by the locus
\[G^r_dC = \{(L, V) : L\in W^r_d C, V \in \Gr(r+1,  H^0(C, L))\},\]
which is naturally defined from the degeneracy locus formalism, as we now recall.  Consider the Grassmann bundle \(\psi \colon \Gr(r+1, \pi_* \L(\D))\to \Pic^d_C\) with universal subbundle \(S \to \psi^* \pi_* \L(\D)\).  The locus \(G^r_dC\) is defined to be the kernel of the composite map \(S \to \psi^* \pi_*\L(\D)|_{\D}\)
 (see \cite[Chapter IV, Section 3]{acgh} for more details).
\end{rem}

Another immediate consequence of Theorem~\ref{thm:BN}\eqref{eq:BN_dim} in this regime is that when \(r \geq 1\), a general line bundle in \(W^r_dC\) is basepoint free; indeed, a line bundle with a basepoint necessarily lies in the image of \(W^r_{d-1}C \times (W^0_1C \simeq C)\) under the tensor product map \(\Pic^{d-1}_C \times \Pic^1_C \to \Pic^d_C\), which has dimension at most \(\rho(g, r, d-1) + 1 < \rho(g, r, d)\).
When \(d \geq g + r\), using Serre duality, the locus of line bundles with a basepoint lies in the image of \(W^0_{2g - d -1} \times W^0_1 \to W^r_d\) sending \((M, p)\) to \(K_C \otimes M(p)\), which again has dimension at most \(2g - d \leq g-r \leq g-1\).
The Brill--Noether theorem, therefore, gives a good answer to Question~\ref{ques:BN} when \(C\) is of general moduli.

Although this is phrased for a single curve \(C\), the fact that it holds over a dense open in \(\M_g\) means that the Brill--Noether Theorem has the following consequence for the moduli space \(\M_g(\pp^r, d)\) of maps of degree \(d\) from a smooth curve of genus \(g\) to \(\pp^r\).

\begin{cor}\label{cor:BN_comp}\hfill
\begin{enumerate}
\item If \(\rho(g, r, d) \geq 0\),  there is a unique irreducible component \(\M_g(\pp^r, d)^{\BN}\) of \(\M_g(\pp^r, d)\) parameterizing nondegenerate maps and for which the map \(\M_G(\pp^r, d)^{\BN} \to \M_g\) taking the moduli of the source curve is dominant.  If \(\rho(g, r, d) < 0\),  no such component exists.
\item The \defi{Brill--Noether component} \(\M_g(\pp^r, d)^{\BN}\) is generically smooth of dimension 
\[\dim \M_g + \dim \PGL_{r+1} + \rho(g, r, d) = (r+1)d - (r-3)(g-1).\]
\end{enumerate}
\end{cor}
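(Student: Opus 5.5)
Over a dense open \(U\subset\M_g\) where Theorem~\ref{thm:BN} holds, I would describe the part of \(\M_g(\pp^r,d)\) lying over \(U\) explicitly and then compute dimensions fiberwise. A nondegenerate map \(f\colon C\to\pp^r\) of degree \(d\) is the same as a line bundle \(L=f^*\O(1)\in W^r_dC\), an \((r+1)\)-dimensional basepoint-free subspace \(V\subseteq H^0(C,L)\), and a basis of \(V\) up to scaling. So the locus of nondegenerate maps over \(U\) maps to the relative \(\mathcal{G}^r_d\to U\) (the relative version of \(G^r_dC\) from Remark~\ref{rem:gp}). Its image is the open set where \(V\) is basepoint free, and each fiber is a \(\PGL_{r+1}\)-torsor. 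Up to the finite automorphism group of \(C\), which does not affect dimension or irreducibility, this gives \(\dim = \dim\M_g + \dim G^r_dC + (r+1)^2-1\).

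Next I would analyze \(G^r_dC\) for general \(C\). The map \(G^r_dC\to W^r_dC\) is an isomorphism over \(W^r_dC\smallsetminus W^{r+1}_dC\). By Theorem~\ref{thm:BN}\eqref{eq:BN_dim}, \(W^{r+1}_dC\) has dimension \(\rho(g,r+1,d)<\rho(g,r,d)\) in the range \(d<g+r\), or is empty. Gieseker--Petri (Remark~\ref{rem:gp}) says \(G^r_dC\) is smooth of dimension \(\rho\), since its tangent space is the kernel-dual of the Petri map. Moreover, \(W^r_dC\) is irreducible when \(\rho>0\) by Theorem~\ref{thm:BN}\eqref{eq:BN_mon}. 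In the range \(d\ge g+r\), \(G^r_dC\) is a Grassmannian bundle over \(\Pic^d_C\), hence irreducible of dimension \(g+(r+1)(d-g-r)=\rho\).

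Irreducibility needs care when \(\rho=0\), because there \(W^r_dC\) is a finite set. In that case I would use the second statement of Theorem~\ref{thm:BN}\eqref{eq:BN_mon}: the universal \(\W^r_d\) has a unique component dominating \(\M_g\). Globally over \(U\), the relative \(\mathcal{G}^r_d\) therefore has a unique component dominating \(U\), and it is smooth by Petri.

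The basepoint-free locus is open and nonempty, by the basepoint discussion after Remark~\ref{rem:gp} for \(r\ge1\); the case \(r=0\) is trivial. So the maps over \(U\) form an irreducible smooth variety whose closure is \(\M_g(\pp^r,d)^{\BN}\). It is a component because it is open in \(\M_g(\pp^r,d)\), being the preimage of the open set \(U\). Any other component dominating \(\M_g\) would meet the preimage of \(U\), giving contradictory irreducible pieces there, which proves uniqueness. If \(\rho<0\), then \(W^r_dC=\emptyset\) for \(C\in U\), so no dominating component exists.

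For part (2), generic smoothness follows from the smoothness of the torsor over the smooth \(\mathcal{G}^r_d\). The dimension is \(3g-3+(r+1)^2-1+g-(r+1)(g-d+r)\). Expanding, this equals \((r+1)d-(r-3)(g-1)\): the terms \((r+1)^2-(r+1)r-1=r\) combine with \(4g-3-(r+1)g\) to give \(-(r-3)g+r-3\).

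The main obstacle is making the \(\rho=0\) irreducibility and the smoothness rigorous. One has to identify the Zariski tangent space of \(\M_g(\pp^r,d)\) at a general point with the expected deformation space, using \(H^1(N_f)\) vanishing deduced from Petri injectivity. The alternative is to carefully pass through the relative Grassmannian construction.
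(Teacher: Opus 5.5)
Your proposal is correct and takes the route the paper intends: the paper gives no separate proof and simply derives the corollary from Theorem~\ref{thm:BN} holding over a dense open of \(\M_g\), and your argument is that derivation written out (nondegenerate maps as a \(\PGL_{r+1}\)-torsor over the basepoint-free part of the relative \(\mathcal{G}^r_d\), dimension and irreducibility from parts \eqref{eq:BN_dim} and \eqref{eq:BN_mon}, smoothness from Gieseker--Petri). The one step to tighten is irreducibility of \(G^r_dC\) for \(\rho>0\): it is not a Grassmannian bundle over all of \(\Pic^d_C\) when \(g+r\le d\le 2g-2\), and \(\dim W^{r+1}_dC<\rho\) alone is not enough, but the part of \(G^r_dC\) over \(\{L : h^0(C,L)=r+1+k\}\) has dimension at most \(\rho(g,r,d)-k(g-d+r+k)\), which is strictly less than \(\rho(g,r,d)\) except on the generic stratum, so the pure dimension \(\rho\) from Petri forces \(G^r_dC\) to be irreducible.
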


The good description of the moduli space \(\M_g(\pp^r, d)\) afforded by the Brill--Noether theorem raises the question of understanding the geometry of the resulting maps.  A first step in this direction is the following.

\begin{thm}[{The Embedding Theorem, Eisenbud--Harris \cite[Theorem 1]{EH83} for \(p=1\), Farkas \cite[Corollary 0.3]{farkas_p}}]\label{thm:emb}
If \(r \geq 2p + 1\), then a general map in \(\M_g(\pp^r, d)^{\BN}\) is \(p\)-very ample.
\end{thm}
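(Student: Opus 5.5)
The Embedding Theorem is a statement about a general point of $\M_g(\pp^r,d)^{\BN}$, so I will prove it by a dimension count on the locus of bad maps. Recall that $L$ is $p$-very ample if for every effective divisor $D$ of degree $p+1$ the restriction $H^0(C,L)\to H^0(C,L|_D)$ is surjective, i.e.
\[
h^0(L(-D)) = h^0(L)-(p+1).
\]
I restrict to the interesting range $d<g+r$; when $d\geq g+r$ the bundle $L$ is nonspecial of large degree and the claim is a direct Riemann--Roch argument.

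By Theorem~\ref{thm:BN}\eqref{eq:BN_dim}, a general $L\in W^r_dC$ on a general curve has $h^0(L)=r+1$. We may also take the general map to be given by the complete series: for $d<g+r$ and $r\geq 1$, a general point of $W^r_dC$ has $h^0=r+1$ exactly, so the component is dominated by complete series. Failure of $p$-very ampleness then means there is $D\in C_{p+1}$ with $h^0(L(-D))\geq r+1-p$. In that case $L(-D)\in W^{r-p}_{d-p-1}C$.

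The bad locus therefore lies in the image of the addition map
\[
W^{r-p}_{d-p-1}C\times C_{p+1}\to \Pic^d_C .
\]
Applying Theorem~\ref{thm:BN}\eqref{eq:BN_dim} to a general $C$, this image has dimension at most $\rho(g,r-p,d-p-1)+p+1$. The key computation is
\[
\rho(g,r-p,d-p-1) = g-(r-p+1)(g-d+r) = \rho(g,r,d) - p(g-d+r).
\]
So the bad locus has dimension at most $\rho(g,r,d)-p(g-d+r)+p+1$. This is strictly less than $\rho(g,r,d)$ exactly when $p(g-d+r)>p+1$, i.e. when $g-d+r\geq 2$ for $p\geq 2$, or $g-d+r\geq 3$ for $p=1$.

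The hypothesis $r\geq 2p+1$ must be used to handle the small cases $g-d+r\in\{1,2\}$ (or when $\rho$ is $0$). When $g-d+r=1$, $L=K_C(-E)$ with $E$ effective of degree $2g-2-d=g-r-1$, a general effective divisor. In that case $p$-very ampleness amounts to $h^0(K_C(-E-D))=h^0(K_C(-E))-(p+1)$. By Riemann--Roch this is equivalent to
\[
h^0(E+D)=1 .
\]
So $E+D$ must move in no pencil, which says the general curve has no $g^1_{g-r+p}$. That holds when $\rho(g,1,g-r+p)<0$, i.e. $g-2(r-p+1)<0$; compare with the gonality $\lceil (g+2)/2\rceil$. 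One needs $g-r+p<\lfloor (g+3)/2\rfloor$. I would handle this carefully by instead counting $E$ as general in $C_{g-r-1}$ and requiring that $E+D$ avoid the $\rho(g,1,g-r+p)+1$-dimensional union of pencils, which uses $r\geq 2p+1$. The case $g-d+r=2$ with $p=1$ is similar: the dimension count is only an equality, and I would refine it.

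I expect the main obstacle to be the borderline cases, including $\rho=0$, where $W^r_dC$ is finite and no dimension slack is available. There the count above is not enough. I would instead use a degeneration: limit linear series on a flag curve or chain of elliptic curves, showing that for the finitely many limit $g^r_d$'s no $D$ imposes too few conditions. This can be done by a ramification-count inequality at the nodes; the condition $r\geq 2p+1$ is exactly what makes that count strictly fail, in the same way it does for secant-plane counts. Alternatively, for $\rho=0$ one can increase $\rho$ by adding a general point and use semicontinuity on the universal $\W^r_d$.
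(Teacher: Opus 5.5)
The paper gives no proof of this statement (it only cites Eisenbud--Harris and Farkas), so I am judging your argument on its own. It has a genuine gap at its central step.

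\textbf{The key computation is wrong, and with the correct value the count fails.} With $r'=r-p$ and $d'=d-p-1$ you have $g-d'+r'=g-d+r+1$, so
\[
\rho(g,r-p,d-p-1)=g-(r-p+1)(g-d+r+1)=\rho(g,r,d)+p(g-d+r)-(r-p+1).
\]
Even your intermediate expression $g-(r-p+1)(g-d+r)$ equals $\rho(g,r,d)+p(g-d+r)$, not $\rho(g,r,d)-p(g-d+r)$. So the image of $W^{r-p}_{d-p-1}C\times C_{p+1}$ can have dimension up to $\rho(g,r,d)+p(g-d+r)-(r-2p)$. This is less than $\rho(g,r,d)$ only when $p(g-d+r)<r-2p$, or when $\rho(g,r-p,d-p-1)<0$. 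The dependence on $g-d+r$ is therefore the opposite of what you claim. The naive count works for small $g-d+r$ (at $g-d+r=0$ it gives exactly $r\geq 2p+1$) and breaks down in most of the Brill--Noether range. For example, $(g,r,d,p)=(13,3,13,1)$ has $g-d+r=3$ and $\rho=1$, so you would treat it by the count. But $\rho(13,2,11)=1$, so the image of $W^2_{11}C\times C_2$ is $3$-dimensional and tells you nothing about the $1$-dimensional $W^3_{13}C$.

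\textbf{What is missing.} Passing to the full image of the addition map forgets that $M(D)$, for $M=L(-D)$, must lie in $W^r_dC$. When $h^0(M)=r-p+1$, this forces $D$ to impose at most one condition on $H^0(K_C\otimes M^\vee)$, which is very restrictive when $h^1(M)$ is large. What has to be bounded is the incidence $\{(L,D) : L\in W^r_dC,\ D\in C_{p+1},\ h^0(L(-D))\geq r+1-p\}$, whose expected dimension is $\rho(g,r,d)+(p+1)-(r-p+1)=\rho(g,r,d)-(r-2p)$. This is where $r\geq 2p+1$ genuinely enters. Proving that the secant loci of a general $g^r_d$ on a general curve have this expected dimension is the real content of the theorem. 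In general it cannot be extracted from Theorem~\ref{thm:BN}\eqref{eq:BN_dim} by parameter counts: when $\rho=0$, for instance, one must show that a positive-dimensional locus misses the finite set $W^r_dC$, which no dimension count decides. Eisenbud--Harris and Farkas handle this by degeneration and limit linear series. You gesture at such an argument only for $\rho=0$, but it is needed wherever the naive count fails, as in the example above. Your $g-d+r=1$ argument via pencils through $E+D$ is correct and does use $r\geq 2p+1$. Your alternative for $\rho=0$ does not work, however. The bundle $L(q)$ has $q$ as a base point, so it is never $p$-very ample. Moreover, openness only transfers $p$-very ampleness from special members of $W^r_{d+1}C$ to general ones, not back.
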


In particular, the general map in the Brill--Noether component is an embedding once \(r \geq 3\).  Justified by this, 
we term the maps parameterized by the Brill--Noether component \defi{Brill--Noether curves}.  This gives a precise meaning to the term ``general curve in projective space."

Another consequence of the fact that the Brill--Noether theorem holds for a general curve is that the complement is an interesting naturally defined locus in \(\M_g\).

\begin{defin}
The \defi{Brill--Noether locus} \(\M^r_{g, d}\) is the locus of curves \(C\in \M_g\) for which \(W^r_dC \neq \emptyset\). 
\end{defin}

When \(r=0\) and \(d > 0\), \(W^0_dC = \Im(\Sym^d_C \to \Pic^d_C)\) is always nonempty and so \(\M^0_{g, d} = \M_g\).
By Serre duality we have \(\M^r_{g,d}= \M^{g-d+r-1}_{g, 2g-2-d}\); to remove this redundancy we may assume that \(d \leq g-1\).

When \(\rho(g, r, d) \geq 0\), the locus \(\M^r_{g,d}\) is all of \(\M_g\), but when \(\rho(g, r, d) < 0\) it is a proper subvariety by the Brill--Noether Theorem.
The expected codimension of \(\M^r_{g, d}\) is \(\max(0,-\rho(g, r, d))\).  Steffen proved in \cite[Theorem 0.1]{steffen} that every component of \(\M^r_{g, d}\) has codimension at most \(\max(0,-\rho(g, r, d))\). 
If \(\rho(g, r, d) = -1\), this combined with a theorem of Eisenbud--Harris \cite[Theorem (1.1)(ii)]{eh_m1} shows that \(\M^r_{g,d}\) is an irreducible divisor in \(\M_g\).  Eisenbud--Harris computed the class of the closure of this divisor (up to an overall positive constant) in \(\bar{\M}_g\) \cite[Theorem 1]{eisenbud_harris_mg}.
When \(g+1\) is composite, this suffices to prove that \(\M_g\) is of general type for \(g \geq 24\) \cite[Theorem A]{eisenbud_harris_mg}.  When \(\rho(g, r, d) \leq -2\),  \cite[Theorem (1.1)(i)]{eh_m1} shows that \(\M^r_{g,d}\) has no divisorial components, and so, combined with  \cite[Theorem 0.1]{steffen}, this proves that when \(\rho(g, r, d) = -2\), the locus \(\M^r_{g,d}\) is equidimensional of codimension \(2\).  
Building off of work of Edidin \cite{edidin_rhom2} that determines a basis for the codimension \(2\) rational homology of \(\bar{\M}_{g}\) when \(g \geq 12\), Tarasca computed the classes of the closures of the codimension \(2\) loci \(\M^1_{2k, k}\) in the tautological ring of \(\bar{\M}_{2k}\) \cite[Theorem 2]{tarasca}.
Equidimensionality of \(\M^r_{g,d}\) when \(\rho(g, r, d) = -3\) and \(g \geq 12\) follows from work of Edidin \cite[Theorem 0.1]{edidin_m3} (again, utilizing \cite{edidin_rhom2}), combined with \cite{eh_m1, steffen}.

\section{Important recent results}\label{sec:results}

\subsection{Moduli aspects of Brill--Noether theory}
Since the Brill--Noether theorem gives a good description of \(W^r_dC\) for a general curve \(C\), most recent work focuses on the setting where the Brill--Noether theorem fails: namely for \textit{special} curves.  Let us begin with what is known about the Brill--Noether loci \(\M^r_{g,d}\) themselves when \(\rho(g, r, d) < 0\).  
When \(\rho(g, r, d)\) is quite negative (on the order of  \(-\dim \M_g\) or smaller), it is not hard to find components of \(\M^r_{g,d}\) with dimension strictly bigger than the expected dimension \(\min(-1, 3g-3+\rho(g, r, d))\): for example Castelnuovo extremal curves curves in \(\pp^r\) for \(r\geq 3\) and \(d\) sufficiently large give rise to a component of unexpectedly large dimension in \(\M_g\) \cite[Remark 1.4]{Pf_neg}.
The results of Hurwitz--Brill--Noether theory summarized in Section~\ref{sec:hbn} can also be used to give components of unexpectedly large dimension as in \cite[Appendix A.1]{Pf_neg}; we give an explicit such example from \cite{ht_red} in Example~\ref{ex:1123} in the next subsection.

On the one hand, there are some values of \((r,d)\) for which the irreducibility and codimension of \(\M^r_{g,d}\) is well-understood.  For example,
\(\M^1_{g, k}\) is the image of the Hurwitz space \(\H_{k,g}\) of degree \(k\) genus \(g\) covers of \(\pp^1\) (up to projective equivalence on \(\pp^1\)) since the \(g^1_k\) on a general curve in \(\M^1_{g,k}\) is basepoint free\footnote{Indeed, it suffices to show that the locus of curves in \(\M_g\) with a \(g^1_k\) with a basepoint is in the closure of the image of \(\H_{k,g}\).  This can be proven by smoothing out the nodal union of a degree \(k-m\) cover and \(m\) copies of \(\pp^1\) mapping with degree \(1\).}.  It is classical that the dimension of \(\H_{k,g}\) is \(2g + 2k - 5 = 3g - 3 + \rho(g, 1, k)\).  Furthermore, it is now known by recent work of Christ--He--Tyomkin \cite[Theorem A]{cht2} that \(\H_{k,g}\) is irreducible in all characteristics (in characteristics greater than \(k\) this is due to Fulton \cite{fulton_irred}).   
Using deformation theory, one can show that the map \(\H_{k,g} \to \M_g\) is generically finite onto its image when \(\rho(g, 1, k) < 0\) and so \(\M^1_{g,k}\) is always irreducible of the expected dimension.

In another direction, instead of fixing \(r\) to be small, we can fix \(-\rho(g, r, d)\) to be small.  The most systematic result in this direction is the following.

\begin{thm}[{Pflueger \cite[Theorem A]{Pf_neg}}]\label{thm:Pf_neg}
Suppose that \(r+1\) and \(g -d + r\) are both at least \(2\) and that \(-g+3\leq \rho(g, r, d) < 0\).  Then \(\M^r_{g,d}\) has an irreducible component of the expected dimension.
\end{thm}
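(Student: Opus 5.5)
The plan is to use a degenerative argument with limit linear series on chains of elliptic curves, with \emph{non-generic} attachment points, and to run a dimension count for the resulting locus in \(\bar{\M}_g\). Steffen's theorem already says every component of \(\M^r_{g,d}\) has codimension at most \(-\rho(g,r,d)\). So it suffices to produce a point \([C]\in\M^r_{g,d}\) and a component \(Z\) of \(\M^r_{g,d}\) through it whose codimension is at most \(-\rho\), i.e.\ to bound the dimension of a component from \emph{above} by \(3g-3+\rho\). Concretely, we write \(\rho = -k\) with \(1\le k\le g-3\).

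We degenerate to an elliptic chain \(E_1\cup\cdots\cup E_g\) glued at \(p_i\in E_i\) and \(q_i \in E_{i}\) (with \(q_i\sim p_{i+1}\)). On \(E_i\) the ramification of a limit \(g^r_d\) is governed by whether \(p_i-q_i\) is a torsion point of small order \(m_i\). For a Brill--Noether general chain all \(p_i-q_i\) are non-torsion, and each elliptic component can "drop" only one index of the vanishing sequence. Pflueger's displacement/Demazure-product formalism then shows that the chain carries a refined limit \(g^r_d\) exactly when the \((r+1)\times(g-d+r)\) rectangle can be filled appropriately, with each torsion condition \(p_i-q_i\in E_i[m_i]\) contributing extra moves. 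The goal is to choose exactly \(k\) components where we impose a torsion condition. Each condition is one condition on moduli of the chain, since the locus of chains is \(g\)-dimensional in the \(\Delta\)-boundary stratum and a torsion condition cuts \(1\). The choice must be such that a refined limit \(g^r_d\) exists, and such that the space of limit linear series on this special chain has the expected dimension \(\rho + k = 0\) (or at least is finite at one chosen point).

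The combinatorial step is to exhibit such a filling. Using the hypothesis \(r+1,\,g-d+r\ge 2\), I would fill the rectangle in a skew/staircase fashion where \(k\) of the \(g\) boxes-steps are "doubled" by a torsion component with \(m_i\) chosen as the appropriate difference of labels. The bound \(k\le g-3\) is what leaves enough generic components to make this possible. I would then check that the resulting set of tableaux is nonempty, and that for at least one of them the limit series is isolated.

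Next comes the smoothing and dimension argument. Eisenbud--Harris limit linear series theory (refined, so dimensionally proper) gives that the space of limit \(g^r_d\)'s over the family of chains has every component of dimension at least \(3g-3+\rho\) in the ambient versal family. Near our point the fiber dimension is \(0\) and the chain locus with the torsion conditions has codimension exactly \(k\) in the relevant boundary stratum. Combining these, some component of \(\bar{\mathcal{G}}^r_d\) passes through our point, dominates a locus in \(\bar{\M}_g\) of dimension at most \(3g-3-k\), and meets the interior. Its image in \(\M_g\) then gives a component of \(\M^r_{g,d}\) of dimension \(\le 3g-3+\rho\), and with Steffen this is equality.

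The main obstacle is the local upper bound: showing that the locus of chains admitting a limit \(g^r_d\) near our point is no bigger than the \(k\) torsion conditions predict. Deformations that move the points in other ways, or that smooth some nodes, must not preserve a \(g^r_d\). I would attack this by proving that at our chosen point the limit linear series is unique and "refined", and then computing the tangent space to the limit-linear-series scheme over the versal deformation of the chain. The hope is that each torsion condition is transverse (via the torsion-order computation on each \(E_i\)), so that the locus in the deformation space is cut out with codimension exactly \(k\).
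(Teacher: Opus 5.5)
The survey gives no proof of this theorem beyond citing Pflueger. Your overall route is the right one: elliptic chains with torsion conditions on $k=-\rho$ interior components, displacement tableaux, and the Eisenbud--Harris bound that every component of the relative limit-linear-series scheme $G\to B$ over the versal deformation $B$ of $X_0$ has dimension at least $\dim B+\rho=3g-3-k$.

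The gap is that the step you call ``the main obstacle'' is the entire content of the argument, and your proposed attack does not supply it. Uniqueness or refinedness of the limit series on the single chain $X_0$ says nothing about nearby chains with \emph{other} torsion. A tangent-space computation for the Eisenbud--Harris scheme at a boundary point would need explicit local equations in the node-smoothing parameters, which that construction does not give. ``Transversality of the torsion conditions'' only addresses directions inside the chain stratum.

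The framing is also off in two places. First, node-smoothing deformations must \emph{preserve} the $g^r_d$; that is how a component of $\M^r_{g,d}$ arises at all. Second, the stratum of elliptic chains has dimension $2g-2$ (codimension $g-1$ in $\bar{\M}_g$), not $g$, and with your count the bound $3g-3-k$ does not come out. Note too that ``dimensionally proper'' cannot hold when $\rho<0$; only the lower bound is available.

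What closes the gap is a \emph{uniform} upper bound. For every elliptic chain $X'$, the space of (crude) limit $g^r_d$'s has dimension at most $\max_t\,(g-\#\{\text{symbols used by } t\})$, the maximum taken over displacement tableaux $t$ on the $(r+1)\times(g-d+r)$ rectangle for the torsion profile of $X'$. Take $X_0$ general subject to its torsion conditions. Then any irreducible locus of chains through $X_0$ has generic torsion supported on some $T'\subseteq T$, with the same orders $m_i$.

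So choose the $m_i$ so that each such tableau uses at least $(r+1)(g-d+r)-|T'|$ symbols; for instance, arrange that no symbol can occupy three boxes. Then the pairs (chain, limit series) over the chain stratum $B_0$ near your point form a locus of dimension at most $(2g-2-|T'|)+(\rho+|T'|)=2g-2-k$, and the fiber over $X_0$ is finite.

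$B_0$ is cut out in $B$ by the $g-1$ smoothing parameters. Hence every component $G'$ of $G$ through a point over $X_0$ satisfies $3g-3-k\le\dim G'\le(2g-2-k)+(g-1)$. Suppose $G'$ lay over the locus where a nonempty set $S$ of nodes persists. The same count would then give dimension at least $2g-2-k+|S|$ over $B_0$, a contradiction. So $G'$ meets the interior. Using properness of $G\to B$ to handle any component of $\M^r_{g,d}$ whose closure contains $X_0$, you get a component of dimension exactly $3g-3-k$, with no tangent-space computation.

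Finally, you must actually construct a tableau compatible with such $m_i$ that uses all $g$ symbols with exactly $k$ of them repeated. This is where $r+1,\ g-d+r\ge 2$ and $k\le g-3$ enter: no symbol repeats within a row or column, and symbols $1$ and $g$ never repeat. Your ``staircase filling'' is not yet an argument.
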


This result was subsequently independently proved by Teixidor i Bigas in the slightly smaller range where \(-g +(r+1) \leq \rho(g,r,d) <0\) \cite[Theorem 2.1]{teix25}.  In \cite[Theorem B]{Pf_neg}, Pflueger proves a similar result subject to the asymptotic \(-(3 + c(r))g + O(g^{5/6}) \leq \rho(g, r, d) < 0\), where \(c(r)\) is an explicit constant going to \(0\) as \(r \to \infty\).  In the regime where \(r\) is large, this result is substantially stronger than Theorem~\ref{thm:Pf_neg}, considering that the smallest value of \(\rho(g, r, d)\) where such a theorem could hold is \(-3g+3\).

We now turn to the question of understanding the Brill--Noether Question~\ref{ques:BN} for curves \(C \in \M^r_{g,d}\) when \(\rho(g, r, d) < 0\) and so the classic Brill--Noether Theorem~\ref{thm:BN} can fail.

\begin{question}\label{ques:spec}
Consider a curve \(C\) that is general in some component of \(\M^{r_0}_{g,d_0}\).  What can we say about \(W^{r}_{d}C\) for \((r, d)\neq (r_0, d_0)\)?
\end{question}

The first thing we could hope to say about \(W^{r}_{d}C\) is when it is (non)empty.  We will say that a \(g^{r}_{d}\) on a curve \(C\) is \defi{\(\rho\)-unexpected} if \(\rho(g, r, d) <0\) and \defi{\(\rho\)-expected} otherwise.  The Brill--Noether Theorem~\ref{thm:BN}\eqref{eq:BN_dim} guarantees that when \(C\) is general in \(\M_g\), there are no \(\rho\)-unexpected \(g^{r}_{d}\)'s on \(C\).
This leads to the following natural subquestion: 

\begin{question}\label{ques:max}
For which \((r_0, d_0)\) does a general curve in some component of \(\M^{r_0}_{g,d_0}\) have no \(\rho\)-unexpected \(g^{r}_{d}\) for \((r, d)\not\in \{ (r_0, d_0), (g - d_0 + r_0 -1, 2g-2-d_0)\}\)?  
\end{question}

If \((r_0, d_0)\) appears in the answer to Question~\ref{ques:max}, then \(\M^{r_0}_{g,d_0}\not\subset \M^{r}_{g, d}\) for every \((r, d)\not\in \{ (r_0, d_0), (g - d_0 + r_0 -1, 2g-2-d_0)\}\) for which \(\rho(g, r, d) < 0\).  The converse is not necessarily true, since it could be that curves from different components of \(\M^{r_0}_{g,d_0}\) have disjoint \(\rho\)-unexpected linear series.  Nevertheless, the \((r_0, d_0)\) appearing in the answer to Question~\ref{ques:max} give rise to (proper) Brill--Noether loci that are maximal with respect to containment.
There are some trivial containments of Brill--Noether loci:
\begin{flalign}
\label{add} \text{(Adding a point)} \ & \text{\(\M^r_{g, d} \subset \M^r_{g, d+1}\),}& \\
\label{subtract} \text{(Subtracting a general point)} \ & \text{\(\M^r_{g, d} \subset \M^{r-1}_{g, d-1}\),}&
\end{flalign}
which give rise to trivial instances of \(\rho\)-unexpected linear series whenever \(\rho(g, r_0, d_0+1) <0\) and \(\rho(g, r_0-1, d_0-1)<0\) respectively.

A Brill--Noether locus is called \defi{expected maximal} if it is maximal with respect to these trivial containments, equivalently if adding a point \eqref{add} and subtracting a general point \eqref{subtract} do not give rise, trivially, to \(\rho\)-unexpected \(g^{r}_{d}\)'s. 
Explicitly, \(\M^{r_0}_{g,d_0}\) is expected maximal if \(\rho(g, r_0, d_0) < 0\) and \(\rho(g, r_0, d_0+1) \geq 0\) and \(\rho(g, r_0-1, d_0-1) \geq 0\). 
From the fact that \(\rho(g, r_0, d_0) <0\) but \(\rho(g, r_0, d_0+1) \geq 0\) it follows that \(d_0 = \lceil(r_0g)/(r_0+1)\rceil +r_0 -1\) and \(\rho(g, r_0, d_0) \geq -r_0-1\), which is at least \(-\sqrt{g}\) by \cite[Lemma 1.1]{AHL}.  It is an elementary calculation that there also exists an expected maximal Brill--Noether locus with \(-\rho(g, r_0, d_0)\) on the order of \(g^{1/2 - \epsilon}\).  These expected maximal Brill--Noether loci therefore occur in moderately large expected codimension, where we know very little about the geometry of \(\M^{r_0}_{g,d_0}\), other than that there is \textit{some} component occurring in the expected codimension by Theorem~\ref{thm:Pf_neg}.

If \(\M^{r_0}_{g,d_0}\) is not expected maximal, then it is not maximal and \((r_0, d_0)\) does not appear in the answer to Question~\ref{ques:max}.
Inspired by work of Farkas \cite{farkas_g23} and Lelli-Chiesa \cite{lc_lm}, Auel and Haburcak conjectured in \cite[Conjecture 1]{AH}  that expected maximal Brill--Noether loci are actually maximal with respect to containment when \(g \geq 3\) and  \(g\neq 7, 8, 9\) (where counterexamples \cite[Section 6.2]{AH} were known; we explain one of them in Example~\ref{ex:814} using Hurwitz--Brill--Noether theory).  Subsequent partial progress by several authors \cite{AHL, B, BH, teix25} culminated in the following complete result, which both proves the Maximal Brill--Noether loci conjecture and answers the a priori\footnote{A posteriori, the \((r_0, d_0)\) appearing in the answer to Question~\ref{ques:max} correspond exactly to the maximal \(\M^{r_0}_{g, d_0}\).} stronger Question~\ref{ques:max}.

\begin{thm}[{Auel--Haburcak--Knutsen \cite[Theorem 1]{AHK}}]\label{thm:max}
 Let \(g \geq 3\) and \(2 \leq d \leq g-1\).  There exists a component of \(\M^{r_0}_{g,d_0}\) for which the general curve \(C\) in that component has no \(\rho\)-unexpected linear series \(g^{r}_{d}\) for \((r, d)\not\in \{ (r_0, d_0), (g - d_0 + r_0 -1, 2g-2-d_0)\}\) if and only if \((g, r_0, d_0)\) is expected maximal and \((g, r_0, d_0) \nin \{(7, 2, 6),(8, 1, 4),(9, 2, 7)\}\).
\end{thm}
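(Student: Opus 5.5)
The statement has two directions; I would treat the easy one first.

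\emph{Necessity.} If \((g,r_0,d_0)\) is not expected maximal, then either \(\rho(g,r_0,d_0+1)<0\) or \(\rho(g,r_0-1,d_0-1)<0\). In the first case, adding a point as in \eqref{add} produces a \(g^{r_0}_{d_0+1}\), which is \(\rho\)-unexpected on every curve of \(\M^{r_0}_{g,d_0}\). In the second case, subtracting a general point as in \eqref{subtract} produces a \(g^{r_0-1}_{d_0-1}\), again \(\rho\)-unexpected. One must check that these new pairs \((r,d)\) differ from both \((r_0,d_0)\) and its Serre dual \((g-d_0+r_0-1,\,2g-2-d_0)\); this is a short degree and rank comparison using \(d_0\le g-1\).

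For the three exceptional triples \((7,2,6)\), \((8,1,4)\), \((9,2,7)\), I would exhibit an extra unexpected linear series on every component. For example, a tetragonal curve of genus \(8\) also carries an unexpected \(g^2_7\) or \(g^1_4\)-related series. This can be detected by the Hurwitz--Brill--Noether theory of Section~\ref{sec:hbn}, as in Example~\ref{ex:814}. The plane-curve cases \((7,2,6)\) and \((9,2,7)\) are handled by explicit projection/adjoint constructions from the singular plane model, which produce an unexpected pencil or net.

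\emph{Sufficiency.} This is the substantive direction. Fix an expected maximal triple other than the three exceptions. I would construct curves \(C\) lying on a K3 surface \(S\) with \(\Pic S=\zz H\oplus \zz L\), where \(H=[C]\), \(H^2=2g-2\), \(L^2=2r_0-2\) and \(H\cdot L=d_0\), following Lazarsfeld \cite{laz_bn} and the lattice-theoretic approach of Knutsen and Lelli-Chiesa \cite{lc_lm}. Then \(L|_C\) is a \(g^{r_0}_{d_0}\). I would show such \(C\) fill out a component of \(\M^{r_0}_{g,d_0}\), using dimension counts and Theorem~\ref{thm:Pf_neg} or Steffen's bound for the codimension.

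The key step is to rule out every other \(\rho\)-unexpected \(g^r_d\) on a general such \(C\). Given \(A\) with \(h^0(A)=r+1\), I would take the Lazarsfeld--Mukai bundle \(E_{C,A}\) on \(S\). If \(E_{C,A}\) is simple, the standard Lazarsfeld argument gives \(\rho(g,r,d)\ge 0\). Otherwise \(E_{C,A}\) is destabilized by a line bundle \(M\in\Pic S\), which Donagi--Morrison/Lelli-Chiesa type results identify with a decomposition \(H=M+N\) in which \(M|_C\) contributes \(A\) up to adding base points. Hence any unexpected \(g^r_d\) on \(C\) is governed by classes \(aH+bL\). Checking that such a class gives \(\rho<0\) only for \(L|_C\) and its residual \(K_C-L|_C\) amounts to a lattice inequality: show that for all effective decompositions with \(h^0\ge 2\), the induced Clifford-type bound forces \((r,d)\) into one of these two values, given \(\rho(g,r_0,d_0+1)\ge0\) and \(\rho(g,r_0-1,d_0-1)\ge0\).

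The main obstacle is this last step: a uniform, case-free verification of the lattice inequality across all expected maximal triples, together with controlling non-simple Lazarsfeld--Mukai bundles of higher rank, where the destabilizing filtration can have several steps. I would first try to reduce to rank-two filtrations by an induction on \(r\). I would then handle small \(g\) (say \(g\le 12\)) by direct enumeration, which is also where the three exceptions surface.
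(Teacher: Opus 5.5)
The survey gives no proof of this theorem. It quotes it from \cite{AHK}, remarking only that the proof rests on Lazarsfeld's K3 techniques and that the \((8,1,4)\) exception is explained by Example~\ref{ex:814}. Your proposal therefore has to stand on its own.

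Your necessity direction is fine in outline:
\begin{itemize}
\item \(2g-2-d_0\) has the wrong parity to equal \(d_0\pm1\), so the trivially unexpected pairs are never the excluded ones.
\item For \((8,1,4)\) the extra series is the \(g^2_7\) of Example~\ref{ex:814}. The \(g^1_4\) is the defining series, not an extra one.
\item For \((7,2,6)\) and \((9,2,7)\), a birational plane model of degree \(6\), resp.\ \(7\), has arithmetic genus \(10>7\), resp.\ \(15>9\). Projecting from a singular point therefore gives a \(g^1_4\), resp.\ \(g^1_5\), with \(\rho=-1\). A non-birational net, or one with base points, gives a pencil of no larger degree.
\end{itemize}

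In the sufficiency direction, however, the step ``such \(C\) fill out a component of \(\M^{r_0}_{g,d_0}\)'' fails in general. K3 surfaces with a rank-\(2\) lattice polarization have \(18\) moduli, so curves in \(|H|\) sweep out a locus of dimension at most \(g+18\). By Steffen \cite{steffen}, every component of \(\M^{r_0}_{g,d_0}\) has dimension at least \(3g-3+\rho(g,r_0,d_0)\geq 3g-3-\sqrt{g}\), which is larger as soon as \(g\geq 13\). The step is also unnecessary. Each \(\M^r_{g,d}\) is closed in \(\M_g\), and only finitely many \((r,d)\) have \(\rho(g,r,d)<0\). So a single curve \(C\in\M^{r_0}_{g,d_0}\) lying in none of the other such loci forces the general point of any component through \([C]\) to avoid them too. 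This closedness argument is the standard way K3 curves are used for non-containment statements, and it should replace your dimension count.

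The genuine gap is the step you yourself flag as the main obstacle, and it is the whole substance of the sufficiency direction.
\begin{itemize}
\item Non-simplicity of \(E_{C,A}\) yields, via a nonscalar endomorphism, only a decomposition \(H=M+N\) into classes with at least two sections each.
\item It does not show that \(|A|\) is contained, up to base points, in the complete linear series of \(M|_C\).
\item For \(r+1\geq 3\) the relevant subsheaves need not be line bundles, so ``destabilized by a line bundle \(M\)'' is not justified.
\end{itemize}
The containment you invoke is a Donagi--Morrison type statement. It is established only in low rank and under additional hypotheses (cf.\ \cite{lc_lm}), and ``reduce to rank-two filtrations by induction on \(r\)'' does not supply it.

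The lattice analysis is also not carried out. For the lattice \(\zz H\oplus\zz L\) with \(H^2=2g-2\), \(H\cdot L=d_0\), \(L^2=2r_0-2\), you must show:
\begin{itemize}
\item it is realized on a K3 surface with \(H\) ample and \(h^0(C,L|_C)=r_0+1\);
\item no other classes in it (for instance elliptic or \((-2)\)-classes of small \(H\)-degree) produce further series;
\item uniformly over all expected maximal triples outside the three exceptions, no \(\rho\)-unexpected \(g^r_d\) exists on a general \(C\in|H|\) for \((r,d)\) other than \((r_0,d_0)\) and its Serre dual.
\end{itemize}
Without the containment statement and this uniform lattice verification, your sufficiency direction is a program rather than a proof.
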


This theorem guarantees that the answer to Question~\ref{ques:max} comes purely from ruling out trivially \(\rho\)-unexpected \(g^{r}_{d}\) from \eqref{add} and \eqref{subtract} once \(g \geq 10\).  This doesn't mean that the trivially \(\rho\)-unexpected \(g^{r}_{d}\)'s are the \textit{only} \(\rho\)-unexpected \(g^{r}_{d}\) in general!
Haburcak has completely classified all containments between (proper) Brill--Noether loci in genera up to \(12\) \cite[Theorem A]{H_g12}, and there are many subtle and interesting phenomena.

Theorem~\ref{thm:max} completely answer Question~\ref{ques:max}.  
We turn now to what is known more generally about Question~\ref{ques:spec}.  The most complete results are known in the case \(r_0=1\), which received substantial work in the last decade. Since \(\M^1_{g,k}\) is the image of the Hurwitz space \(\H_{k,g}\), this subject goes by the name of \defi{Hurwitz--Brill--Noether theory}; we survey the main results in the next Section~\ref{sec:hbn}.  Some scattered other results are known: \cite{LV_hir} studies the case \(r_0=2\) and \(g = {d_0-1 \choose 2}\) of smooth plane curves (and smooth curves on Hirzebruch surfaces more generally).  Their \cite[Theorem 1.1]{LV_hir} computes the dimension of \(W^r_dC\) for a smooth plane curve \(C\), and gives some smoothness results in characteristic \(0\), but does not address other aspects of the geometry.  The gonality of a plane curve of degree \(d\) with not too many nodes and cusps is computed to be \(d-2\) in \cite[Theorem 2.1]{CK_gon}.

\subsection{Hurwitz--Brill--Noether Theory}\label{sec:hbn}

In this section we study Question~\ref{ques:spec} when \(r_0=1\).  That is, let \(C\) be a genus \(g\) curve equipped with a map \(f\colon C \to \pp^1\) of degree \(k\).  Many authors observed that in this setting \(W^r_dC\) can be reducible of the ``wrong" dimension with unexpected singularities \cite{B89,CKM, BK, M96, CM99, CM00, CM02,  park}.  The first interesting example is the following.

\begin{example}[Trigonal genus \(5\) curve]\label{ex:trigonal_baby}
Let \(f \colon C \to \pp^1\) be a general degree \(3\) genus \(5\) cover and let \(H \colonequals f^*\O_{\pp^1}(1)\).  Then \(W^1_4C\) has two irreducible components \(X_1\) and \(X_2\), both isomorphic to \(C\):
\begin{center}
\begin{tikzpicture}
\draw (0,0) .. controls (1,1.5) and (3,1.5) .. (4,0);
\draw (0,1) .. controls (1,-.5) and (3,-.5) .. (4,1); 
\draw (0,0) node[left] {\(X_1 = \{H(p) : p \in C\}\)};
\draw (4,1) node[right] {\(\{K_C\otimes H^{-1}(-q) : q \in C\} = X_2 \)};
\filldraw (.43,.5) circle[radius=0.03];
\filldraw (4-.43,.5) circle[radius=0.03];
\draw (.43,.5) node[left]{\tiny \(H(p_1)\)};
\draw (4-.43,.5) node[right]{\tiny \(H(p_2)\)};
\end{tikzpicture}
\end{center}
where \(p_1 + p_2\) is the unique effective representative of \(K_C\otimes H^{\otimes -2}\).
While \(W^1_4C\) has the expected dimension, it is not irreducible, and it is singular even though \(W^2_4C = \emptyset\).  Furthermore, every line bundle of the form \(H(p)\) has a basepoint, even though \(r=1\).
\end{example}

H.~Larson \cite{l_rbn} and Cook-Powell--Jensen \cite{cpj1, cpj2} independently suggested that this behavior might be explained by the isomorphism class of the vector bundle \(f_*L\).  Since every vector bundle on \(\pp^1\) splits as a direct sum of line bundles, this isomorphism class is explicitly a \(k\)-tuple of integers \(\vec{e} \colonequals (e_1, \dots, e_k)\), which we assume is ordered so \(e_1 \leq e_2 \leq \cdots \leq e_k\), termed the \defi{splitting type} of the line bundle \(L\) on \(C\).  
Returning to Example~\ref{ex:trigonal_baby}, a line bundle \(L\in \Pic^4_C\) is in \(X_1\) if and only if \(h^0(C, L(-H)) = h^0(\pp^1, f_*L(-1)) \geq 0\).  Thus \(X_1\) is the locus of line bundles whose largest component \(e_3\) of the splitting type is at least \(1\).
The splitting type \(\vec{e}\) gives a refinement of the pair \((r, d)\), using
\begin{align}\label{eq:rd}
d &= \chi(L) +g - 1 = \chi(f_*L) +g - 1=  k + \sum e_i +g - 1,\\
r &= \dim H^0(C, L) - 1 = \dim H^0(C, f_*L)-1 = \sum_i \max(0,e_i-1) -1.
\end{align}
The locus \(W^r_dC\) is therefore refined by the splitting loci:
\[W^{\vec{e}} C \colonequals \{[L] \in \Pic_C : f_*L \simeq \O(e_1)\oplus \dots\oplus \O(e_k) \text{ or a specialization thereof}\}.\]
Specialization is given by majorization: \(W^{\vec{e}'}C \subset W^{\vec{e}}C\) if \(e'_1 + \cdots + e'_j \leq e_1 + \cdots + e_j\) for all \(j\).
In this language, in Example~\ref{ex:trigonal_baby}, \(X_1 = W^{(-2, -2, 1)}C\), \(X_2 = W^{(-3, 0, 0)}C\) and their two points of intersection are \(W^{(-3, -1, 1)}C\).

The expected dimension of \(W^{\vec{e}}C\) can be computed from deformation theory to be
\[\rho(g, \vec{e}) \colonequals g - h^1(\pp^1, \End \O(\vec{e})) = g - \sum_{i > j} \max(0, e_i - e_j - 1).\]

\begin{thm}[The Hurwitz--Brill--Noether Theorem]\label{thm:hbn}
Fix \(g, k, \vec{e}\).
Let \(f\colon C \to \pp^1\) be a general degree \(k\) genus \(g\) cover (i.e., \([f \colon C \to \pp^1]\) is in a dense open in the Hurwitz space \(\H_{k,g}\)).
\begin{enumerate}
\item (H.~Larson \cite[Theorem 1.2]{l_rbn}, Cook-Powell--Jensen \cite[Theorem 1.1]{cpj1, cpj2}) \(W^{\vec{e}}C\) has dimension \(\min(g, \rho(g, \vec{e}))\).  In particular, \(W^{\vec{e}}C = \emptyset\) if \(\rho(g, \vec{e}) < 0\).\label{eq:hbn_dim}
\item (H.~Larson \cite[Theorem 1.2]{l_rbn}, E.~Larson--H.~Larson--V. \cite[Theorem 1.2(2')]{llv}) \(W^{\vec{e}}C\) is normal, Cohen--Macaulay, and smooth away from all \(W^{\vec{e}'}C\) for \(\vec{e}'\) a specialization of \(\vec{e}\) of codimension at least \(2\).
\item (E.~Larson--H.~Larson--V. \cite[Theorem 1.4 and Section 5.2]{llv}) When \(\rho(g, \vec{e}) = 0\),  \(\#W^{\vec{e}}C\) is the number of \(k\)-fillings of a certain \(k\)-core \(\Gamma(\vec{e})\) with symbols \(\{1, \dots, g\}\).\label{eq:hbn_enum}
\item (E.~Larson--H.~Larson--V. \cite[Theorem 1.2(4')-(5')]{llv}) When \(\rho(g, \vec{e}) > 0\), the locus \(W^{\vec{e}}C\) is irreducible.  When \(\rho(g, \vec{e}) = 0\), if the characteristic does not divide \(k\), there exists a unique component of the universal \(\W^{\vec{e}}\) dominating  \(\H_{k,g}\).
\end{enumerate}
\end{thm}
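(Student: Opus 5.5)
The plan is to imitate the degenerative proof of the classical Brill--Noether theorem. We replace flag curves by chains of elliptic curves so that the argument works in every characteristic. We also replace linear series by line bundles together with their pushforward splitting type along a map to \(\pp^1\). Concretely, we degenerate a general degree \(k\) cover \(f\colon C\to\pp^1\) to a chain \(E_1\cup\cdots\cup E_g\) of elliptic curves. Each \(E_i\) is glued at points \(p_i,q_i\) with \(p_i-q_i\) torsion of order exactly \(k\). The chain admits an admissible degree \(k\) cover of a chain of \(\pp^1\)'s, given on \(E_i\) by the \(k\)-torsion condition. This chain is the natural limit of a general point of \(\H_{k,g}\), so it suffices to bound splitting loci on the limit and then use upper semicontinuity.

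The first step is to describe limits of line bundles with splitting type at most \(\vec e\), meaning \(f_*L\) specializes to \(\O(\vec e)\). On each component this becomes a condition on the vanishing sequences at \(p_i,q_i\) of the twists \(L(mH)\), for all \(m\). Equivalently it is a combinatorial datum: a sequence of partitions whose shape is constrained by the \(k\)-torsion condition. The condition forces each \(E_i\) to change at most one ``\(k\)-abacus'' bead, or to add a box to the \(k\)-core \(\Gamma(\vec e)\).

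The second step is a dimension count on this combinatorial model. Each elliptic component either contributes one free parameter (a choice in \(\Pic^0\)) or imposes a condition. The number of conditions needed to reach splitting type \(\vec e\) is at least \(\sum_{i>j}\max(0,e_i-e_j-1)\), so the limit locus has dimension \(\rho(g,\vec e)\). This gives emptiness when \(\rho<0\), and the dimension bound in part (1). Existence and the lower bound follow from the determinantal description of splitting loci: the dimension at every point is at least \(\rho\). Part (3) follows by counting the chains with \(\rho=0\); these correspond bijectively to \(k\)-fillings of \(\Gamma(\vec e)\).

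For part (2), we would show that the splitting locus has the expected local structure. It should be the pullback, via a smooth map, of the universal splitting locus in the moduli stack of rank \(k\) bundles on \(\pp^1\). That locus is known to be normal and Cohen--Macaulay, with singular locus as stated. Smoothness of the comparison map is a Petri-type injectivity statement for \(f\). We would prove it by the same chain degeneration, showing that the relevant refined Petri map has no kernel on limits.

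Part (4) we expect to be the main obstacle. Irreducibility for \(\rho>0\) should follow from a connectedness argument together with the normality in (2). Connectedness can be obtained from ampleness of the appropriate degeneracy class, in the style of Fulton--Lazarsfeld. The \(\rho=0\) monodromy statement requires showing that the monodromy acts transitively on fillings. For this we would use induction along the chain, varying the torsion points and the attaching points, which requires that the characteristic not divide \(k\).
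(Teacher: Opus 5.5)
Your overall architecture matches the strategy of the cited works: degenerating to an elliptic chain with \(p_i-q_i\) of exact order \(k\), recording limits via \(k\)-cores and the affine symmetric group, and deducing (2) from smoothness of \(L\mapsto f_*L\) into the stack of rank \(k\) bundles on \(\pp^1\). However, two steps do not work as written. The first is existence. The determinantal description only says that every component of \(W^{\vec e}C\), \emph{if there is one}, has dimension at least \(\rho(g,\vec e)\). It cannot show \(W^{\vec e}C\neq\emptyset\) when \(\rho(g,\vec e)\geq 0\). You need a separate input. One option is a class computation showing that \([\overline{W}^{\vec e}C]\) is a nonzero multiple of \(\theta^{g-\rho(g,\vec e)}\). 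Another is a regeneration statement: limits on the chain realizing a word of length \(g-\rho(g,\vec e)\) form a locus of exactly the expected dimension, inside a relative splitting locus over the smoothing all of whose components have at least the expected dimension, so they deform to the general fiber. Part (3) needs the same input plus a multiplicity-one check. Counting limits on the chain computes \(\#W^{\vec e}C\) only once you know each limit is a reduced point of the limit scheme and actually lifts. Your combinatorial model is also off as stated. A single component can add \emph{all} addable boxes of one residue at once, because \(k(p_i-q_i)\sim 0\) makes the conditions \(L^i\simeq\O_{E_i}(ap_i+(d-a)q_i)\) coincide for all \(a\) in a fixed class mod \(k\). This is why symbols repeat in a \(k\)-filling, and why the bound is \(g-\rho(g,\vec e)\) rather than the number of boxes of \(\Gamma(\vec e)\).

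The second gap is the connectedness step in (4). Fulton--Lazarsfeld applies to the degeneracy locus of a single map \(E\to F\) with \(E^\vee\otimes F\) ample, such as the Brill--Noether locus \(\{L : h^0(L\otimes f^*\O(m))\geq n\}\) of one twist. But \(\overline{W}^{\vec e}C\) is the intersection of these loci over all \(m\), with \(n=h^0(\O(\vec e)(m))\). For general \(\vec e\) several of these conditions are genuinely needed, and connectedness does not pass to intersections. Moreover, a single-twist locus, where Fulton--Lazarsfeld does apply, is typically a union of several splitting loci. In Example~\ref{ex:trigonal_baby} it shows that \(W^1_4C=W^{(-2,-2,1)}C\cup W^{(-3,0,0)}C\) is connected, which says nothing about either component. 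So ``connected and normal \(\Rightarrow\) irreducible'' is missing its first hypothesis. Connectedness of each \(W^{\vec e}C\) has to come from elsewhere, for instance from the degeneration itself: connectedness of the locus of limits on the chain, together with normality of the relative splitting locus along it. Finally, when the characteristic divides \(k\), the nodes of your chain are wildly ramified. Its smoothability into \(\H_{k,g}\) is then not covered by the usual (tame) admissible-cover theory, and needs its own argument if you want the characteristic-free statements.
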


\begin{rem}
You will notice that Theorem~\ref{thm:hbn} parallels \textit{almost} all of the main results from the classic Brill--Noether Theorem.  For \defi{tame} splitting types, which are concatenations of two balanced splitting types, a recent theorem of Lin proves that \(W^{\vec{e}}C\) has rational singularities in characteristic \(0\) \cite[Theorem 1]{Lin}.  
When \(\rho(g, \vec{e}) = 0\), the monodromy of the cover \(\W^{\vec{e}}\to \H_{k,g}\) is studied in forthcoming work of the author and Gonz\'alez, H.~Larson, and Pechenik \cite{mon}.
\end{rem}

Part~\eqref{eq:hbn_enum} hints at the deep combinatorial structure of this problem.  A \(k\)-core is a certain type of Young diagram whose boundary satisfies a \(k\)-discrete convexity property, see \cite{lm} and \cite[Section 5.1]{llv} for precise definitions and properties.  
In a \(k\)-filling of a \(k\)-core, a symbol can be repeated in boxes that are lattice distance a multiple of \(k\) apart.
Cores are well-studied in combinatorics for their relationship with the affine symmetric group \(\tilde{S}_k\), the infinite Coxeter group generated by transpositions \(s_0, \dots, s_{k-1}\) such that \(s_is_j = s_js_i\) if \(j \not\equiv i \pm 1 \pmod{k}\) and \(s_i s_{i+1}s_i = s_{i+1}s_is_{i+1}\).  In particular, \(k\)-filings are in bijections with reduced words in the affine symmetric group , see \cite[Section 8]{lm}.

\begin{example} The two \(3\)-fillings of the \(3\)-core \(\Gamma((-3, -1, 1))\) with symbols \(\{1, \dots, 5\}\).  

\begin{minipage}{.48\textwidth}
\begin{center}
\begin{tikzpicture}[scale=.5]

\draw (0,0) -- (4,0);
\draw (0, -1) -- (4,-1);
\draw (0, -2) -- (2, -2);
\draw (0, -3) -- (1, -3);
\draw (0, -4) -- (1, -4);

\draw (0,0) -- (0, -4);
\draw (1,0) -- (1, -4);
\draw (2, 0) -- (2, -2);
\draw (3, 0)--(3, -1);
\draw (4,0)--(4, -1);

\node at (.5, -.5) {\(1\)};
\node at (1.5, -.5) {\(2\)};
\node at (.5, -1.5) {\(3\)};
\node at (2.5, -.5) {\(3\)};
\node at (.5, -2.5) {\(4\)};
\node at (.5, -3.5) {\(5\)};
\node at (1.5, -1.5) {\(5\)};
\node at (3.5, -.5) {\(5\)};

\node at (5, -2) {\(\leftrightarrow\)};
\node at (8, -2) {\(s_0s_1s_2s_1s_0\)};
\end{tikzpicture}
\end{center}
\end{minipage}
\begin{minipage}{.48\textwidth}
\begin{center}
\begin{tikzpicture}[scale=.5]

\draw (0,0) -- (4,0);
\draw (0, -1) -- (4,-1);
\draw (0, -2) -- (2, -2);
\draw (0, -3) -- (1, -3);
\draw (0, -4) -- (1, -4);

\draw (0,0) -- (0, -4);
\draw (1,0) -- (1, -4);
\draw (2, 0) -- (2, -2);
\draw (3, 0)--(3, -1);
\draw (4,0)--(4, -1);

\node at (.5, -.5) {\(1\)};
\node at (1.5, -.5) {\(3\)};
\node at (.5, -1.5) {\(2\)};
\node at (2.5, -.5) {\(4\)};
\node at (.5, -2.5) {\(3\)};
\node at (.5, -3.5) {\(5\)};
\node at (1.5, -1.5) {\(5\)};
\node at (3.5, -.5) {\(5\)};

\node at (5, -2) {\(\leftrightarrow\)};
\node at (8, -2) {\(s_0s_2s_1s_2s_0\)};
\end{tikzpicture}\\
\end{center}
\end{minipage}

\noindent
This agrees with \(\#W^{(-3, -1, 1)}C = 2\) observed in Example \ref{ex:trigonal_baby}.\hfill\(\righthalfcup\)
\end{example}

Going back to Question~\ref{ques:spec}, the Hurwitz--Brill--Noether Theorem~\ref{thm:hbn} can be used to understand the geometry of \(W^r_dC\) when \(C\) is a general curve in \(\M^1_{g,k}\).
The components of \(W^r_dC\) are splitting loci \(W^{\vec{e}}\) for \(\vec{e}\) satisfying~\eqref{eq:rd} that are maximal with respect to containment.  Such splitting types are always tame and are explicitly given in \cite[Lemma 2.2]{l_rbn}; if \(b(x, y)\) denotes the unique balanced splitting type of length \(x\) and sum \(y\), then the maximal splitting types (when \(g-d + r >0 \)) are:
\[\vec{w}_{r, \ell} \colonequals b(k - r- 1 +\ell, d - g +1-k-\ell)  \sqcup b(r+1-\ell, \ell),\]
for \(\max(0, r+2-k) \leq \ell \leq r\) and \(\ell =0\) or \(\ell \leq g -d + 2r+1-k\).  
This yields the following corollary (we have omitted statements that are not substantially simpler than the Hurwtiz--Brill--Noether Theorem, and we give original references).

\begin{cor}\label{cor:hbn_wrd}
Let \(C \in \M^1_{g,k}\) be a general curve.
\begin{enumerate}
\item\label{hbn_dim} (Pflueger \cite[Theorem 1.1]{Pf} and Jensen--Ranganathan \cite[Theorem A]{jr}) \(W^r_d C\) has dimension 
\(\min(g, \rho_k(g, r, d))\), where
\[\rho_k(g, r, d) \colonequals  \max_{0 \leq \ell \leq \min(r, g-d+r-1)} \rho(g, r- \ell, d) - \ell k.\]
In particular, if \(\rho_k(g, r, d) < 0\), then \(W^r_dC = \emptyset\). 
\item (Lin \cite[Corollary 1]{Lin}) When the characteristic is \(0\), each component of \(W^r_d C\) has rational singularities.
\end{enumerate}
\end{cor}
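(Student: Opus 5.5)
Both parts come from Theorem~\ref{thm:hbn} through the decomposition of \(W^r_dC\) into splitting loci. For \(f\colon C\to\pp^1\) general in \(\H_{k,g}\), whose image is a general curve of \(\M^1_{g,k}\), the relations \eqref{eq:rd} give
\[W^r_dC=\bigcup_{\vec e} W^{\vec e}C,\]
where \(\vec e\) runs over splitting types with \(\sum e_i = d-g+1-k\) and \(\sum_i\max(0,e_i-1)\geq r+1\). Since the \(W^{\vec e}C\) are closed under specialization, it suffices to take the union over types maximal for majorization. By \cite[Lemma 2.2]{l_rbn}, these are exactly the \(\vec w_{r,\ell}\) listed above, with \(\ell\) in the stated range. (If \(g-d+r\leq 0\), then \(W^r_dC=\Pic^d_C\), the dimension is \(g\), and \(\rho(g,r,d)\geq g\) at \(\ell=0\), so we may assume \(g-d+r>0\).) Hence
\[\dim W^r_dC=\max_\ell \dim W^{\vec w_{r,\ell}}C=\max_\ell \min\bigl(g,\rho(g,\vec w_{r,\ell})\bigr)\]
by Theorem~\ref{thm:hbn}\eqref{eq:hbn_dim}, with the convention that a negative \(\rho\) means the locus is empty.

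The main computation is the identity \(\rho(g,\vec w_{r,\ell})=\rho(g,r-\ell,d)-\ell k\). Write \(\vec w_{r,\ell}=b(k-r-1+\ell,\,s)\sqcup b(r+1-\ell,\,\ell)\) with \(s=d-g+1-k-\ell\). Inside each balanced block the entries differ by at most \(1\), so those pairs contribute nothing to \(h^1(\End)\). The second block has entries in \(\{1,2\}\) (\(\ell\) twos and \(r+1-2\ell\) ones when \(\ell\le (r+1)/2\); in general entries near \(\ell/(r+1-\ell)+\dots\)). The first block has entries \(\le 0\) or close to it. The cross terms \(\sum\max(0,e_i-e_j-1)\) over \(i\) in the second block and \(j\) in the first block are then
\[(\text{sum of second block})\cdot(\text{length of first})-(\text{sum of first})\cdot(\text{length of second})-(\text{length product}),\]
provided every cross difference is at least \(1\). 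This is the point to check using the constraint \(\ell\le g-d+2r+1-k\) or \(\ell=0\). Substituting the lengths and sums and expanding should give \((r+1-\ell)(g-d+r-\ell)+\ell k\), which equals \(g-\rho(g,r-\ell,d)+\ell k\).

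Finally, the range of \(\ell\) has to be reconciled with \(0\le\ell\le\min(r,g-d+r-1)\) in the formula for \(\rho_k\). The values of \(\ell\) excluded by \cite[Lemma 2.2]{l_rbn} should be ones where the corresponding splitting type is not maximal (it specializes from another \(\vec w_{r,\ell'}\)) or where the naive formula underestimates. So one must show the maximum over the restricted range equals the maximum over the full range, for example by checking that \(\ell\mapsto\rho(g,r-\ell,d)-\ell k\) is concave and that the excluded \(\ell\) do not give a larger value. I expect this bookkeeping, together with verifying that the cross differences are all at least \(1\), to be the main obstacle.

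Part (2) then follows from Lin's theorem: each \(\vec w_{r,\ell}\) is a concatenation of two balanced types, hence tame, so \(W^{\vec w_{r,\ell}}C\) has rational singularities in characteristic \(0\) by \cite[Theorem 1]{Lin}. By the decomposition above, every irreducible component of \(W^r_dC\) is one of these loci; they are irreducible by Theorem~\ref{thm:hbn}(4) when \(\rho>0\), and finite sets of points when \(\rho=0\).
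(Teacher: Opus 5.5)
Your strategy is the paper's own. The paper notes that the components of \(W^r_dC\) are the loci \(W^{\vec w_{r,\ell}}C\) for the maximal (tame) splitting types taken from H.~Larson's Lemma 2.2, and then invokes Theorem~\ref{thm:hbn} and Lin's theorem. Your part (2) is fine. Your identity \(\rho(g,\vec w_{r,\ell})=\rho(g,r-\ell,d)-\ell k\) is also correct: with \(m=r+1-\ell\), the cross term is \((k-m)\ell-m(d-g+1-k-\ell)-(k-m)m=\ell k+m(g-d+r-\ell)\).

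The gap is the step you defer as bookkeeping, and the plan you sketch for it cannot work as stated. Larson's range \(\max(0,r+2-k)\le \ell\le r\) is not a sub-range of the range \(0\le \ell\le \min(r,g-d+r-1)\) defining \(\rho_k\). It can contain \(\ell>g-d+r-1\), and the two ranges can be disjoint. For a general hyperelliptic curve of genus \(5\) and \((r,d)=(2,6)\), the range defining \(\rho_2\) is \(\{0\}\), while the only maximal type is \(\vec w_{2,2}=(-2,2)\). No concavity argument can show that the values at two different points agree. The missing observation is the following. Put \(u\colonequals g-d+r-1\ge 0\) and \(v\colonequals r+2-k\); then
\[F(\ell)\colonequals \rho(g,r-\ell,d)-\ell k=\rho(g,r,d)+c\ell-\ell^2,\qquad c=g-d+2r+1-k=u+v.\]
So \(F(\ell)=F(c-\ell)\): \(F\) is a parabola symmetric about the midpoint of the two new endpoints \(u\) and \(v\). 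In the example, \(F(0)=F(2)=2=\dim W^2_6C\).

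With this, the comparison is routine:
\begin{itemize}
\item The cut \(\ell\le c\) in Larson's list removes only a terminal segment of \(B\colonequals[\max(0,v),r]\), never \(\min B\). On that segment \(F(\ell)-F(\ell-1)=c+1-2\ell<0\).
\item If \(u\ge r\) or \(v\le 0\), then \(B\) and \(P\colonequals[0,\min(r,u)]\) differ either by \([0,v)\), on which \(F\) increases up to \(F(v)\), or by \((u,r]\), on which \(F\) decreases from \(F(u)\).
\item If \(u<r\) and \(v>0\), then either \(v\le c/2\le u\), so both intervals contain the integers nearest \(c/2\), or \(u<c/2<v\), and then \(\max_P F=F(u)=F(c-u)=F(v)=\max_B F\).
\end{itemize}
The paper itself passes over this comparison, deferring to Pflueger and Jensen--Ranganathan.

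Separately, your description of the blocks is wrong, although your final formula survives.
\begin{itemize}
\item The count is \(h^0(\O_{\pp^1}(\vec e))=\sum_i\max(0,e_i+1)\). The \(\max(0,e_i-1)\) in the paper's formula for \(r\), which you copied, is a slip.
\item The block \(b(r+1-\ell,\ell)\) has entries \(\lfloor \ell/(r+1-\ell)\rfloor\) and \(\lceil \ell/(r+1-\ell)\rceil\), all \(\ge 0\). For \(2\ell\le r+1\) these are \(\ell\) ones and \(r+1-2\ell\) zeros, not twos and ones. The block carries exactly \(r+1\) sections.
\item The other block has average \((d-g+1-k-\ell)/(k-r-1+\ell)\le -1\) exactly because \(g-d+r\ge 0\). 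So all its entries are \(\le -1\), and every cross difference is \(\ge 1\) using only \(g-d+r\ge 0\) and \(\ell\ge 0\).
\item The constraint \(\ell\le g-d+2r+1-k\) does something else: it says the negative block has all entries \(\le -2\). That is what makes \(\vec w_{r,\ell}\) maximal for \(\ell\ge 1\). Otherwise, raising a \(-1\) to \(0\) and lowering a positive entry by one gives a strictly more general type with the same \(h^0\).
\item Likewise, \(\ell<r+2-k\) is excluded because the negative block would be empty or of negative length, not because \(\vec w_{r,\ell}\) fails to be maximal.
\end{itemize}
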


The references given above for the upper bound on the dimension in part~\eqref{hbn_dim} of Corollary~\ref{cor:hbn_wrd} all involved degeneration --- either to a chain of elliptic curves \cite{l_rbn, llv}, or via tropical geometry \cite{Pf, cpj1, cpj2} ---  in some step of the proof.  Recently, Farkas--Feyzbakhsh--Rojas \cite[Theorem 1.4]{ffr} gave a new proof that \(\dim W^r_dC = \min(g, \rho_k(g, r, d))\) using Bridgeland stability and elliptic K3 surfaces, which has the feature of producing \textit{smooth} curves that are Hurwitz--Brill--Noether general.

Hurwitz--Brill--Noether theory can be used to explain \(\rho\)-unexpected linear series and components of large dimension.  We illustrate this in two examples.

\begin{example}\label{ex:814}[{cf.~\cite[Lemma 3.8]{mukai}}]
Let \(C\) be a general \(4\)-gonal curve of genus \(8\).  Since \(\rho(8, 2, 7) = -1\), the Brill--Noether locus \(\M^2_{8, 7}\) has codimension \(1\).  But, by Corollary~\ref{cor:hbn_wrd}, we have \(\dim W^2_7C = 0\), and so \(\M^1_{8,4} \subset \M^2_{8,7}\).  This explains the counterexample \((8,1,4)\) in Theorem~\ref{thm:max}.\hfill\(\righthalfcup\)
\end{example}

\begin{example}\label{ex:1123}[{cf.~\cite[Example 4.4]{ht_red}}]
Let \(C\) be general trigonal curve of genus \(12\).  
By Corollary~\ref{cor:hbn_wrd}, we have \(\dim W^2_7C = 1\), and so \(\M^1_{12,3} \subset \M^2_{12,7}\). 
The component \(\M^1_{12, 3}\) has codimension \(- \rho(12, 1, 3) = 8\), which is strictly less than \(-\rho(12, 2, 7) = 9\).
Pflueger's Theorem~\ref{thm:Pf_neg} shows that \(\M^2_{12,7}\) has a component of the expected codimension \(9\), and so there are at least two components.\hfill\(\righthalfcup\)
\end{example}

We now turn to the geometry of the maps to projective space corresponding to points in \(W^{\vec{e}}C\).  As we see from Example~\ref{ex:trigonal_baby}, it is no longer true that a general point of \(W^{\vec{e}}C\) corresponds to a basepoint free line bundle once \(r \geq 1\).  There is an extra condition coming from the number of nonnegative parts of the splitting type:

\begin{thm}[{Cook-Powell--Jensen--E.~Larson--H.~Larson--V. \cite[Theorems 1 and 2]{hemb}}]
Let \(f \colon C \to \pp^1\) be a general degree \(k\) genus \(g\) cover.
\begin{enumerate}
\item A general line bundle \(L \in W^{\vec{e}}C\) is basepoint free if and only if \(e_{k-1} \geq 0\) (which implies \(r \geq 1\)).
\item A general line bundle \(L \in W^{\vec{e}}C\) is very ample if \(e_{k-2} \geq 0\) and \(r \geq 3\).
\end{enumerate}
\end{thm}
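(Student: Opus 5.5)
Both parts come down to comparing dimensions of splitting loci using the dimension statement of the Hurwitz--Brill--Noether Theorem~\ref{thm:hbn}\eqref{eq:hbn_dim}. The point is that \(h^0(C, L(-D)) = h^0(\pp^1, f_*(L(-D)))\), and removing points changes the splitting type in a controlled way.

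\textbf{Basepoint freeness.} Necessity comes first. If \(e_{k-1} < 0\), then \(H^0(C,L) = H^0(\pp^1, \O(e_k))\) comes from the single summand \(\O(e_k)\). Taking a point \(x \in \pp^1\), the sections vanishing along \(x\) are those in \(H^0(\O(e_k-1))\). So every section of \(L\) is \(f^*\) of a section of \(\O(e_k)\) times a fixed section of the rank-one piece of \(f_*L\), and its divisor is \(f^*(\text{divisor on } \pp^1) + B\) with \(B\) fixed. In fact \(L = f^*\O(e_k)\otimes \O(B)\), where \(B\) is the fixed divisor of \(L(-e_kH)\), which has \(h^0 = 1\). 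One checks that \(B \neq 0\) using \(d > k e_k\), which follows from \eqref{eq:rd}. So \(L\) has basepoints whenever \(e_{k-1}<0\). This matches Example~\ref{ex:trigonal_baby}.

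For sufficiency, assume \(e_{k-1} \geq 0\) and parametrize pairs \((L, p)\) with \(p\) a basepoint of \(L\). Then \(L(-p)\) has the same \(h^0\) as \(L\), and its pushforward \(f_*(L(-p))\) is an elementary modification of \(f_*L\) at \(f(p)\). Since \(h^0\) is unchanged, no global sections are lost under the modification, so the splitting type \(\vec{e}'\) of \(f_*(L(-p))\) drops one part by one while keeping \(h^0\) fixed. This forces the part that drops to be a part \(e_i\) with \(e_i \le 0\), or to come from the \(h^1\) side. Running over the possible \(\vec{e}'\) (majorization-minimal choices), I compare \(\rho(g,\vec{e}') + 1\) with \(\rho(g,\vec{e})\). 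The hope is that whenever two parts are \(\geq 0\), lowering a nonpositive part strictly increases \(h^1(\End)\) by at least \(2\): lowering \(e_i\) with \(e_i<e_{k-1}\le e_k\) raises \(\max(0,e_j-e_i-1)\) for both \(j=k-1,k\). This gives \(\rho(g,\vec{e}')+1<\rho(g,\vec{e})\), so the basepoint locus is a proper closed subset of \(W^{\vec{e}}C\). Irreducibility (part (4)) handles \(\rho>0\), and the \(\rho = 0\) case is handled by a finite-set dimension count.

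\textbf{Very ampleness.} I use the same strategy with pairs of points. \(L\) fails to be very ample exactly when \(h^0(L(-p-q)) \geq h^0(L)-1\) for some \(p,q\) (possibly equal). There are two cases:
\begin{itemize}
\item If \(f(p)\ne f(q)\), then \(f_*L(-p-q)\) is a modification at two distinct points.
\item If \(f(p)=f(q)\), the two points lie in one fiber. The modification happens at one point of \(\pp^1\), but the \(2\)-dimensional target means we need two parts \(\ge 0\) surviving. This is where the hypothesis \(e_{k-2}\ge0\) enters, since three nonnegative parts ensure the fiber points are separated.
\end{itemize}
In each case, the candidate splitting types \(\vec{e}''\) must satisfy \(\rho(g,\vec{e}'')+2<\rho(g,\vec{e})\) (or \(+1\) in the same-fiber case). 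The condition \(r\ge 3\) is used to rule out degenerate cases where the incidence loci have the same dimension. For example, when \(r = 2\) the image is a plane curve with nodes, so very ampleness fails by genus count.

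\textbf{Main obstacle.} The main obstacle is the second, same-fiber case. Elementary modifications of \(f_*L\) along a length-\(2\) subscheme of a fiber are not simply "lower two parts": the subscheme may be nonreduced at a ramification point of \(f\), or may sit in the directions of only some summands. Controlling the resulting \(h^1(\End)\) jump will require the finer deformation theory behind Theorem~\ref{thm:hbn}, likely a degeneration to chains of elliptic curves with \(k\)-torsion gluing as in \cite{l_rbn,llv}. In that setting basepoints and separation can be checked componentwise via limit linear series, and that is what I would try first.
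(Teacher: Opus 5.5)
The survey gives no proof of this statement; it only cites \cite{hemb}. So I am judging your outline on its own terms, and it has genuine gaps.

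\textbf{Part (1).} \emph{Necessity.} The inequality \(d>ke_k\) does not follow from \eqref{eq:rd}. What \eqref{eq:rd} gives is \(\deg B=d-ke_k=k+g-1-\sum_{i<k}(e_k-e_i)\), and this is \(0\) exactly when \(L\simeq f^*\O(e_k)\). For example, on a general trigonal curve of genus \(5\) one has \(f_*H\simeq\O(-3)\oplus\O(-2)\oplus\O(1)\) and \(\rho(5,\vec e)=0\). Then \(W^{(-3,-2,1)}C=\{H\}\), which is basepoint free although \(e_{k-1}<0\). So your argument proves necessity only after excluding the splitting types of \(\O(m)\otimes f_*\O_C\), and that exclusion is genuinely needed.

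\emph{Sufficiency.} Your key inequality is false. The lowered part \(c=e_i\) must satisfy \(c\le -1\), not \(c\le 0\), since lowering a zero part loses a section. Lowering it changes \(h^1(\End\O(\vec e))\) by \(\#\{l: e_l\ge c+1\}-\#\{l: e_l\le c-2\}\); you counted only the pairs in which \(e_i\) is the smaller entry. For \(\vec e=(-5,-1,0,0)\), lowering \(-1\) gives \((-5,-2,0,0)\) and \(h^1\) goes from \(11\) to \(12\). Hence \(\rho(g,\vec e')+1=\rho(g,\vec e)\), and your incidence bound does not show that the basepoint locus is proper. The count does work when the negative parts of \(\vec e\) are balanced, for instance for the maximal types \(\vec w_{r,\ell}\).

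The missing idea is this. \(L=L'(p)\) has type \(\vec e\), not the type of a general positive modification of \(\O(\vec e')\), which would raise the smallest part. That forces the line in \((f_*L')_{f(p)}\) cut out by \(p\) into the subbundle spanned by the summands of degree \(\ge c-1\). This is a condition of expected codimension \(\#\{l: e_l\le c-2\}\). Since \(p\) moves in only one dimension, you need that codimension to be attained jointly on \(W^{\vec e'}C\times C\). That is a pointed refinement of Theorem~\ref{thm:hbn}, controlling how points of \(C\) sit relative to the splitting filtration. It does not follow from the dimension statement you invoke.

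\textbf{Part (2)} is not proved. The inequalities \(\rho(g,\vec e'')+2<\rho(g,\vec e)\) (resp.\ \(+1\)) are asserted without computation, and they are false in general. Take \(\vec e=(-5,-1,0,0,1)\), so that \(r=3\) and \(e_{k-2}\ge 0\). Let \(p\) be a would-be basepoint and \(q\) a general point of another fiber. Then \(\vec e''=(-5,-2,0,0,0)\), which has the same \(h^1(\End)=17\), so \(\rho(g,\vec e'')+2=\rho(g,\vec e)+2\). The same-fiber case, including nonreduced length-\(2\) subschemes at ramification points, is left open, as you acknowledge. The hypothesis \(r\ge 3\) is never used in an actual argument. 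As written, the proposal is a heuristic dimension count; the real content is the refined pointed or degenerative analysis you postpone to your last paragraph.
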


 Necessary and sufficient criteria for very ampleness are given in \cite[Theorem 8.8]{hemb}.
We conjecture \cite[Conjecture 8.3]{hemb} that a general line bundle in \(W^{\vec{e}}C\) is \(p\)-very ample if \(e_{k-p-1} \geq 0\) and \(r \geq 2p+1\).

Extra obstructions to \(p\)-very ampleness for splitting loci exist because the image of \(C\) under the complete linear series of \(L \in W^{\vec{e}}\) lies on a scroll \(\pp E^\vee \xrightarrow{|\O_{\pp E^\vee}(1)|} \pp H^0(C, L)^\vee\), where \(E \simeq \bigoplus_{i : e_i \geq 0} \O_{\pp^1}(e_i)\) is the nonnegative part of \(\O(\vec{e})\).

\begin{example}
Suppose that \(C\) is a general curve of genus \(5\), which is tetragonal.  A line bundle of degree \(10\) always gives a map to \(\pp^5\), and the general such line bundle is \(2\)-very ample by the Embedding Theorem~\ref{thm:emb}.  Let \(f \colon C \to \pp^1\) be a degree \(4\) map and let \(L \in W^{(-1, 0, 1, 2)}C\), which is an irreducible \(1\)-dimensional locus in \(\Pic^{10}_C\) by the Hurwitz--Brill--Noether Theorem~\ref{thm:hbn}.  Since \(H^0(C, L) = H^0(\pp^1, f_*L) = H^0(\pp^1, E \colonequals \O_{\pp^1}\oplus\O_{\pp^1}(1)\oplus \O_{\pp^1}(2))\), the complete linear series \(\varphi_L\) of \(L\) factors through a map \(C \to \pp E^\vee\):
\begin{center}
\begin{tikzcd}[row sep = small]
L \arrow[dr, dash] & \\
&C \arrow[dddd, "f"] \arrow[rrr, "\varphi_L"] \arrow[ddr, hook] &&& \pp^5 \\
&\\
&& \pp(f_*L)^\vee \arrow[ddl] \arrow[uurr, swap, dashed, "|\O(1)|"] \arrow[dr, dashed] \\
f_*L \arrow[dr, dash] &&& \pp E^\vee \arrow[uuur,swap,  "|\O(1)|"] \arrow[dll, "\pp^2\text{-bundle}"]\\
&\pp^1
\end{tikzcd}
\end{center}
The image of \(C\) in the \(\pp^2\)-bundle \(\pp E^\vee\) meets every fiber in \(4\) points, and each fiber is linearly embedded in \(\pp^5\) by \(|\O_{\pp E^\vee}(1)|\).
Since it is a divisorial condition for \(4\) points in \(\pp^2\) to contain \(3\) collinear points, you would expect that some fiber of \(f\) to contain \(3\) collinear points.  In \cite[Section 7.1.1]{hemb} we compute that there are exactly \(2\) such fibers counted with multiplicity.  So \(L\) is \textbf{not} \(2\)-very ample even though \(r \geq 5\).\hfill\(\righthalfcup\)
\end{example}

\subsection{Geometry of Brill--Noether curves}

We now turn to understanding the geometry of the curves \(f \colon C \to \pp^r\) parameterized by the 
Brill--Noether component \(\M_g(\pp^r, d)^{\BN}\).  Recall that for \(r \geq 3\), which we assume in this section, the general such map is an embedding.  We will often conflate the map \(f\) and its image \(C \subset \pp^r\).  We call such curves Brill--Noether curves, or \defi{BN-curves} for short.  The most fundamental question for any subvariety of projective space is the following:

\begin{question}\label{ques:ideal} 
Given a general BN-curve \(C \subset \pp^r\) of degree \(d\) and genus \(g\), what is the shape of its homogeneous ideal \(I_C\)? 
\end{question}

The first subquestion is what is the dimension of each graded piece of \(I_C\), i.e., what is the Hilbert function of \(C\)?  If we write \(\O_C(k) \colonequals \O_{\pp^r}(k)|_C\), then the \(k\)-graded piece of \(I_C\) is the kernel of the restriction map
\begin{equation}\label{eq:mrc}
H^0(\pp^r, \O_{\pp^r}(k)) \to H^0(C, \O_C(k)).
\end{equation}

The natural conjecture is that this map is of maximal rank.  This was first conjectured by Max Noether \cite{noether_mrc} and later reformulated in modern terms by Severi in 1915 \cite{severi_mrc}.  Special cases of this conjecture attracted significant attention \cite{mrat, bp, t_mrc,   b_quad, jp_mr, lotz1}. The full conjecture was recently proven in full generality in characteristic \(0\) by E.~Larson.

\begin{thm}[{The Maximal Rank Theorem (E.~Larson \cite[Theorem 1.2]{mrc})}]\label{thm:mrc}
Assume that the characteristic is \(0\).
Let \(C\subset \pp^r\) be a general BN-curve of degree \(d\) and genus \(g\).  Then for all \(k \geq 1\), the restriction map~\eqref{eq:mrc} is either injective or surjective.  In particular, the Hilbert function of \(C\) is \(\min\left({k+r \choose k}, kd+ 1 -g\right)\).
\end{thm}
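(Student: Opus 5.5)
We follow the inductive degeneration strategy of E.~Larson. First we make two standard reductions. Since maximal rank is an open condition, it suffices to exhibit one curve in the Brill--Noether component $\M_g(\pp^r, d)^{\BN}$ (or a limit of such curves, e.g.\ a nodal curve lying in the closure and smoothable there with $H^1(N)=0$) for which \eqref{eq:mrc} has maximal rank. Second, for a general BN-curve with $k \geq 2$ we have $H^1(\O_C(k))=0$, since $d$ is large enough relative to $g$ by \eqref{eq:BN}. Hence $h^0(\O_C(k)) = kd+1-g$. For $k=1$ the statement is just nondegeneracy plus linear normality when $d \le g+r$. Moreover, once the map is surjective in degree $k$ (and $C$ is projectively normal in that range), surjectivity propagates to degree $k+1$ by a Castelnuovo-type argument. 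So the real content is, for each $r$, finitely many critical degrees $k$ near the threshold where $\binom{k+r}{k} \approx kd+1-g$.

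The key tool is a hyperplane (or hypersurface) degeneration. We specialize $C$ to a reducible curve $C' \cup D$, where $D$ lies in a hyperplane $H \cong \pp^{r-1}$ (or on a quadric) and meets $C'$ at prescribed points. We then use the exact sequence
\[0 \to \O_{\pp^r}(k-1) \to \O_{\pp^r}(k) \to \O_H(k) \to 0\]
together with its analogue on the curve:
\[0 \to \O_{C'}(k-1)(\,\text{pts of } C'\cap H\text{ off } D) \to \O_{C'\cup D}(k) \to \O_{D}(k)\to 0.\]
This reduces maximal rank for $C'\cup D$ in degree $k$ to two statements. One is maximal rank in degree $k$ for $D\subset\pp^{r-1}$. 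The other is maximal rank in degree $k-1$ for $C'\subset \pp^r$, twisted down by some points. To make the induction close, we strengthen the hypothesis to the version \emph{with general points imposed}: maximal rank for $I_{C\cup \Gamma}(k)$, where $\Gamma$ is a set of general points (or general points in $H$). We also need to allow curves that are not quite BN but are limits of BN-curves: unions of a BN-curve with secant lines or rational normal curves. Throughout we track $H^1(N_C)=0$, which guarantees that the union deforms into the BN component, and the interpolation-type statement that $C$ can be made to pass through the required number of general points of $H$.

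The main obstacle is the combinatorics of making the numerics work. At each step one must choose the degree and genus of $D$ and $C'$, the number of points in $C'\cap H$, and the auxiliary points so that both pieces are again in the inductive range and exactly balance $\binom{k+r}{k}$ against $kd+1-g$. Near the critical degrees, and in small $r$ (say $r=3,4$ and $k=2,3$), the generic choice fails. Those cases must be handled separately, either by degenerating into quadric or cubic surfaces or by direct computer verification of finitely many base cases. I would first set up the inductive statement $H^{\text{MRC}}(d,g,r,k)$ with general points, verify the base cases ($k=1$, rational normal curves, and small $r$), and then attempt the hyperplane-splitting induction, isolating the finitely many exceptional $(d,g,r,k)$ where the numerics fail for separate treatment.
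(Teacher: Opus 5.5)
The paper does not prove this theorem. It cites Larson and summarizes her strategy as Hirschowitz-style embedded degeneration to $X\cup Y$, with interpolation results for BN-curves used to build the reducible curves. Your outline matches that summary in spirit, but it is a plan rather than a proof, and the one step you make explicit is set up incorrectly.

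\textbf{The residual step.} For $X=C'\cup D$ with $D\subset H$, set $\Gamma\colonequals (C'\cap H)\smallsetminus D$. The points $\Gamma$ belong on the hyperplane side:
\[0\to \mathcal{I}_{C'}(k-1)\to \mathcal{I}_{C'\cup D}(k)\to \mathcal{I}_{D\cup\Gamma\subset H}(k)\to 0.\]
So maximal rank of $C'\cup D$ in degree $k$ follows from maximal rank of $C'\subset\pp^r$ in degree $k-1$ (untwisted) and of $D\cup\Gamma\subset H$ in degree $k$, provided the numerics make both maps injective or both surjective.

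Your version puts $\O_{C'}(k-1)(\Gamma)$ on one side and $D$ alone on the other, and this fails in both directions:
\begin{itemize}
\item The map $H^0(\O_{\pp^r}(k-1))\to H^0(\O_{C'}(k-1)(\Gamma))$ factors through $H^0(\O_{C'}(k-1))$, so it is never surjective once $\Gamma\neq\emptyset$.
\item In the injective direction you discard
\[h^0(\O_{C'\cup D}(k))-h^0(\O_{C'}(k-1))-h^0(\O_D(k))=\#\Gamma\]
conditions. That is exactly the room you cannot spare at the critical degree.
\end{itemize}
Once $\Gamma$ is placed correctly, you also need $\Gamma$ to behave like a \emph{general} set of points of $H$, while $C'$ still passes through the prescribed points $C'\cap D$. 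That is a substantive theorem about hyperplane sections and interpolation of BN-curves, which is precisely where the interpolation input the paper mentions enters. Your proposal does not say how to obtain it.

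\textbf{The inductive core is missing.} You correctly reduce to the critical degrees, but then say the generic choice fails there and those cases ``must be handled separately.'' Those cases are the theorem. Specifically, you never:
\begin{itemize}
\item state the strengthened inductive hypothesis precisely;
\item exhibit degenerations $(C',D,\Gamma)$ that return both pieces to the inductive range with matching injective/surjective numerics;
\item justify that the leftover cases are finite. The set of critical $(d,g,r,k)$ is infinite, so this needs an argument.
\end{itemize}
Nothing in the outline uses characteristic $0$ either, which suggests the hard steps have not been engaged.

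\textbf{Smaller corrections.}
\begin{itemize}
\item $H^1(\O_C(k))=0$ for $k\geq 2$ is not a degree consequence of \eqref{eq:BN}. For $(g,r,d)=(20,3,18)$ one has $\rho=0$ but $2d=36<38=2g-2$. The vanishing comes from Gieseker--Petri: a nonzero $s\in H^0(K_C\otimes L^{-2})$ and independent $\sigma_1,\sigma_2\in H^0(L)$ give the nonzero element $\sigma_1\otimes\sigma_2 s-\sigma_2\otimes\sigma_1 s\in\ker\mu_L$.
\item $H^1(N_{C'\cup D})=0$ makes the union an unobstructed point of some component of the expected dimension. It does not by itself show that this component is $\M_g(\pp^r,d)^{\BN}$; that requires a separate argument.
\end{itemize}
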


The Strong Maximal Rank Conjecture \eqref{eq:mrc} is a substantial generalization of the Maximal Rank Conjecture, which predicts the dimension of the locus of line bundles in \(W^r_dC\) for a general curve \(C\) for which the Maximal Rank Conjecture holds.  To set notation, let \(L \in W^r_dC\).  If \(C\) is embedded in \(\pp^r\) by the complete linear series of \(L\) then the restriction map in~\eqref{eq:mrc} is the multiplication map
\begin{equation}\label{eq:smrc}
\phi^k_L \colon \Sym^k H^0(C, L) \to H^0(C, L^{\otimes k}).
\end{equation}
When \(g -d + r \geq 0\) and \(0 \leq \rho(g, r, d) < r-2\), every line bundle is guaranteed to be very ample \cite[Corollary 0.4]{farkas_p} and have \(h^0(C, L) = r+1\) (since \(\rho(g, r+1,d) <0\)).

\begin{conj}[{The Strong Maximal Rank Conjecture \cite[Conjecture 5.4]{af_smrc}}]\label{conj:smrc}
Let \(g, r, d, k\) be such that \(g-d+r \geq 0\) and \(0 \leq \rho(g, r, d) < r-2\) and \(k \geq 2\).  Let \(C\) be a general curve of genus \(g\).  Then the locus of line bundles \(L \in W^r_dC\) for which \(\phi^k_L\) is not of maximal rank has dimension
\begin{equation}\label{eq:smrc_dim}
\rho(g, r, d) - 1  - \left| {r + k \choose k} - (dk+1 - g) \right|.
\end{equation}
In particular,  \(\phi^k_L\) is of maximal rank for \textit{every} \(L \in W^r_dC\) when this quantity is negative.
\end{conj}

\noindent The dimension appearing in~\eqref{eq:smrc_dim} is the expected dimension of the degeneracy locus in \(W^r_dC\).

While Conjecture~\ref{conj:smrc} remains very much open in general, recent results of Farkas--Jensen--Payne proves three cases by tropical methods.  The second two cases were independently obtained by Liu--Osserman--Teixidor i Bigas--Zhang.

\begin{thm}[{\cite[Theorem 1.5]{fjp2} for \(g=13\), and \cite[Theorem 1.3]{fjp} and \cite[Theorem 1.1]{lotz} for \(g=22,23\)}]\label{thm:fjp}
The Strong Maximal Rank Conjecture~\ref{conj:smrc} is true for 
\[(g, r, d, k) \in \{(13, 5, 16, 2), (22, 6, 25, 2), (23, 6, 26, 2)\}.\]
\end{thm}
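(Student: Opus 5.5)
The approach follows the tropical strategy of Farkas--Jensen--Payne. For each of the three quadruples one first computes the numerics. For \((13,5,16,2)\) we have \(\rho = 13 - 6\cdot 2 = 1\), \(\binom{7}{2} = 21\) and \(2\cdot 16+1-13 = 20\), so \eqref{eq:smrc_dim} equals \(1-1-1 = -1\). For \((22,6,25,2)\) we have \(\rho = 22-7\cdot 3 = 1\), \(\binom{8}{2}=28\) and \(50+1-22=29\), again giving \(-1\). For \((23,6,26,2)\) we have \(\rho = 23 - 7\cdot 3 = 2\) and \(\binom 82 = 28 = 52+1-23\), giving \(2-1-0 = 1\). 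So in the first two cases we must show that \(\phi^2_L\) has maximal rank for \emph{every} \(L\in W^r_dC\), a one-dimensional family of line bundles. In the third case we must show that the locus where the square map \(\phi^2_L\) (a \(28\times 28\) matrix) is degenerate has dimension \(1\), i.e.\ is a proper divisor in the two-dimensional \(W^6_{26}C\).

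Since the statement concerns a general curve, it suffices by upper semicontinuity to exhibit one curve, or a degeneration, on which the conclusion holds. The plan is to degenerate to a chain of loops (the tropical analogue of a chain of elliptic curves) with carefully chosen generic edge lengths. On such a chain, every divisor class of degree \(d\) and rank \(r\) is described by Pflueger/Cools--Draisma--Payne--Robeva lingering-lattice-path data, i.e.\ rectangular tableaux with at most \(\rho\) extra freedom. For each such tropical divisor \(D\), one builds a tropical independence: a collection of \(\binom{r+2}{2}\) pairwise sums \(\psi_i+\psi_j\) of piecewise-linear functions in \(R(D)\) such that the minimum is achieved at least twice only in a controlled way. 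Equivalently, one shows that the tropicalizations of a basis of \(\Sym^2 H^0(L)\) are independent in the first two cases. When \(d\) exceeds \(\binom{r+2}{2}\), one instead works with the dual surjectivity statement, choosing the \(28\) or \(21\) relevant functions. The independence, together with Baker's specialization lemma and the nonarchimedean Poincaré--Lelong formalism, forces \(\phi^2_L\) to be injective, respectively of maximal rank, for every lift \(L\). For \(g=23\), one instead shows that the degeneracy condition cuts out a proper closed subset of each one-dimensional family of tropical classes.

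The key steps in order are as follows. First, classify the tropical \(g^r_d\)'s on the chain of loops via tableaux, splitting into the finitely many combinatorial types that depend on where the \(\rho\) units of freedom (lingering loops) occur. Second, for each type, construct an explicit "best-fit" configuration: choose, loop by loop, which pair \(\psi_i+\psi_j\) attains the minimum, using a bookkeeping of slopes along the bridges. Third, verify tropical independence by checking that on each loop a new term is uniquely minimal somewhere.

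The main obstacle is the second step in the cases where the lingering loops or the switching loops allow special positions. There the naive greedy assignment produces ties, and one must shift the chosen functions by constants depending on the position of \(D\). For \(g=23\), additional work is needed to show that ties occur only on a codimension-one sublocus of the tropical \(W^6_{26}\). I would first try the greedy algorithm and then patch each failing tableau configuration by hand, using the large genus to leave slack on the remaining loops.
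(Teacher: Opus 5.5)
There is a genuine error in your numerics for the third case, and it changes what has to be proved. For \((g,r,d,k)=(23,6,26,2)\) you have \(h^0(C, L^{\otimes 2}) = 2\cdot 26+1-23 = 30\), not \(28\).

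\begin{itemize}
\item The map \(\phi^2_L\colon \Sym^2 H^0(C,L)\to H^0(C,L^{\otimes 2})\) therefore goes from a \(28\)-dimensional space to a \(30\)-dimensional one. It is not a square matrix.
\item The quantity in \eqref{eq:smrc_dim} is \(2-1-|28-30| = -1\), exactly as in the other two cases. The paper makes this point right after the statement.
\item So the theorem asserts that \(\phi^2_L\) is injective for \emph{every} \(L\) in the two-dimensional \(W^6_{26}C\).
\end{itemize}

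Your plan for \(g=23\) shows only that ties in the tropical minimum occur on a codimension-one sublocus of the tropical \(W^6_{26}\). At best this bounds the failure locus to dimension at most \(1\). That is neither the statement nor sufficient for it, and it is useless for producing the divisor on \(\bar{\M}_{23}\) that motivates the result. What you actually need is tropical independence of all \(28\) functions \(\psi_i+\psi_j\) for every divisor class in the two-dimensional family. This includes the special configurations where the two units of freedom interact; you cannot discard any codimension-one set of classes.

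There is also a smaller setup problem in the first case. For \((13,5,16,2)\) the map goes \(21\to 20\). The \(21\) products cannot be linearly independent, so their tropicalizations cannot be tropically independent, contrary to your ``equivalently'' sentence. What you need is \(20\) of the \(21\) functions \(\psi_i+\psi_j\) to be tropically independent, which gives surjectivity. Your switching criterion ``when \(d\) exceeds \(\binom{r+2}{2}\)'' compares the wrong quantities; the relevant comparison is \(2d+1-g\) versus \(\binom{r+2}{2}\). As written, it applies to none of the three cases.

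Beyond these points, your outline follows the Farkas--Jensen--Payne tropical strategy that the paper cites. However, the substantive step is left as ``greedy assignment plus patching by hand.'' That step is to produce, for every class and not just generic or vertex-avoiding ones, functions in the tropicalized linear series whose pairwise sums are independent. As it stands, the proposal is a research plan rather than a proof.
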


In all of these cases, the expected dimension~\eqref{eq:smrc_dim} is \(-1\), and so Theorem~\ref{thm:fjp} proves that \eqref{eq:smrc} is of maximal rank for every \(L \in \Pic^d_C\) for general \(C\).  Since the expected dimension is \(-1\), one also expects that the locus of curves in \(\M_g\) for which \eqref{eq:smrc} fails to be of maximal rank is an effective divisor.
With substantial additional work, Farkas--Jensen--Payne prove this, and in the second and third cases that the closures of these divisors in \(\bar{\M}_{22}\) and \(\bar{\M}_{23}\) have slope less than \(13/2\), the slope of the canonical divisor \(K_{\bar{\M}_g}\).  This proves that the moduli spaces \(\bar{\M}_{22}\) and \(\bar{\M}_{23}\) are of general type \cite[Theorem 1.1]{fjp}.  In the first case, 
Farkas--Jensen--Payne use this to show that \(\bar{\M}_{13,n}\) is of general type for \(n \geq 9\) \cite[Theorem 1.6]{fjp2} and that the Prym moduli space \(\bar{\R}_{13}\) is of general type \cite[Theorem 1.2]{fjp2}.  

We end with a short overview of one of the key results going into E.~Larson's proof of the Maximal Rank Theorem~\ref{thm:mrc}.  The proof uses embedded degeneration to reducible nodal curves \(X \cup Y \subset \pp^r\), substantially refining an idea first utilized by Hirschowitz to prove the maximal rank conjecture for rational space curves \cite{mrat}.  To handle all cases, Larson utilized partial progress on the interpolation problem for BN-curves, which naturally builds reducible curves by taking the union of two curves passing through the same collection of points.  
To state this problem, let \(\M_{g,n}(\pp^r,d)^{\BN}\) be the unique component of \(\M_{g,n}(\pp^r, d)\) mapping to the Brill--Noether component under the natural map \(\M_{g,n}(\pp^r, d) \to \M_g(\pp^r, d)\).  An element of \(\M_{g,n}(\pp^r, d)\) is a pair \([f \colon C \to \pp^r, p_1, \dots, p_n]\), where \(p_i \in C\) are distinct points.  There is a natural evaluation map
\begin{equation}\label{eq:eval}
\ev_n \colon \M_{g,n}(\pp^r, d)^{\BN} \to (\pp^r)^n,  \qquad [f \colon C \to \pp^r, p_1, \dots, p_n] \mapsto (f(p_1), \dots, f(p_n)).
\end{equation}
The interpolation problem for Brill--Noether curves asks:

\begin{question}\label{ques:inter}
For given \((g, r, d)\), what is the largest \(n\) such that \(\ev_n\) is dominant?
\end{question}

Concretely, this asks for the maximum number of general points interpolated by a general BN-curve of fixed invariants.

By the Brill--Noether Theorem (cf.~Corollary~\ref{cor:BN_comp}) the dimension of \(\M_{g,n}(\pp^r,d)^{\BN}\) is \((r+1)d - (r-3)(g-1) + n\).  A dimension count therefore predicts that the answer should be
\begin{equation}\label{eq:inter_conj}
\text{\(\ev_n\) is dominant} \qquad \Leftrightarrow \qquad n \leq \left\lfloor \frac{(r+1)d - (r-3)(g-1)}{r-1} \right\rfloor.
\end{equation}

However this expectation cannot always hold: for curves of degree \(5\) and genus \(2\) in \(\pp^3\), \eqref{eq:inter_conj} predicts that they interpolate \(10\) general points; but every such curve is contained in a quadric surface by comparing dimensions in~\eqref{eq:mrc}, and it is elementary linear algebra that quadric surfaces interpolate only \(9\) general points.  Similar counterexamples exist curves of degree \(7\) and genus \(2\) in \(\pp^5\) and for canonical curves in \(\pp^3\) and \(\pp^5\).  Nevertheless, many authors contributed partial work \cite{sacc_int, can, ran_int,  p3, aly, ibe, p4}.  A full solution in arbitrary characteristic was recently obtained in joint work with E.~Larson.

\begin{thm}[{E.~Larson--V. \cite[Theorem 1.2]{interpolation}}]\label{thm:interpolation}
Brill--Noether curves of degree \(d\) and genus \(g\) in \(\pp^r\) interpolate the expected number \(\left\lfloor \frac{(r+1)d - (r-3)(g-1)}{r-1} \right\rfloor\) of general points if and only if
\[(g, r, d) \not\in \{(2, 3, 5), (4, 3, 6), (2, 5, 7), (6, 5, 10)\}.\]
\end{thm}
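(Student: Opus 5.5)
The plan has two halves: show that the four listed cases genuinely fail, and show that every other \((g,r,d)\) interpolates the expected number of points.

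\emph{The exceptions.} For each of the four triples I would exhibit an auxiliary variety that contains every such curve but passes through too few general points.
\begin{itemize}
\item For \((2,3,5)\), a quadric surface, as explained above.
\item For canonical curves \((4,3,6)\), again a quadric surface, since \(h^0(\O_C(2)) = 9 < 10\).
\item For \((6,5,10)\), canonical curves in \(\pp^5\) lie on the del Pezzo surfaces, or on the intersection of the six quadrics containing them. One checks that the relevant family of surfaces interpolates fewer than the expected \(\lfloor (60+10)/4\rfloor = 17\) points.
\item For \((2,5,7)\), the curve lies on a rational normal scroll surface (it is a bisection of a conic-type scroll via the hyperelliptic map), and one counts the dimension of the scrolls through the required points.
\end{itemize}
In each case the argument is a dimension count: the number of general points the auxiliary variety can contain is smaller than the expected \(n\).

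\emph{Existence, reformulated.} Dominance of \(\ev_n\) is equivalent to surjectivity of the differential at a general point. This in turn amounts to the vanishing
\[H^1\bigl(N_C(-p_1-\dots-p_n)\bigr)=0,\]
or, better, to the normal bundle \(N_C\) satisfying \emph{interpolation}: for every \(n\), twisting down by \(n\) general points yields a bundle with \(h^0 = \max(0,\chi)\) and \(h^1 = \min(0,\chi)\), up to the divisibility correction. So the heart of the proof is showing that the normal bundle of a general BN-curve satisfies interpolation outside the four exceptions. Interpolation for \(N_C\) is an open condition that behaves well under specialization. My first choice of method is therefore to degenerate \(C\) to a reducible nodal curve \(X\cup Y\), with \(Y\) a rational curve or a line, or a \(1\)-secant or \(2\)-secant rational normal curve of smaller degree. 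One then uses the elementary-modification exact sequences
\[0\to N_{X\cup Y}|_Y \to N_{X\cup Y}\to N_{X\cup Y}|_X\to 0,\]
where \(N_{X\cup Y}|_X = N_X(+\text{nodes})\) is a positive modification toward the tangent lines of \(Y\). The key step is then an induction on \((d,g)\) via these degenerations: adding a line (\(d\mapsto d+1\)), adding a \(2\)-secant line (\(g\mapsto g+1\)), and so on. Interpolation for the modified bundles follows from interpolation of \(N_X\) together with careful control of the directions of the modifications.

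\emph{Main obstacle.} I expect the main obstacle to be twofold. First, the inductive statement must be strengthened to include pointing modifications, so that it survives the gluing. Second, the induction must be launched in small genus and degree in every \(r\), since there the degenerations are unavailable or land in the exceptional cases. I would handle the base cases by direct constructions on rational normal scrolls or via projection from a point (\(\pp^{r+1}\dashrightarrow\pp^r\), relating \(N_C\) to the projected normal bundle), plus a finite list of computer-checked or explicit cases.
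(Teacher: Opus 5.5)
\paragraph{The exceptional cases.} Your treatment of \((6,5,10)\) contains an arithmetic error that breaks the argument. The expected number is \(\lfloor (6\cdot 10-2\cdot 5)/4\rfloor = 12\), not \(17\); you flipped the sign of \((r-3)(g-1)\). So showing ``fewer than \(17\)'' proves nothing. What you need is the following:
\begin{itemize}
\item A general canonical genus-\(6\) curve lies on a quintic del Pezzo surface. That surface is cut out by five of the six quadrics through \(C\); all six together cut out \(C\) itself, not a surface.
\item These surfaces form a single \(\PGL_6\)-orbit with finite stabilizer, hence a \(35\)-dimensional family.
\item Each point imposes three conditions, so they contain at most \(\lfloor 35/3\rfloor = 11 < 12\) general points.
\end{itemize}
For \((2,5,7)\) you should likewise actually carry out the count. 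The scroll of hyperelliptic chords is \(S(2,2)\). These scrolls form a \(29\)-dimensional family (stabilizer \(\PGL_2\times\PGL_2\)), so they contain at most \(9 < 10\) general points.

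\paragraph{The existence half.} There are two real gaps here.

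First, the claim that \(\ev_n\) is dominant iff its differential is surjective at a general point is generic smoothness, which holds only in characteristic \(0\). In positive characteristic only the implication ``surjective differential \(\Rightarrow\) dominant'' survives, yet the theorem is stated in arbitrary characteristic. This is not a technicality. By the Warning after Proposition~\ref{prop:odd_deg}, in characteristic \(2\) a rational curve of even degree \(d\) in \(\pp^3\) never has balanced normal bundle. Hence \(h^1(N_C(-p_1-\cdots-p_{2d}))\neq 0\) for every choice of points, and the differential of \(\ev_{2d}\) is nowhere surjective. The theorem still asserts dominance there, necessarily inseparable. So your reduction to interpolation of \(N_C\) cannot handle these cases, and they need a separate argument.

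Second, the inductive engine is only asserted. The claim that ``interpolation for the modified bundles follows from interpolation of \(N_X\) with careful control of directions'' cannot be a general principle. Among the moves (A)--(C) of Section~\ref{sec:BN_exist}:
\begin{itemize}
\item \((2,3,5)\) is reachable only by attaching a \(2\)-secant line to an elliptic quartic;
\item \((4,3,6)\) is reachable only by attaching a \(5\)-secant twisted cubic to a twisted cubic.
\end{itemize}
In both cases the smaller curve's normal bundle does interpolate, yet the target is an exception. Any valid mechanism must therefore break down at exactly these steps.

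The mechanism the paper illustrates, in the proof of Proposition~\ref{prop:odd_deg}, runs as follows.
\begin{enumerate}
\item Peel off \(1\)-secant lines via Lemma~\ref{lem:1sec}. The paper singles out this degeneration as one of the few whose gluing data is understood; your \(2\)-secant line step has no such description.
\item Specialize all the pointing points \(q_i\) to a single point \(q\) on the curve. The accumulated modifications then merge into \(N_X[2p_1+\cdots+2p_m\posmod N_{X\to q}]\).
\item This bundle sits in the projection-from-\(q\) sequence \eqref{eq:proj_d}. The accrued modifications are precisely what rebalance the otherwise lopsided sequence \eqref{eq:proj}.
\item The quotient, a twist of the normal sheaf of the projection to \(\pp^{r-1}\), can then be treated inductively.
\end{enumerate}
Your plan uses projection only for base cases, so as written the induction does not close.

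\paragraph{Minor corrections.} The kernel of restriction to \(X\) is \(N_{X\cup Y}|_Y(-X\cap Y)\), not \(N_{X\cup Y}|_Y\). In your definition of interpolation, \(h^1=\max(0,-\chi)\), not \(\min(0,\chi)\).
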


This theorem follows by deformation theory from a semistability-like property of the normal bundles \(N_C\) of general BN-curves \(C\) \cite[Theorem 1.4]{interpolation}.  The techniques developed in \cite{interpolation} (see Section~\ref{sec:normal} below) can be refined to prove (semi)stability of normal bundles for space curves \cite[Theorem 1]{stab_p3}, canonical curves \cite[Theorem 1.1]{canonical} (resolving the semistability version of a conjecture of Aprodu--Farkas--Ortega \cite[Conjecture 0.4]{AFO}), and asymptotically when \(d\) is large relative to \(r\) \cite[Theorems 1.1 and 1.2]{cs_norm}.

\section{Important Techniques}\label{sec:techniques}

\subsection{Abstract degeneration and limit line bundles}\label{sec:BN_nonexist}
Degeneration is a fruitful tool in all aspects of Brill--Noether theory.  We begin with the technique of degenerating the curve \(C\) and keeping track of how the line bundles in \(W^r_dC\) degenerate.  
To illustrate this technique we give a short proof of the classic Brill--Noether nonexistence theorem.

\begin{thm}[Brill--Noether Nonexistence]\label{thm:BN_nonexistence}
Suppose that \(\rho(g, r, d) < 0\).  If \(C\) is general curve of genus \(g\), then \(W^r_dC\) is empty.
\end{thm}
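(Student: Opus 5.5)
We will degenerate \(C\) to a chain of elliptic curves and track the limits of line bundles in \(W^r_dC\), in the spirit of Welters. Since nonemptiness of \(W^r_dC\) is a closed condition in families (the relative \(W^r_d\) is a degeneracy locus, hence proper over the base), it suffices to exhibit a single nodal curve of compact type in \(\bar{\M}_g\) with no limit \(g^r_d\).

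\textbf{The degeneration.} Take a one-parameter family \(\mathcal{C} \to B\) over the spectrum of a DVR whose general fiber is smooth of genus \(g\). The special fiber is \(X_0 = E_1 \cup \cdots \cup E_g\), a chain of elliptic curves with \(E_i \cap E_{i+1} = \{q_i\}\). On each \(E_i\) (for \(1<i<g\)) let \(p_i = q_{i-1}\) and let \(q_i\) be chosen so that \(p_i - q_i\) is not torsion in \(\Pic^0(E_i)\). After a base change, any \(L\) in \(W^r_d\) of the generic fiber extends to a line bundle on \(\mathcal{C}\) (resolving singularities of the total space if needed). Twisting by components of the special fiber, we get for each \(i\) a limit \(L_i\) of degree \(d\) on \(E_i\), with all other components having degree \(0\). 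The \((r+1)\)-dimensional space of sections specializes to an \((r+1)\)-dimensional space \(V_i \subset H^0(E_i, L_i)\).

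\textbf{Vanishing sequences.} For each \(i\), record the vanishing orders of \(V_i\) at \(p_i\) and \(q_i\): \(a_0^i < \cdots < a_r^i\) at \(p_i\) and \(b_0^i<\cdots<b_r^i\) at \(q_i\). The standard compatibility from twisting says \(a_j^{i+1} \ge d - b_{r-j}^{i}\). On an elliptic curve, for each \(j\) we have \(a_j + b_{r-j} \le d\), with equality only if \(L_i \cong \O(a_jp_i + b_{r-j}q_i)\). Because \(p_i - q_i\) is non-torsion, this equality can hold for at most one \(j\). Hence, setting \(\alpha^i_j = a^i_j - j\), we have \(\alpha^{i+1}_j \ge \alpha^i_j + 1\) for all \(j\) except at most one. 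On the first and last components, which carry only one marked point, we get the cruder bound \(\sum_j \alpha_j \ge\) the ramification contributed by a \(g^r_d\) on a genus-\(1\) curve at one point, namely \(r\).

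\textbf{Counting.} Summing the total increase of \(\sum_j \alpha_j\) along the chain gives \(\sum_j \alpha_j^g \ge (r+1)g - g = rg\) at the last node. Here each \(E_i\) loses at most one unit, and there is a careful accounting at the ends. On the other hand, the vanishing orders at a point of the last component \(E_g\) satisfy \(a_r \le d\) and, more precisely, \(a_j \le d - r + j\) with the genus-\(1\) correction forcing \(a_j \le d-r+j-1\) for all but one \(j\). This gives \(\sum \alpha_j \le (r+1)(d-r) - r\). Comparing the two bounds yields \((r+1)(d-r) \ge rg + r - \text{(small)}\), which rearranges to \(\rho(g,r,d) \ge 0\). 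So when \(\rho<0\) there is no limit, and hence no \(g^r_d\) on the general curve.

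\textbf{Main obstacle.} The delicate step is the bookkeeping: making the inequality \(\alpha^{i+1}_j \ge \alpha^i_j + 1\) "for all but one \(j\)" sharp enough, together with the boundary terms at \(E_1\) and \(E_g\), so that the final inequality is exactly \(\rho \ge 0\) rather than something weaker by an additive constant. I would first verify the count on small cases (\(r=1\), where it should recover that the gonality is at least \(\lceil g/2\rceil+1\)) to pin down the boundary terms. A cleaner alternative is to treat \(E_1\) and \(E_g\) uniformly by adding auxiliary rational tails.
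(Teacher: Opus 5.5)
Your setup is the classical Eisenbud--Harris/Welters version of the argument, and the local inputs you list are correct: \(\sum_j \alpha^2_j \ge r\) coming from \(E_1\), an increase of at least \(r\) across each of \(E_2, \dots, E_{g-1}\), and the one-point bound \(\sum_j \alpha_j \le (r+1)(d-r) - r\) on an elliptic curve. The gap is in the final count, which is the whole content of the theorem.

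\textbf{The gap.} The bound \(\sum_j \alpha^g_j \ge rg\) at the last node is false. Only \(E_1, \dots, E_{g-1}\) lie before \(p_g = q_{g-1}\), so your inputs give \(\sum_j \alpha^g_j \ge r(g-1)\). The contribution of \(E_g\) is already the \(-r\) in your upper bound, so you have counted \(E_g\) twice.

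Taken literally, your two bounds yield \(\rho(g,r,d) \ge r\), which is false. For \(g=2\), \(r=1\), \(d=2\), every curve has a \(g^1_2\), yet your own upper bound gives \(\sum_j \alpha^2_j \le 1 < rg\). So the ``(small)'' term is not a boundary correction to be tuned on examples; it is exactly this double count, and as written the comparison proves nothing.

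\textbf{The fix.} Once the double count is removed, \(r(g-1) \le (r+1)(d-r) - r\) is literally \(\rho(g,r,d) = (r+1)(d-r) - rg \ge 0\). There is no remaining slack to account for.

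\textbf{Comparison with the paper.} The paper treats the ends uniformly, as in your suggested alternative. It adjoins general points \(p^0 \in E^1\) and \(p^g \in E^g\) with sequences \(a^0_n = b^g_n = n\). For each fixed \(n\), it sums \(a^{i-1}_n + b^i_{r-n} \le d\) over all \(g\) components against \(a^i_n + b^i_{r-n} = d\) at the \(g-1\) nodes (for you, \(\ge d\)). Equality forces \(L^i \simeq \O_{E^i}(a^{i-1}_n p^{i-1} + b^i_{r-n} p^i)\), and the sum shows it occurs on at least \(g-d+r\) components for each \(n\). Since equality can occur for at most one \(n\) per component, this gives \((r+1)(g-d+r) \le g\). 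That is your count with the sums over \(i\) and \(j\) interchanged.

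The substantive difference is that the paper never specializes sections. It defines \(a^i_n\) via \(h^0\) of twisted limits restricted to the subchains \(X^{>i}\), so it needs only semicontinuity, not the existence of limit linear series.

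In your version, base change plus resolving the total space inserts rational chains at the nodes. This is harmless: on \(\pp^1\) one still has \(a_j + b_{r-j} \le d\), so the \(\alpha_j\) do not decrease across these chains. But it needs to be said.
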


Unwinding definitions, we must show that for every line bundle \(L\) on \(C\) of degree \(d\), we have \(h^0(C, L) \leq r\).
The key idea here is to degenerate \(C\) to a nodal curve \(X\); if we can show that \(L\) admits a limit on \(X\) with \(h^0\) at most \(r\), then the result follows from the semicontinuity theorem.  One might hope to show this by showing that \textit{every} line bundle on \(X\) of degree \(d\) has at most \(r\) global sections.
There are two potential subtleties in this approach: (1) If \(X\) is an arbitrary nodal curve, then \(L\) may limit to a torsion free sheaf that is not a line bundle; (2) Even if \(L\) limits to a line bundle on \(X\), this limit is not necessarily unique.

We sidestep the first issue by considering \defi{compact type} \(X\) --- a connected nodal curve whose dual graph is a tree, so called because the identity component of the Picard scheme is compact --- which guarantees that \(L\) limits to a line bundle on \(X\).  The second issue is actually a feature not a bug, and it is the crucial tool we will use: with more possible limits, it is \textit{easier} to find some limit with at most \(r\) sections.

The first proof of the Brill--Noether nonexistence theorem by Griffiths--Harris \cite{gh_80} used degeneration to a \(g\)-nodal curve and a subtle analysis of Schubert cycles in a further degeneration.
A second proof by Eisenbud--Harris \cite{EH83} utilized degeneration to a \(g\)-cuspidal curve, whose stable reduction is a rational curve with \(g\) elliptic tails (called a \defi{flag curve}) that was used in a subsequent reproof by also by Eisenbud--Harris \cite{EH_gp}.  Degeneration to a chain of \(g\) elliptic curves meeting at general nodes first appears in the work of Welters on Prym--Brill--Noether theory \cite{welters}, and there is now consensus that this degeneration yields clean and characteristic-free proofs.

Let \(X\) denote a chain of \(g\) elliptic curves \(E^1 \cup \cdots \cup E^g\) 
\begin{center}
\begin{tikzpicture}
\draw (0, 0) .. controls (1, -1) and (2, -1) .. (3, 0);
\draw (2, 0) .. controls (3, -1) and (4, -1) .. (5, 0);
\draw (4, 0) .. controls (5, -1) and (6, -1) .. (7, 0);
\filldraw (7.8, -0.5) circle[radius=0.02];
\filldraw (7.5, -0.5) circle[radius=0.02];
\filldraw (7.2, -0.5) circle[radius=0.02];
\draw (8, 0) .. controls (9, -1) and (10, -1) .. (11, 0);
\draw (10, 0) .. controls (11, -1) and (12, -1) .. (13, 0);
\filldraw (0.5, -0.42) circle[radius=0.03];
\filldraw (2.5, -0.42) circle[radius=0.03];
\filldraw (4.5, -0.42) circle[radius=0.03];
\filldraw (10.5, -0.42) circle[radius=0.03];
\filldraw (12.5, -0.42) circle[radius=0.03];
\draw (0.5, -0.75) node{$p^0$};
\draw (2.5, -0.75) node{$p^1$};
\draw (4.5, -0.75) node{$p^2$};
\draw (10.5, -0.8) node{$p^{g - 1}$};
\draw (12.5, -0.8) node{$p^g$};
\draw (1.5, -0.95) node{$E^1$};
\draw (3.5, -0.95) node{$E^2$};
\draw (5.5, -0.95) node{$E^3$};
\draw (9.5, -0.95) node{$E^{g - 1}$};
\draw (11.5, -0.95) node{$E^g$};
\end{tikzpicture}
\end{center}
such that the points \(p^{i-1}\) and \(p^i\) on \(E^i\) are general.  A line bundle on \(X\) is equivalent to a line bundle on each component \(E^i\).  The discrete invariant of such a line bundle is therefore a tuple of degrees \(\vec{d} = (d^1, d^2, \dots, d^g)\), where \(d^i\) is the degree of the bundle on the \(i\)th component.  The \defi{total degree} of such a line bundle is \(|\vec{d}| \colonequals \sum_i d^i\).

Consider a \(1\)-parameter degeneration \(\C \to \Spec K\llbracket t \rrbracket\) with general fiber \(C\) a smooth curve of genus \(g\) and special fiber \(X\) as above and such that the total space \(\C\) is smooth over the ground field \(K\) (such a \(1\)-parameter degeneration exists by formal patching \cite[Lemma 5.6]{liu}).  A line bundle \(L\) on \(C\) extends to a line bundle \(\L\) on \(\C\) (for example, write it as a difference of effective divisors on \(C\) and then take the closures of these divisors on \(\C\); since \(\C\) is smooth, this is necessarily Cartier).  The restriction \(\L|_X\) is one such limit of \(L\) to \(X\).  This limit has some degree distribution \(\vec{d}(\L|_X) = (d^1, \dots, d^g)\) such that \(|\vec{d}| = d = \deg(L)\).  However, given any nontrivial divisor \(D\) supported on the central fiber --- i.e., a linear combination of the components of \(X\) --- the line bundle \(\L(D)|_X\) is another limit of \(L\) to \(X\).

To be explicit, consider the twist by a single component \(E^i\).  Then, using the fact that \(\O_{\C}(E^1 + \cdots + E^g) \simeq \O_{\C}\), we have
\begin{equation}\label{eq:twist}
\L(E^i)|_{E^j} \simeq \begin{cases} \L|_{E^{i-1}}(p^{i-1}) & : \  j = i-1 \\
\L|_{E^{i}}(-p^{i-1} - p^i) & : \  j = i \\
\L|_{E^{i+1}}(p^{i}) & : \  j = i+1 \\
\L|_{E^{j}} & : \  \text{else}.
 \end{cases}
 \end{equation}
In particular, the limit \(\L(E^i)|_X\) can be distinguished from the limit \(\L|_X\) by its degree distribution; if \(\vec{d}(\L|_X) = (d^1, d^2, \dots, d^g)\) then 
\begin{equation}\label{eq:chip_fire}
\vec{d}(\L(E^i)|_X) = (d^1, \dots, d^{i-2}, d^{i-1} + 1, d^i - 2, d^{i+1}+1, d^{i+2},\dots, d^g).
\end{equation}
Combinatorially, the procedure of twisting by the \(i\)th component as in \eqref{eq:twist} has the effect on the degree distribution of \defi{chip firing} the \(i\)th vertex in the dual graph as in \eqref{eq:chip_fire}.  In tropical degenerations, where the limit is not compact type, but in fact every component has genus \(0\), chip firing generates the linear equivalence relation for divisors on graphs.
We will not explore this connection here, and only point it out so that the interested reader can see the parallels.

In fact, every possible limit of \(L\) on \(C\) to \(X\) can be obtained from one limit \(\L|_X\) by a sequence of chip firings as in \eqref{eq:twist}.  Induction on \(g\) easily proves that every possible degree distribution \(\vec{d}\) with total degree \(d\) can be obtained in \eqref{eq:chip_fire} in this way.
The upshot is that given any degree distribution, there is a unique limit having that degree distribution, and the limit for any one degree distribution determines all the others. We will denote the limit with a given degree distribution \(\vec{d}\) by \(L_{\vec{d}}\).  This can be easily computed by the combinatorial process of chip firing, as in the following example (that will be our running example for this section).

\begin{example}\label{ex:running_one_lim}
Suppose that \(g=3\) and \(d = 4\) and that \(\L|_X\) has the following components:
\[\L|_{E^1} \simeq \O_{E^1}(4p^1), \qquad \L|_{E^2} \simeq \O_{E^2}(-2p^1 + 2p^2), \qquad \L|_{E^3} \simeq \O_{E^1}.\]
In this case \(\vec{d}(\L|_X) = (4, 0, 0)\).  The limit with degree distribution \((3, 0, 1)\) is obtained from \(\L|_X\) by firing the first component twice and the second component once (i.e., it is \(\L(2E^1 + E^2)|_X\)), and using \eqref{eq:twist} this has components:
\[L_{(3,0,1)}|_{E_1} \simeq \O_{E^1}(3p^1), \qquad L_{(3,0,1)}|_{E_2} \simeq \O_{E^2}(-p^1 + p^2), \qquad L_{(3,0,1)}|_{E_3} \simeq \O_{E^3}(p^2).\]
Similarly, the limit with degree distribution \((1, 2, 1)\) has components:
\[L_{(1, 2,1)}|_{E_1} \simeq \O_{E^1}(p^1), \qquad L_{(2,1,1)}|_{E_2} \simeq \O_{E^2}(p^1 + p^2), \qquad L_{(1, 2,1)}|_{E_3} \simeq \O_{E^3}(p^2).\]
\hfill\(\righthalfcup\)
\end{example}

While this approach of recording any one limit suffices to determine all possible limits, it unnecessarily breaks the symmetry in the problem by singling out one degree distribution.  We now describe an equivalent and more symmetric way of recording the information of all possible limits.  For each \(1 \leq i \leq g\), let \(\vec{d}_i \colonequals (0,  \dots, 0, d, 0, \dots, 0)\) be the degree distribution with degree concentrated on the \(i\)th component. Define  
\[L^i \colonequals L_{\vec{d}_i}|_{E^i} \in \Pic^{d}_{E^i}.\]
The tuple \((L^1, \dots, L^g)\) is equivalent to the data of any one limit, via the process of chip firing~\eqref{eq:twist}. 

\begin{example}\label{ex:running_aspects}
Continuing with the line bundle in Example~\ref{ex:running_one_lim}, we compute that
\begin{align*}
L^1 &= L_{(4,0,0)}|_{E^1} \simeq \O_{E^1}(4p^1), \\ 
L^2 &= L_{(0,4,0)}|_{E^2} = L_{(4,0,0)}|_{E^2}(4p^1) \simeq \O_{E^2}(2p^1 + 2p^2), \\
L^3 &= L_{(0,0,4)}|_{E^3} = L_{(4,0,0)}|_{E^2}(4p^2) \simeq \O_{E^3}(4p^2).
\end{align*}
\hfill\(\righthalfcup\)
\end{example}

\begin{defin}
A \defi{limit line bundle} of degree \(d\) on \(X\) is a tuple
\(\vec{L}  = (L^1, \dots, L^g) \in \Pic^d_{E^1} \times \cdots \times \Pic^d_{E^g}\). The \(i\)th component \(L^i\) is called the \(i\)th \defi{aspect} of the limit line bundle.
\end{defin}

\begin{warning}
A limit line bundle is not, in itself, a limit of any line bundle of degree \(d\) --- in particular, since it has total degree \(dg\) not \(d\) --- but rather is a convenient and symmetric way of recording all possible limits. 
\end{warning}

Given a limit line bundle \(\vec{L}\) of degree \(d\) and a degree distribution \(\vec{d} = (d^1, \dots, d^g)\) with \(|\vec{d}| = d\), we write \(L_{\vec{d}}\) for the line bundle of multidegree \(\vec{d}\) obtained from \(\vec{L}\) by the chip firing procedure in~\eqref{eq:twist}.

\begin{defin}\label{def:r-pos}
A limit line bundle \(\vec{L}\) on \(X\) is \defi{\(r\)-positive} if for all degree distributions \(\vec{d}\) with \(|\vec{d}| = d\) we have \(h^0(X, L_{\vec{d}}) \geq r+1\).
Define the space of space of \(r\)-positive limit line bundles of degree \(d\):
\[LW^r_dX \colonequals \left\{\vec{L} \in \prod_i \Pic^d_{E^i} : \vec{L} \text{ is \(r\)-positive} \right\}.\]
\end{defin}

The space \(LW^r_dX\) acquires a scheme structure as an intersection of determinantal loci.  
%
%

\begin{example}
The limit line bundle in our running example whose aspects are given in Example~\ref{ex:running_aspects} is \(2\)-positive; we will not verify this entire claim here, but rather check the required sections for the degree distributions \((4, 0, 0)\), \((3, 0, 1)\), and \((1, 2, 1)\) whose components were computed in Example~\ref{ex:running_one_lim}.

For degree distribution \((4, 0, 0)\), all sections must vanish identically on \(E^2\).  Hence, considering agreement at the point \(p^2\), we see that the sections must also vanish along \(E^3\).  Finally, considering agreement at the point \(p^1\), we have
\[H^0(X, L_{(4, 0, 0)}) \simeq H^0(E^1, \O_{E^1}(4p^1)(-p^1)) \simeq k^{\oplus 3}.\]
For degree distribution \((3, 0, 1)\), similarly all sections must vanish along \(E^2\).  But in this case, since \(p^2\) is a basepoint of \(L_{(3, 0, 1)}|_{E^3}\simeq \O_{E^3}(p^2)\), there is a \(1\)-dimensional space of sections on \(E^3\).  And there is a \(2\)-dimensional space of sections along \(E^1\) isomorphic to \(H^0(E^1, \O_{E^1}(3p^1)(-p^1))\).  All together, \(h^0(X, L_{(3, 0, 1)}) = 3\) as desired.
Finally, for degree distribution \((1, 2, 1)\), there are \(1\)-dimensional spaces of sections \(H^0(E^1, \O_{E^1}(p^1))\) and \(H^0(E^1, \O_{E^3}(p^2))\) on \(E^1\) and \(E^3\) respectively.  These have \(p^1\) and \(p^2\) respectively as basepoints.  Thus, to satisfy agreement conditions at the nodes, the
sections on \(E^2\) are necessarily \(H^0(E^2, \O_{E^2}(p^1 + p^2)(-p^1-p^2))\), which is also \(1\)-dimensional.  All together, again we have  \(h^0(X, L_{(1, 2, 1)}) = 3\).\hfill\(\righthalfcup\)
\end{example}

If \(\vec{L}\) is the limit line bundle corresponding to the line bundle \(\L|_X\) --- which is a limit of \(L\) from \(C\) --- then for any degree distribution \(\vec{d}\), the line bundle \(L_{\vec{d}}\) is again a limit of \(L\).  If \(h^0(C, L) \geq r+1\), then \(h^0(X, L_{\vec{d}}) \geq r+1\) by the semicontinuity theorem.  Hence, if \(LW^r_dX = \emptyset\), then \(W^r_dC = \emptyset\) as well.  This will be our technique for proving the Brill--Noether Nonexistence Theorem.
The key definition is the following.

\begin{defin}
Let \(\vec{L}\) be an \(r\)-positive limit line bundle of degree \(d\).  For each \(1 \leq i \leq g-1\) and \(0 \leq n \leq r\) define
\(a^i_n \colonequals a^i_n(\vec{L})\) to be the maximum \(\alpha \in \zz\) such that for all \(\vec{d}\) with \(|\vec{d}| = d\) and with \(\sum_{j=1}^i d^j = \alpha\), we have 
\[h^0(X^{>i}, L_{\vec{d}}|_{X^{>i}}) \geq r+1-n.\]
\end{defin}

\begin{lem}\label{lem:equality}
Let \(\vec{L}\) be an \(r\)-positive limit line bundle of degree \(d\).  Then there exists a degree distribution \(\vec{d}\) with \(|\vec{d}| = d\) and  \(\sum_{j=1}^i d^j = a^i_n(\vec{L})\) such that 
\[h^0(X^{>i}, L_{\vec{d}}|_{X^{>i}})  = r+ 1 - n, \qquad \text{and} \qquad  \ev_{p^i} \colon H^0(X^{>i}, L_{\vec{d}}|_{X^{>i}}) \to k \text{ is surjective.}\]
\end{lem}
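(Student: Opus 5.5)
The plan is to exploit the maximality in the definition of \(a^i_n\): compare the degree distribution realizing the failure at \(\alpha+1\) with a distribution at \(\alpha\), via a twist by \(p^i\) on \(E^{i+1}\).

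\textbf{Step 1: the restriction depends only on the tail degrees.} I first check that \(L_{\vec{d}}|_{X^{>i}}\) is determined by \((d^{i+1},\dots,d^g)\). Two distributions with the same degrees on \(X^{>i}\) differ by chip firings of components \(E^j\) with \(j<i\), together with firings that redistribute degree only among \(E^1,\dots,E^i\). By \eqref{eq:twist}, firing \(E^j\) changes only \(E^{j-1}, E^j, E^{j+1}\). So firings with \(j<i\) never touch \(X^{>i}\). (The precise bookkeeping uses the uniqueness of the limit with a given degree distribution.)

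\textbf{Step 2: the key twist.} Set \(\alpha=a^i_n\). By maximality, there is \(\vec{d}'\) with \(|\vec{d}'|=d\), \(\sum_{j\le i}d'^j=\alpha+1\), and
\[h^0(X^{>i}, M)\le r-n, \qquad M\colonequals L_{\vec{d}'}|_{X^{>i}}.\]
Twist \(L_{\vec{d}'}\) by the divisor \(E^1+\cdots+E^i\) on the total space. By \eqref{eq:twist} the contributions telescope: the degree on \(E^i\) drops by \(1\), the degree on \(E^{i+1}\) rises by \(1\), and all other degrees are unchanged. So the resulting distribution \(\vec{d}\) has \(\sum_{j\le i}d^j=\alpha\). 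On \(X^{>i}\) the only change is the twist by \(p^i\) on \(E^{i+1}\), that is,
\[L_{\vec{d}}|_{X^{>i}}\simeq M(p^i).\]

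\textbf{Step 3: conclude.} Since \(\sum_{j\le i} d^j=\alpha\), the definition of \(a^i_n\) gives \(h^0(X^{>i},M(p^i))\ge r+1-n\). On the other hand, the kernel of
\[\ev_{p^i}\colon H^0(X^{>i},M(p^i))\to k\]
is exactly \(H^0(X^{>i},M)\), because the sections vanishing at the smooth point \(p^i\in E^{i+1}\) are precisely the sections of \(M\). This kernel has dimension at most \(r-n\). Hence \(h^0(M(p^i))\le r-n+1\), which forces
\[h^0\bigl(X^{>i},L_{\vec{d}}|_{X^{>i}}\bigr)=r+1-n,\]
and the kernel is a proper subspace. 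So \(\ev_{p^i}\) is nonzero, hence surjective onto the one-dimensional target.

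\textbf{Main obstacle.} I expect the only delicate point to be the bookkeeping in Step 2: verifying from \eqref{eq:twist}, together with \(\O_{\C}(E^1+\cdots+E^g)\simeq\O_{\C}\), that firing \(E^1+\cdots+E^i\) changes the restriction to \(X^{>i}\) exactly by \(\O_{E^{i+1}}(p^i)\). A smaller point is that \(a^i_n\) is a finite maximum, so that the "\(\alpha+1\) fails" step applies. Finiteness holds because \(n\le r\) and \(r\)-positivity give \(h^0(X^{>i},L_{\vec{d}}|_{X^{>i}})\ge r+1-n\) for \(\alpha\le 0\). Boundedness above holds because for \(\alpha>d\) the total degree on \(X^{>i}\) is negative, which I would argue forces \(h^0(X^{>i},\cdot)=0\).
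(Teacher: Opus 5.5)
Your argument is correct and is essentially the paper's proof run directly rather than by contradiction: the paper notes that if every distribution at \(\alpha = a^i_n\) had \(h^0 > r+1-n\) or \(\ev_{p^i} = 0\), then \(h^0(X^{>i}, L_{\vec{d}}|_{X^{>i}}(-p^i)) \geq r+1-n\) for all of them, so firing \(X^{>i}\) would make \(\alpha+1\) admissible; you instead start from a failing distribution at \(\alpha+1\) and twist it back up by \(p^i\) via \(E^1+\cdots+E^i\), which uses the same correspondence. The one inaccuracy is in your side remark on finiteness, which the paper takes for granted: negative total degree on \(X^{>i}\) does not force \(h^0 = 0\) for every distribution, since a single component of large positive degree can still carry sections. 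It suffices, however, to exhibit one distribution with \(h^0=0\), for example the one with tail degrees \((0,\dots,0,d-\alpha)\).
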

\begin{proof}
If either \(h^0(X^{>i}, L_{\vec{d}}|_{X^{>i}})  > r+ 1 - n\) or \(\ev_{p^i} = 0\), then \(h^0(X^{>i}, L_{\vec{d}}|_{X^{>i}}(-p^i))  \geq  r+ 1 - n\).  Thus, if one of these always holds when \(\sum_{j=1}^i d^j = \alpha\), we can increase \(\alpha\) by one (which corresponds to firing all of \(X^{>i}\) which has the effect of twisting down by \(p^i\) on \(X^{>i}\) and twisting up by \(p^i\) on \(X^{\leq i}\)), contradicting the maximality of \(a^i_n(\vec{L})\).
\end{proof}

\begin{cor}\label{cor:increasing}
The sequence \(a^i_0(\vec{L}), a^i_1(\vec{L}), \dots, a^i_r(\vec{L})\) is strictly increasing. 
\end{cor}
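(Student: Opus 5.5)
The plan is to show \(a^i_{n+1}(\vec{L}) \geq a^i_n(\vec{L}) + 1\) for each \(0 \leq n < r\) by exhibiting, for every degree distribution with first-\(i\) sum equal to \(a^i_n + 1\), enough sections on \(X^{>i}\). It is convenient first to note the monotonicity built into the definition: if \(\alpha\) satisfies the defining condition for index \(n\), then so does every \(\alpha' \leq \alpha\). Indeed, decreasing \(\alpha\) by one corresponds to firing \(X^{\leq i}\), which twists \(L_{\vec{d}}|_{X^{>i}}\) up by \(p^i\), and \(h^0\) does not decrease. Since the line bundle on \(X^{>i}\) depends only on \(\vec{d}\) through the chip firing, one can phrase everything in terms of \(M_\beta\), the restriction to \(X^{>i}\) of a limit whose first-\(i\) sum is \(\beta\). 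Here I interpret the condition as quantified over all \(\vec{d}\) with that partial sum, so there are various \(M\)'s, and passing from \(\beta\) to \(\beta+1\) means \(M\mapsto M(-p^i)\).

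The first step is to apply Lemma~\ref{lem:equality} at level \(n\). This gives a distribution \(\vec{d}\) with partial sum \(a^i_n\) such that \(V \colonequals H^0(X^{>i}, L_{\vec{d}}|_{X^{>i}})\) has dimension exactly \(r+1-n\) and \(\ev_{p^i}\) is surjective on \(V\). The kernel of \(\ev_{p^i}\) is therefore \(H^0(X^{>i}, L_{\vec{d}}|_{X^{>i}}(-p^i))\), which has dimension \(r-n = r+1-(n+1)\). That restriction is the bundle on \(X^{>i}\) for the distribution obtained by firing \(X^{>i}\), whose partial sum is \(a^i_n+1\).

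This handles one distribution, but the definition of \(a^i_{n+1}\) requires the bound for all distributions with partial sum \(a^i_n+1\). The cleaner route is to argue directly. Take any \(\vec{d}'\) with partial sum \(a^i_n+1\), and let \(\vec{d}''\) be obtained from it by firing \(X^{\leq i}\), so \(\vec{d}''\) has partial sum \(a^i_n\). Then \(L_{\vec{d}''}|_{X^{>i}} = L_{\vec{d}'}|_{X^{>i}}(p^i)\). By the definition of \(a^i_n\), this bundle has at least \(r+1-n\) sections. Sections vanishing at \(p^i\) form a subspace of codimension at most one, and these are exactly the sections of \(L_{\vec{d}'}|_{X^{>i}}\). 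Hence \(h^0(X^{>i}, L_{\vec{d}'}|_{X^{>i}}) \geq r-n\), so \(\alpha = a^i_n+1\) satisfies the condition for \(n+1\), and therefore \(a^i_{n+1} \geq a^i_n+1\).

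The main obstacle is the bookkeeping about which distributions share a given restriction to \(X^{>i}\). Firing components inside \(X^{\leq i}\) other than the whole of \(X^{\leq i}\) does not change the partial sum and leaves the restriction to \(X^{>i}\) unchanged, except through the node \(p^i\), where only firing \(E^i\) matters. So one must check that firing \(X^{\leq i}\) is well defined on every \(\vec{d}'\) and lands in a valid distribution, which it does since degrees may be negative. One must also check that the maxima are finite, which follows from degree bounds on \(X^{>i}\). Once this is settled, the codimension-one argument finishes the proof, and Lemma~\ref{lem:equality} is not even needed for this corollary.
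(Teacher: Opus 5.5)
Your argument is correct, but it runs in a different direction from the paper's. The paper gives no separate proof and presents the statement as an immediate consequence of Lemma~\ref{lem:equality}. The intended deduction has two parts:
\begin{itemize}
\item The condition defining \(a^i_n\) (at least \(r+1-n\) sections) implies the one defining \(a^i_{n+1}\), so \(a^i_n \leq a^i_{n+1}\).
\item Lemma~\ref{lem:equality} at index \(n+1\) produces a distribution with partial sum \(a^i_{n+1}\) having \emph{exactly} \(r-n\) sections on \(X^{>i}\). So the index-\(n\) condition fails at \(a^i_{n+1}\), and hence \(a^i_n \neq a^i_{n+1}\).
\end{itemize}

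You instead prove the lower bound \(a^i_{n+1} \geq a^i_n + 1\) directly. Firing \(X^{\leq i}\) converts any distribution with partial sum \(a^i_n+1\) into one with partial sum \(a^i_n\), whose restriction to \(X^{>i}\) is twisted up by \(p^i\). Imposing vanishing at the smooth point \(p^i \in E^{i+1}\) then costs at most one section.

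This is the same codimension-one observation that drives the proof of Lemma~\ref{lem:equality}, run in the opposite direction. Your version is more self-contained: it uses only that \(a^i_n\) itself satisfies the defining condition, not the existence of an extremal distribution. The paper's version costs nothing once Lemma~\ref{lem:equality} is available, and the paper needs that lemma anyway for Lemma~\ref{lem:lls}.

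One small correction to your bookkeeping paragraph: firing a proper subset of \(X^{\leq i}\) that contains \(E^i\) does change the partial sum, since it sends a chip across \(p^i\). This does not affect your proof, which only fires all of \(X^{\leq i}\).
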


\begin{example}
Returning to our running example of a \(2\)-positive limit line bundle whose aspects were computed in Example~\ref{ex:running_aspects}:
\[\vec{L} = (\O_{E^1}(4p^1), \O_{E^2}(2p^1 + 2p^2), \O_{E^3}(4p^2)) \in LW^2_4(X),\]
we will compute the sequence \(a^2_0(\vec{L}), a^2_1(\vec{L}), a^2_2(\vec{L})\).  By definition, we must consider degree distributions \(\vec{d}\) that have degree \(\alpha\) on \(E^1 \cup E^2\) and hence have degree \(d-\alpha\) on \(E^3\).  Since \(E^3\) is a genus \(1\) curve, for any such \(\vec{d}\), we have 
\begin{equation}\label{eq:h0_g1}
h^0(E^3, L_{\vec{d}}|_{E^3}) = \begin{cases} d-\alpha & : \ d-\alpha \geq 1, \\
1 & : \ L_{\vec{d}}|_{E^3} \simeq \O_{E^3}, \\
0 & : \text{ else.} \end{cases}
\end{equation}
Moreover, by the chip-firing procedure \eqref{eq:twist} for changing the degree distribution, we have \(L_{\vec{d}}|_{E^3} \simeq \O_{E^3}((d-\alpha)p^2)\).  Hence, combining \eqref{eq:h0_g1} with the definition of \(a^i_n(\vec{L})\) we have
\[a^2_0(\vec{L}) = 1, \qquad a^2_1(\vec{L}) = 2, \qquad a^2_2(\vec{L}) = 4.\]
\hfill\(\righthalfcup\)
\end{example}

\begin{defin}
Given an \(r\)-positive limit line bundle \(\vec{L}\) of degree \(d\), for \(1 \leq i \leq g-1\) and for \(0 \leq n \leq r\) we define
\begin{equation}\label{eq:b_complement}
b^i_n \colonequals b^i_n(\vec{L}) = d - a^i_{r-n}(\vec{L}).
\end{equation}
\end{defin}

Observe that Corollary~\ref{cor:increasing} guarantees that the \(b^i_0(\vec{L}), \dots, b^i_r(\vec{L})\) is an increasing sequence as well.  
It is useful to extend the definition of \(a^i_n(\vec{L})\) to \(i=0\) and the definition of \(b^i_n(\vec{L})\) to \(i=g\) by declaring \[a^0_n(\vec{L}) \colonequals n, \qquad b^g_n(\vec{L}) \colonequals n.\]

\begin{example}
For our running example \(\vec{L}\) from Example~\ref{ex:running_aspects},
the sequences \(a^i_n(\vec{L})\) and \(b^i_m(\vec{L})\) are given in the diagram below.  We record these invariants in increasing order with \(a^i_n\) to the right of the node \(p^i\) and \(b^i_m\) to the left of the node \(p^i\):
\begin{center}
\begin{tikzpicture}
\draw (0, 0) .. controls (1, -1) and (2, -1) .. (3, 0);
\draw (2, 0) .. controls (3, -1) and (4, -1) .. (5, 0);
\draw (4, 0) .. controls (5, -1) and (6, -1) .. (7, 0);
\filldraw (0.5, -0.42) circle[radius=0.03];
\filldraw (2.5, -0.42) circle[radius=0.03];
\filldraw (4.5, -0.42) circle[radius=0.03];
\filldraw (6.5, -0.42) circle[radius=0.03];
\draw (0.5, -0.75) node{$p^0$};
\draw (2.5, -0.75) node{$p^1$};
\draw (4.5, -0.75) node{$p^2$};
\draw (6.5, -0.75) node{$p^3$};
\draw (1.5, -0.4) node{$E^1$};
\draw (3.5, -0.4) node{$E^2$};
\draw (5.5, -0.4) node{$E^3$};
\draw (0.5, -1.25) node[right]{{\color{violet}\(0\)}};
\draw (0.5, -1.75) node[right]{{\color{violet}\(1\)}};
\draw (0.5, -2.25) node[right]{{\color{violet}\(2\)}};
\draw (2.5, -1.25) node[right]{{\color{violet}\(0\)}};
\draw (2.5, -1.75) node[right]{{\color{violet}\(2\)}};
\draw (2.5, -2.25) node[right]{{\color{violet}\(3\)}};
\draw (4.5, -1.25) node[right]{{\color{violet}\(1\)}};
\draw (4.5, -1.75) node[right]{{\color{violet}\(2\)}};
\draw (4.5, -2.25) node[right]{{\color{violet}\(4\)}};
\draw (2.5, -1.25) node[left]{{\color{blue}\(1\)}};
\draw (2.5, -1.75) node[left]{{\color{blue}\(2\)}};
\draw (2.5, -2.25) node[left]{{\color{blue}\(4\)}};
\draw (4.5, -1.25) node[left]{{\color{blue}\(0\)}};
\draw (4.5, -1.75) node[left]{{\color{blue}\(2\)}};
\draw (4.5, -2.25) node[left]{{\color{blue}\(3\)}};
\draw (6.5, -1.25) node[left]{{\color{blue}\(0\)}};
\draw (6.5, -1.75) node[left]{{\color{blue}\(1\)}};
\draw (6.5, -2.25) node[left]{{\color{blue}\(2\)}};
\draw (-1.5, -1.25) node[right]{{\color{violet}\(a^i_0\)}};
\draw (-1.5, -1.75) node[right]{{\color{violet}\(a^i_1\)}};
\draw (-1.5, -2.25) node[right]{{\color{violet}\(a^i_2\)}};
\draw (-1.5, -1.25) node[left]{{\color{blue}\(b^i_0\)}};
\draw (-1.5, -1.75) node[left]{{\color{blue}\(b^i_1\)}};
\draw (-1.5, -2.25) node[left]{{\color{blue}\(b^i_2\)}};
\end{tikzpicture}
\end{center}
By definition complementary \(b^i_n\) and \(a^i_{2-n}\) across a node \(p^i\) add to \(d=4\). \hfill\(\righthalfcup\)
\end{example}

The following is the key lemma in our proof of the Brill-Noether Nonexistence Theorem.

\begin{lem}\label{lem:lls}
If \(\vec{L} \in LW^r_dX\), then
\[h^0(E^i, L^i(-a^{i-1}_np^{i-1} - b^i_m p^i)) \geq r+1 - n - m.\]
\end{lem}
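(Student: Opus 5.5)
We choose a single degree distribution \(\vec{d}\) adapted to both invariants \(a^{i-1}_n\) and \(b^i_m\). We arrange that the restriction of \(L_{\vec{d}}\) to \(E^i\), twisted down at \(p^i\), is exactly \(L^i(-a^{i-1}_np^{i-1}-b^i_mp^i)\). We then compare sections on \(X^{\geq i}\) with sections on \(X^{>i}\). The two inputs are the defining (minimal) property of \(a^{i-1}_n\), which gives many sections on \(X^{\geq i}\), and the maximality of \(a^i_{r-m}\), which gives few sections on \(X^{>i}\).

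\textbf{Step 1: the right tail.} Put \(\alpha = a^i_{r-m}(\vec{L})\), so \(b^i_m = d-\alpha\). By maximality of \(a^i_{r-m}\), there is a degree distribution \(\vec{d}'\) with \(\sum_{j\le i} d'^j = \alpha+1\) and
\[h^0(X^{>i}, L_{\vec{d}'}|_{X^{>i}}) \leq r+1-(r-m)-1 = m.\]
The tail \((d'^{i+1},\dots,d'^g)\) has total degree \(b^i_m-1\). Next we observe, via the chip-firing formula \eqref{eq:twist}, a fact about the restriction \(L_{\vec{d}}|_{E^j}\). It equals \(L^j\) twisted down by (degree to the left of \(E^j\))\(\cdot p^{j-1}\) and by (degree to the right of \(E^j\))\(\cdot p^j\). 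Hence \(L_{\vec{d}}|_{X^{>i}}\) depends only on the entries \(d^{i+1},\dots,d^g\).

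\textbf{Step 2: the left part.} Now pick any \(d^1,\dots,d^{i-1}\) with sum \(a^{i-1}_n\). Keep \(d^j=d'^j\) for \(j>i\), and let \(d^i\) be whatever makes the total equal to \(d\). By Step 1 we still have \(h^0(X^{>i},L_{\vec{d}}|_{X^{>i}})\le m\). By the definition of \(a^{i-1}_n\) (which applies to every distribution with left sum \(a^{i-1}_n\)), we have \(h^0(X^{\geq i}, L_{\vec{d}}|_{X^{\geq i}})\geq r+1-n\). Moreover, \(L_{\vec{d}}|_{E^i}\simeq L^i(-a^{i-1}_np^{i-1}-(b^i_m-1)p^i)\).

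\textbf{Step 3: the restriction map.} Consider restriction \(H^0(X^{\geq i},L_{\vec{d}}) \to H^0(X^{>i},L_{\vec{d}})\). Its image has dimension at most \(m\). Its kernel consists of sections supported on \(E^i\) that vanish at the node \(p^i\), by the gluing condition at \(p^i\). So the kernel is
\[H^0\bigl(E^i, L_{\vec{d}}|_{E^i}(-p^i)\bigr) = H^0\bigl(E^i, L^i(-a^{i-1}_np^{i-1}-b^i_mp^i)\bigr).\]
Therefore this space has dimension at least \((r+1-n)-m\), which is the claim.

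The boundary cases \(i=1\) and \(i=g\) use the conventions \(a^0_n=n\) and \(b^g_m=m\). For \(i=1\) there is no left part. Here we start from \(h^0(X,L_{\vec{d}})\geq r+1\), which holds by \(r\)-positivity, and twist down by \(np^0\) on \(E^1\). Imposing vanishing to order \(n\) at a smooth point costs at most \(n\) sections, giving \(\ge r+1-n\). For \(i=g\), the right tail is empty, and we instead impose vanishing to order \(m\) at \(p^g\), losing at most \(m\) sections.

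The main obstacle is the bookkeeping in Steps 1–2. We must check carefully from \eqref{eq:twist} that the restriction to \(X^{>i}\) really depends only on the tail degrees. We must also check that the twist on \(E^i\) comes out exactly as \(-a^{i-1}_np^{i-1}-(b^i_m-1)p^i\), with the right signs and the off-by-one at \(p^i\). The extra \(-p^i\) coming from the gluing condition is precisely what converts \(b^i_m-1\) into \(b^i_m\).
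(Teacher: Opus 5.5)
Your proof is correct and is essentially the paper's argument. You fix the left degree at \(a^{i-1}_n\), take a tail that is extremal for \(a^i_{r-m}\), and compare sections on \(X^{\geq i}\) with sections on \(X^{>i}\). The differences are minor: you use the maximality witness at tail degree \(b^i_m-1\) (at most \(m\) sections) with a kernel/image count, where the paper invokes Lemma~\ref{lem:equality} (tail degree \(b^i_m\), exactly \(m+1\) sections, surjective evaluation at \(p^i\)) and counts the gluing condition. You also handle \(i=1\) and \(i=g\) explicitly, which the paper's proof leaves implicit in the conventions \(a^0_n=n\) and \(b^g_m=m\).
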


\begin{rem}
The notation \(a^i_0, \dots, a^i_r\) may remind you of the vanishing sequence of a linear series at a point, and this is not an accident!
Readers familiar with the theory of limit linear series will see that Lemma~\ref{lem:lls} guarantees that for any \(r\)-positive limit line bundle \(\vec{L}\), there exists a refined limit linear series with vanishing sequences \(a^i_n, b^i_m\) and underlying aspect line bundles \(\vec{L}\). 
\end{rem}

\begin{proof}[Proof of Lemma~\ref{lem:lls}]
By Lemma~\ref{lem:equality},  there exists a degree distribution \(\vec{d}_+\) on \(X\) with \(\sum_{j=i+1}^g d_+^j = d - a^i_{r-m} = b^i_m\) for which 
\begin{equation}\label{eq:h0_ip1}
h^0(X^{>i}, L_{\vec{d}_+}|_{X^{>i}}) = r+1 - (r-m) = m+1, \qquad \text{and}\qquad \text{\(\ev_{p_i}\)  is surjective.}
\end{equation}
By definition of \(a^{i-1}_n\), for all degree distributions \(\vec{d}\) with total degree \(a^{i-1}_n\) on \(X^{\leq i-1}\) --- in particular those that agree with \(\vec{d}_+\) on \(X^{>i}\) --- we have 
\begin{equation}\label{eq:h0_i}
h^0(X^{>{i-1}}, L_{\vec{d}}|_{X^{>i-1}}) \geq r+1 - n.
\end{equation}
Let \(\vec{d}\) be such a degree distribution with \(\sum_{j=1}^{i-1} d^j = a^{i-1}_n\) and which agrees with \(\vec{d}_+\) when restricted to \(X^{>i}\).  Since this forces \( \sum_{j=i+1}^g d^j = b^i_m\), we conclude that \(d^i = d - a^{i-1}_n - b^i_m\).  Hence, by the procedure for converting between the aspects and a limit with any given degree distribution \eqref{eq:twist}, we have
\[ L_{\vec{d}}|_{E^i} \simeq L^i(-a^{i-1}_n p^{i-1} - b^i_m p^i).\]
In order for \eqref{eq:h0_ip1} and \eqref{eq:h0_i} to both hold, the difference must be made up by sections on \(E^i\), and we have
\[ h^0(E^i, L_{\vec{d}}|_{E^i}) = h^0(E^i, L^i(-a^{i-1}_np^{i-1} - b^i_m p^i)) \geq (r + 1 - n) - (m+1) + 1, \]
where the last \(+1\) is for the agreement condition at \(p^i\).
\end{proof}

In the proof of Brill--Noether Nonexistence, we will apply Lemma~\ref{lem:lls} via the following easy corollary.

\begin{cor}\label{cor:across_comp}
If \(\vec{L} \in LW^r_dX\), then
\(h^0(E^i, L^i(-a^{i-1}_np^{i-1} - b^i_{r-n} p^i)) \geq 1\).
\end{cor}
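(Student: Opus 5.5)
This is the special case \(m = r-n\) of Lemma~\ref{lem:lls}, so the plan is simply to substitute. Fix \(1 \leq i \leq g\) and \(0 \leq n \leq r\), and set \(m \colonequals r-n\), which also lies in \(\{0, \dots, r\}\). Lemma~\ref{lem:lls} then gives
\[h^0\big(E^i, L^i(-a^{i-1}_n p^{i-1} - b^i_{r-n} p^i)\big) \geq r+1-n-(r-n) = 1,\]
which is exactly the claim.

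The only point needing attention is that the indices stay in the range where the invariants are defined. For \(i=1\) we use the convention \(a^0_n = n\), and for \(i = g\) we use \(b^g_{r-n} = r-n\). Since the proof of Lemma~\ref{lem:lls} is stated for all \(i\), I take it to cover these boundary conventions.

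If one prefers to check the ends directly, each is a short computation on a single elliptic curve. For \(i=1\), take the distribution realizing \(b^1_{r-n}\) from Lemma~\ref{lem:equality} and put all remaining degree on \(E^1\). The sections on \(X^{>1}\) form a space of dimension \(r-n+1\) surjecting at \(p^1\), while \(r\)-positivity forces \(h^0 \geq r+1\) on all of \(X\). The difference must come from sections on \(E^1\) vanishing at \(p^1\) to the appropriate order, and this yields the bound with \(a^0_n = n\).

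I expect no real obstacle. The only care needed is confirming that Lemma~\ref{lem:lls} applies at the boundary indices \(i=1\) and \(i=g\) with the extended conventions.
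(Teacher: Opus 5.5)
Your proposal is correct and is the paper's argument: the corollary is Lemma~\ref{lem:lls} with \(m = r-n\), which gives the bound \(r+1-n-(r-n) = 1\). Your check of the boundary indices \(i=1\) (using \(a^0_n = n\)) and \(i=g\) is a worthwhile addition, since the paper's proof of the lemma relies on those conventions without verifying them.
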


In words, Corollary~\ref{cor:across_comp} says that the \(i\)th aspect line bundle \(L^i\) has a nonzero section that simultaneously vanishes to order \(a^{i-1}_n\) and \(p^{i-1}\) and to order \(b^i_{r-n}\) at \(p^i\).  Since \(L^i\) has degree \(d\), this immediately implies:

\begin{cor}\label{cor:star}
Let \(\vec{L}\) be an \(r\)-positive line bundle of degree \(d\).  Then, for all \(1 \leq i\leq g\) and all \(0 \leq n \leq r\), we have \(a^{i-1}_n + b^i_{r-n} \leq d\), and if equality holds then
\begin{equation}\label{eq:star}\tag{\(\bigstar^i_n\)}
L^i \simeq \O_{E^i}(a^{i-1}_n p^{i-1} + b^i_{r-n}p^i).
\end{equation}
\end{cor}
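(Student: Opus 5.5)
The plan is to derive this directly from Corollary~\ref{cor:across_comp}, using only the fact that each \(E^i\) is an elliptic curve. By that corollary, the line bundle \(M \colonequals L^i(-a^{i-1}_np^{i-1} - b^i_{r-n}p^i)\) on \(E^i\) has a nonzero global section. Since \(L^i\) has degree \(d\), we get \(\deg M = d - a^{i-1}_n - b^i_{r-n}\). A line bundle of negative degree on a curve has no nonzero sections, so \(\deg M \geq 0\), which is exactly the inequality \(a^{i-1}_n + b^i_{r-n} \leq d\).

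For the equality case, suppose \(a^{i-1}_n + b^i_{r-n} = d\). Then \(M\) has degree \(0\) and a nonzero section \(s\). The divisor of \(s\) is effective of degree \(0\), hence zero, so \(s\) is nowhere vanishing and \(M \simeq \O_{E^i}\). Untwisting gives \(L^i \simeq \O_{E^i}(a^{i-1}_n p^{i-1} + b^i_{r-n}p^i)\), which is \eqref{eq:star}.

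The only point needing care is the boundary indices \(i=1\) and \(i=g\), where \(a^0_n = n\) and \(b^g_m = m\) are defined by convention rather than by the chip-firing definition. Corollary~\ref{cor:across_comp} is stated for these \(i\) as well, so we invoke it there too. If one wants to check it directly, the argument of Lemma~\ref{lem:lls} goes through with \(X^{\leq 0}\) or \(X^{>g}\) empty: a section vanishing to order \(n\) at \(p^0\) imposes at most \(n\) conditions, and similarly at \(p^g\). Beyond that the statement is immediate, and I expect no real obstacle.
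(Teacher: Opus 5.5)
Your proposal is correct and is essentially the paper's argument: the paper deduces this corollary immediately from Corollary~\ref{cor:across_comp}. It observes that a nonzero section of the degree-\(d\) bundle \(L^i\) vanishing to orders \(a^{i-1}_n\) at \(p^{i-1}\) and \(b^i_{r-n}\) at \(p^i\) forces the inequality, and in the equality case pins down its divisor.

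Your extra check of the boundary cases \(i=1\) and \(i=g\) is a correct and worthwhile supplement, since the paper's proof of Lemma~\ref{lem:lls} does not literally cover them. There the conventions \(a^0_n = n\) and \(b^g_m = m\) give the bound by imposing at most \(n\) (resp.\ \(m\)) vanishing conditions.
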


The condition~\eqref{eq:star} is important since it \textit{determines} the aspect \(L^i\); the fact that this can only occur once per component (since \(p^{i-1}\) and \(p^i\) are assumed to be general, and so do not satisfy any linear relations) is the key to the proof of Brill--Noether Nonexistence.

\begin{proof}[Proof of Brill--Noether Nonexistence Theorem~\ref{thm:BN_nonexistence}]
Assume that \((g, r, d)\) is such that \(\rho(g, r,d ) =  g - (r+1)(g-d+r) < 0 \).
The semicontinuity theorem implies that it is enough to prove that \(LW^r_dX = \emptyset\).  Suppose to the contrary that \(\vec{L} \in LW^r_dX\) is an \(r\)-positive limit line bundle on \(X\) of degree \(d\).
Let \(a^i_n \colonequals a^i_n(\vec{L})\) and \(b^i_m \colonequals b^i_m(\vec{L})\).  We can pictorially arrange this as below.

\begin{center}
\begin{tikzpicture}
\draw (0, 0) .. controls (1, -1) and (2, -1) .. (3, 0);
\draw (2, 0) .. controls (3, -1) and (4, -1) .. (5, 0);
\draw (4, 0) .. controls (5, -1) and (6, -1) .. (7, 0);
\filldraw (0.5, -0.42) circle[radius=0.03];
\filldraw (2.5, -0.42) circle[radius=0.03];
\filldraw (4.5, -0.42) circle[radius=0.03];
\draw (0.5, -0.75) node{$p^0$};
\draw (2.5, -0.75) node{$p^1$};
\draw (4.5, -0.75) node{$p^2$};
\draw (1.5, -0.4) node{$E^1$};
\draw (3.5, -0.4) node{$E^2$};
\draw (5.5, -0.4) node{$E^3$};
\filldraw (7.8, -0.5) circle[radius=0.02];
\filldraw (7.5, -0.5) circle[radius=0.02];
\filldraw (7.2, -0.5) circle[radius=0.02];
\draw (8, 0) .. controls (9, -1) and (10, -1) .. (11, 0);
\draw (10, 0) .. controls (11, -1) and (12, -1) .. (13, 0);
\filldraw (10.5, -0.42) circle[radius=0.03];
\filldraw (12.5, -0.42) circle[radius=0.03];
\draw (10.5, -0.8) node{$p^{g - 1}$};
\draw (12.5, -0.8) node{$p^g$};
\draw (9.5, -0.4) node{$E^{g - 1}$};
\draw (11.5, -0.4) node{$E^g$};
\draw (0.5, -1.25) node[right]{{\color{violet}\(0\)}};
\draw (0.5, -2) node[right]{{\color{violet}\(1\)}};
\draw (0.5, -2.75) node[right]{{\color{violet}\(\vdots\)}};
\draw (0.2, -3.5) node[right]{{\color{violet}\(r-1\)}};
\draw (0.5, -4.25) node[right]{{\color{violet}\(r\)}};
\draw (2.5, -1.25) node[right]{{\color{violet}\(a^1_0\)}};
\draw (2.5, -2) node[right]{{\color{violet}\(a^1_1\)}};
\draw (2.5, -2.75) node[right]{{\color{violet}\(\vdots\)}};
\draw (2.5, -3.5) node[right]{{\color{violet}\(a^1_{r-1}\)}};
\draw (2.5, -4.25) node[right]{{\color{violet}\(a^1_r\)}};
\draw (4.5, -1.25) node[right]{{\color{violet}\(a^2_0\)}};
\draw (4.5, -2) node[right]{{\color{violet}\(a^2_1\)}};
\draw (4.5, -2.75) node[right]{{\color{violet}\(\vdots\)}};
\draw (4.5, -3.5) node[right]{{\color{violet}\(a^2_{r-1}\)}};
\draw (4.5, -4.25) node[right]{{\color{violet}\(a^2_r\)}};
\draw (2.5, -1.25) node[left]{{\color{blue}\(b^1_0\)}};
\draw (2.5, -2) node[left]{{\color{blue}\(b^1_1\)}};
\draw (2.5, -2.75) node[left]{{\color{blue}\(\vdots\)}};
\draw (2.5, -3.5) node[left]{{\color{blue}\(b^1_{r-1}\)}};
\draw (2.5, -4.25) node[left]{{\color{blue}\(b^1_r\)}};
\draw (4.5, -1.25) node[left]{{\color{blue}\(b^2_0\)}};
\draw (4.5, -2) node[left]{{\color{blue}\(b^2_1\)}};
\draw (4.5, -2.75) node[left]{{\color{blue}\(\vdots\)}};
\draw (4.5, -3.5) node[left]{{\color{blue}\(b^2_{r-1}\)}};
\draw (4.5, -4.25) node[left]{{\color{blue}\(b^2_r\)}};
\draw (10.5, -1.25) node[right]{{\color{violet}\(a^{g-1}_0\)}};
\draw (10.5, -2) node[right]{{\color{violet}\(a^{g-1}_1\)}};
\draw (10.5, -2.75) node[right]{{\color{violet}\(\vdots\)}};
\draw (10.5, -3.5) node[right]{{\color{violet}\(a^{g-1}_{r-1}\)}};
\draw (10.5, -4.25) node[right]{{\color{violet}\(a^{g-1}_r\)}};
\draw (10.5, -1.25) node[left]{{\color{blue}\(b^{g-1}_0\)}};
\draw (10.5, -2) node[left]{{\color{blue}\(b^{g-1}_1\)}};
\draw (10.5, -2.75) node[left]{{\color{blue}\(\vdots\)}};
\draw (10.5, -3.5) node[left]{{\color{blue}\(b^{g-1}_{r-1}\)}};
\draw (10.5, -4.25) node[left]{{\color{blue}\(b^{g-1}_r\)}};
\draw (12.5, -1.25) node[left]{{\color{blue}\(0\)}};
\draw (12.5, -2) node[left]{{\color{blue}\(1\)}};
\draw (12.5, -2.75) node[left]{{\color{blue}\(\vdots\)}};
\draw (12.8, -3.5) node[left]{{\color{blue}\({r-1}\)}};
\draw (12.5, -4.25) node[left]{{\color{blue}\(r\)}};
\draw [decorate,
    decoration = {brace,mirror}, thick] (9.5,-5) --  (11.5,-5);
\draw (10.5, -5) node[below]{{\footnotesize across a node,}};
\draw (10.5, -5.3) node[below]{{\footnotesize \(a^i_n + b^i_{r-n} = d\)}};
\draw [decorate,
    decoration = {brace,mirror}, thick] (2.5,-5) --  (4.5,-5);
\draw (3.5, -5) node[below]{{\footnotesize across a component,}};
\draw (3.5, -5.3) node[below]{{\footnotesize \(a^{i-1}_n + b^i_{r-n} \leq d\)}};
\end{tikzpicture}
\end{center}

For fixed \(n\) between \(0\) and \(r\), considering the sum of complementary terms of the vanishing sequences at \(p^{i-1}\) and at \(p^i\) over all components, Corollary~\ref{cor:star} and \eqref{eq:b_complement} imply:
\begin{align*}
g(d-1) + \#\{i  : \eqref{eq:star} \text{ holds}\} &\geq \sum_{i=1}^g a^{i-1}_n + b^i_{r-n} \\
&= n + \left( \sum_{i=1}^{g-1} b^i_{r-n} + a^i_n\right) + (r- n)\\
&= r + (g-1)d.
\end{align*}
In other words, for fixed \(n\), the number of components on which \eqref{eq:star} holds --- and the aspect \(L^i\) is determined --- is at least \(g - d + r\).  Thus, considering all \(0 \leq n \leq r\), we see that this determines \((r+1)(g-d+r)\) of the aspects (again, because there cannot be any overlaps since \(p^{i-1}\) and \(p^i\) satisfy no nontrivial linear relations on \(E^i\)).  Since there are only \(g\) aspects in total, we see that \(g \geq (r+1)(g-d+r)\), in other words \(\rho(g, r, d) \geq 0\).
\end{proof}

\subsection{Embedded degeneration and deformation theory}\label{sec:BN_exist}
In this setting we begin not with an abstract curve \(C\), but rather with a curve \(C \hookrightarrow \pp^r\) already equipped with a projective embedding.  We then make use of degenerations of \(C\) in the projective space \(\pp^r\).
To illustrate this technique we will prove the following version of the Brill--Noether Existence Theorem.

\begin{thm}[Brill--Noether Existence]\label{thm:BN_existence}

Suppose that \(\rho(g, r, d) = g - (r+1)(g-d+r) \geq 0\) and \(r \geq 3\).  Then there exists a component \(\M_g(\pp^r, d)^{\BN}\) of \(\M_g(\pp^r, d)\) such that the moduli of the source map
\begin{equation}\label{eq:moduli_source}
\M_g(\pp^r, d)^{\BN} \to \M_g
\end{equation}
is generically smooth of the expected relative dimension \(\rho(g, r, d) + (r+1)^2 - 1\), and, furthermore, the general map parameterized by  \(\M_g(\pp^r, d)^{\BN}\) is a nondegenerate embedding.
\end{thm}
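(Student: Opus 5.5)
We will argue by embedded degeneration and induction, using deformation theory of maps. The basic criterion is the following: if \(f \colon C \to \pp^r\) is an unramified map from a (nodal) curve with normal sheaf \(N_f\) satisfying \(H^1(C, N_f) = 0\), then \([f]\) is a smooth point of \(\M_g(\pp^r, d)\) (or of the corresponding space of stable maps) of dimension \(\chi(N_f) = (r+1)d - (r-3)(g-1)\). This is exactly \(\dim \M_g + \rho(g,r,d) + (r+1)^2 - 1\). Let \(\M_g(\pp^r,d)^{\BN}\) be the component through a smoothing of such an \(f\). Upper semicontinuity of fiber dimension, together with the Brill--Noether dimension bound (nonemptiness aside, the fibers of \eqref{eq:moduli_source} over a curve \(C\) are \(\PGL_{r+1}\)-bundles over open subsets of \(G^r_dC\), of dimension at least \(\rho + (r+1)^2-1\) by the Kempf--Kleiman--Laksov lower bound), then shows two things. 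First, the total dimension \(\chi(N_f)\) forces dominance onto \(\M_g\). Second, generic fibers have the expected dimension, provided we also know the general map has \(h^1(C, f^*\O(1))\)-Petri injectivity. The cleaner route is to note that \(\M_g(\pp^r,d)^{\BN}\to\M_g\) has differential whose cokernel is \(H^1(N_f)\). Hence \(H^1(N_f)=0\) gives smoothness of \eqref{eq:moduli_source} at \([f]\), and the relative dimension is \(\chi(N_f) - (3g-3)\), as claimed.

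So the core task is to construct, for every \((g,r,d)\) with \(\rho \geq 0\) and \(r\geq 3\), a nondegenerate embedded nodal curve \(C \subset \pp^r\) of degree \(d\) and arithmetic genus \(g\) that is smoothable, with \(H^1(N_C)=0\). Embeddedness and nondegeneracy are open conditions, so they then pass to the general member. We construct such curves inductively by gluing. The base cases are rational normal curves and, more generally, rational curves of degree \(d\geq r\) in \(\pp^r\). For these, \(N_C\) is a quotient of \(\O(1)^{r+1}|_C\), which is a sum of line bundles of positive degree on \(\pp^1\), so \(H^1 = 0\). Elliptic normal curves serve as further base cases. For the inductive step, take a BN-curve \(C\) with \(H^1(N_C)=0\) and attach a rational curve \(R\) of degree \(e \leq r\) meeting \(C\) quasi-transversally at \(m\) general points. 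Choosing \(1\le m \le e+1\), the union has degree \(d+e\) and genus \(g + m - 1\). We use the standard exact sequences
\[0 \to N_{C\cup R}|_R(-\Gamma) \to N_{C\cup R} \to N_{C\cup R}|_C \to 0,\]
where \(N_{C\cup R}|_C\) is an elementary modification of \(N_C\) at the points of \(\Gamma = C\cap R\) toward the tangent directions of \(R\), and symmetrically on \(R\). Vanishing of \(H^1\) on the union reduces to \(H^1(N_C(\text{positive modifications}))=0\), which follows from \(H^1(N_C)=0\), and to \(H^1(N_R|_{\ldots}(-\Gamma))=0\). The latter holds when \(m \leq e+1\) by an explicit computation of \(N_R\) for rational curves with general points, using the splitting of \(N_R\) as balanced. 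Smoothability of the nodal union follows from the same \(H^1\) vanishing, by Hartshorne--Hirschowitz.

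The main obstacle is the bookkeeping. We must show that every \((g,r,d)\) with \(\rho\geq 0\) is reached by a sequence of such attachments (of rational curves of degree \(e\) through \(m\leq e+1\) points) starting from a base case. We must also ensure that \(\rho\) stays nonnegative along the way, since adding \(R\) with \((e,m)\) changes \(\rho\) by \((r+1)e - r(m-1)\)-type quantities. The natural moves are \((e,m)=(1,1)\), which adds a line and raises \(d\) only, and \((e,m)=(r,r+1)\) or \((e,m)=(e,e+1)\), which add genus efficiently. We would try first to reach \((g,r,d)\) by starting from a curve with \(d\) minimal relative to \(g\) (so \(\rho\in\{0,\dots,r\}\)) built using only \(m=e+1\) attachments, then adding lines to increase \(d\). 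We also need to check that the normal-bundle vanishing for \(R\) with \(e+1\) general points on \(C\) really holds, which requires the points on \(C\) to impose independent conditions; we would handle this by choosing \(R\) first and then degenerating \(C\) to pass through general points. That interpolation input is the delicate step.
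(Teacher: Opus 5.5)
There are two genuine gaps. First, \(H^1(C,N_f)=0\) is the wrong vanishing. It makes \([f]\) a smooth point of \(\M_g(\pp^r,d)\) of dimension \(\chi(N_f)\), but it does not control the map to \(\M_g\). From \(0\to T_C\to f^*T_{\pp^r}\to N_f\to 0\), the differential of \(\M_g(\pp^r,d)\to\M_g\) at \([f]\) is the coboundary \(H^0(N_f)\to H^1(T_C)\). Its cokernel is the image of \(H^1(T_C)\) in \(H^1(f^*T_{\pp^r})\), i.e.\ all of \(H^1(f^*T_{\pp^r})\) when \(H^1(N_f)=0\), not \(H^1(N_f)\). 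Your dimension count also runs the wrong way: a lower bound \(\rho+(r+1)^2-1\) on fiber dimension, together with total dimension \(\chi(N_f)\), only gives \(\dim(\text{image})\le 3g-3\). And \(H^1(N_C)=0\) really is compatible with non-dominance. A smooth curve of type \((3,7)\) on a smooth quadric in \(\pp^3\) has \(H^1(N_C)=0\) (both \(\O_C(3,7)\) and \(\O_C(2,2)\) have vanishing \(H^1\)) and moves in a family of the expected dimension \(4d=40\), yet it is trigonal of genus \(12\). The extra ``Petri injectivity'' you say you would need is exactly the missing vanishing. By the Euler sequence, \(H^1(f^*T_{\pp^r})^\vee\) is the kernel of \(V\otimes H^0(K_C\otimes L^\vee)\to H^0(K_C)\), where \(L=f^*\O(1)\) and \(V\subseteq H^0(L)\) is the space of pulled-back linear forms. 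So the degeneration must kill \(H^1(f^*T_{\pp^r})\), which is what the paper does. This is also easier than working with normal bundles: since \(T_{\pp^r}\) lives on all of \(\pp^r\), one simply has \(0\to T_{\pp^r}|_R(-C\cap R)\to T_{\pp^r}|_{C\cup R}\to T_{\pp^r}|_C\to 0\), with no elementary modifications. Moreover, in the space of stable maps, \(H^1(f_0^*T_{\pp^r})=0\) makes the map to the moduli of prestable curves smooth at \([f_0]\), so smoothability of the nodal source is automatic.

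Second, your moves cannot reach the interesting range. A rational curve of degree \(e\) meeting \(C\) in \(m\le e+1\) points changes \((d,g)\) by \((e,m-1)\), so \(g-d\) never increases. All your base cases (rational curves of degree \(\ge r\), elliptic normal curves) have \(g-d\le -r\). Thus you only ever produce curves with \(d\ge g+r\), and never, e.g., canonical curves \((g,d)=(r+1,2r)\); yet all of the content of the theorem lies in \(d<g+r\). What is missing is a move that raises \(g\) faster than \(d\). This is the paper's move (C): attach a rational normal curve \(R\) meeting \(C\) in \(r+2\) points, so \(m=e+2\). This is allowed because for a rational normal curve the Euler sequence gives the perfectly balanced \(T_{\pp^r}|_R\simeq\O(r+1)^{\oplus r}\), whence \(T_{\pp^r}|_R(-C\cap R)\simeq\O(-1)^{\oplus r}\) still has no \(H^1\). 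Together with the \(1\)-secant and \(2\)-secant lines, this single move generates every \((g,d)\) with \(\rho\ge 0\) from \((0,r)\). So neither elliptic base cases nor any interpolation input are needed.
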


Since all of the properties guaranteed in this theorem are open, for each tuple \((g, r, d)\) such that \(\rho(g, r, d) \geq 0\), it suffices to show that there exists a nondegenerate embedding \(f \colon C \hookrightarrow \pp^r\) of degree \(d\) from a smooth curve \(C\) of genus \(g\) at which that map~\eqref{eq:moduli_source} is smooth of relative dimension \(\rho(g, r, d) + (r+1)^2 - 1\).  By deformation theory, the relative tangent space of the map~\eqref{eq:moduli_source} at the point \([f \colon C \to \pp^r]\) is isomorphic to the vector space \(H^0(C, f^*T_{\pp^r})\).  Furthermore, obstructions to smoothness of the map~\eqref{eq:moduli_source} at the point \([f \colon C \to \pp^r]\) lie in the vector space \(H^1(C, f^*T_{\pp^r})\).  It suffices to show, therefore, that 
\(h^0(C, f^*T_{\pp^r}) = \rho(g, r, d) + (r+1)^2 - 1\) and  
\(h^1(C, f^*T_{\pp^r}) = 0\).  In fact, these two properties are equivalent since
\[\chi(C, f^*T_{\pp^r}) \colonequals h^0(C, f^*T_{\pp^r}) - h^1(C, f^*T_{\pp^r})\]
can be computed by the Euler sequence
\begin{equation}\label{eq:euler}
0 \to \O_{\pp^r} \to \O_{\pp^r}(1)^{\oplus r+1} \to T_{\pp^r} \to 0
\end{equation}
 to be \(\rho(g, r, d) + (r+1)^2 - 1\).  Our goal, therefore, is to show that for each tuple \((g, r, d)\) such that \(\rho(g, r, d) \geq 0\), there exists a point \([f \colon C \to \pp^r]\) of \(\M_g(\pp^r, d)\) such that \(f\) is a nondegenerate embedding and \(h^1(C, f^*T_{\pp^r}) = 0\).

The previous paragraph covers the ``deformation theory'' part of the title of this section.  This technique is particularly powerful when combined with embedded degeneration and induction.  That is, we compactify the space \(\M_g(\pp^r, d)\) to \(\bar{\M}_g(\pp^r, d)\), the space of stable maps from a nodal source curve of genus \(g\).  By the semicontinuity theorem and openness of the property of being a nondegenerate embedding, if there is any point \([f_0 \colon C_0 \to \pp^r] \in \bar{\M}_g(\pp^r, d)\) for which \(f_0\) is a nondegenerate embedding with \(h^1(C_0, f_0^*T_{\pp^r}) = 0\), then the same is true for a general point in that component of \(\bar{\M}_g(\pp^r, d)\) and hence a general point in the interior \(\M_g(\pp^r, d)\), as desired.

We must prove a result for each tuple of nonnegative integers \((g, r, d)\) with \(r\geq 3\) and such that \(g - (r+1)(g - d +r) \geq 0\).  In this proof we will think of \(r\) as fixed and view the Brill--Noether inequality as prescribing how large \(d\) must be for each \(g\). Rearranging terms, \(\rho(g, r, d) \geq 0\) is equivalent to
\begin{equation}\label{eq:BN_rearrange}
d \geq \left\lceil\left(\frac{r}{r+1}\right) g\right\rceil + r.
\end{equation}
The collection of pairs \((d, g)\) satisfying~\eqref{eq:BN_rearrange} for \(r=3\) is pictured in Figure~\ref{fig:BN_lattice}.  
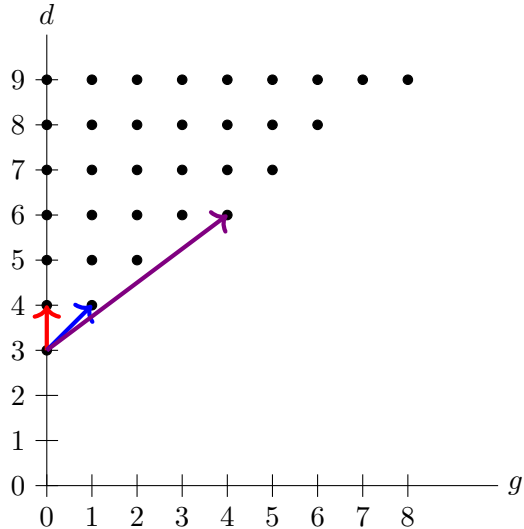
\begin{figure}[h!]
\begin{tikzpicture}[scale=.6]

\draw (0,0) -- (10, 0);
\draw (0,0) -- (0, 10);
\draw (10,0) node[right]{\(g\)};
\draw (0, 10) node [above]{\(d\)};

\filldraw (0, 3) circle (3pt);
\filldraw (0, 4) circle (3pt);
\filldraw (0, 5) circle (3pt);
\filldraw (0, 6) circle (3pt);
\filldraw (0, 7) circle (3pt);
\filldraw (0, 8) circle (3pt);
\filldraw (0, 9) circle (3pt);
\filldraw (1, 4) circle (3pt);
\filldraw (1, 5) circle (3pt);
\filldraw (1, 6) circle (3pt);
\filldraw (1, 7) circle (3pt);
\filldraw (1, 8) circle (3pt);
\filldraw (1, 9) circle (3pt);
\filldraw (2, 5) circle (3pt);
\filldraw (2, 6) circle (3pt);
\filldraw (2, 7) circle (3pt);
\filldraw (2, 8) circle (3pt);
\filldraw (2, 9) circle (3pt);
\filldraw (3, 6) circle (3pt);
\filldraw (3, 7) circle (3pt);
\filldraw (3, 8) circle (3pt);
\filldraw (3, 9) circle (3pt);
\filldraw (4, 6) circle (3pt);
\filldraw (4, 7) circle (3pt);
\filldraw (4, 8) circle (3pt);
\filldraw (4, 9) circle (3pt);
\filldraw (5, 7) circle (3pt);
\filldraw (5, 8) circle (3pt);
\filldraw (5, 9) circle (3pt);
\filldraw (6, 8) circle (3pt);
\filldraw (6, 9) circle (3pt);
\filldraw (7, 9) circle (3pt);
\filldraw (8, 9) circle (3pt);

\draw (0, -.25)--(0, .25);
\draw (0, -.25) node[below]{\(0\)};
\draw (1, -.25)--(1, .25);
\draw (1, -.25) node[below]{\(1\)};
\draw (2, -.25)--(2, .25);
\draw (2, -.25) node[below]{\(2\)};
\draw (3, -.25)--(3, .25);
\draw (3, -.25) node[below]{\(3\)};
\draw (4, -.25)--(4, .25);
\draw (4, -.25) node[below]{\(4\)};
\draw (5, -.25)--(5, .25);
\draw (5, -.25) node[below]{\(5\)};
\draw (6, -.25)--(6, .25);
\draw (6, -.25) node[below]{\(6\)};
\draw (7, -.25)--(7, .25);
\draw (7, -.25) node[below]{\(7\)};
\draw (8, -.25)--(8, .25);
\draw (8, -.25) node[below]{\(8\)};

\draw (-.25, 0) -- (.25, 0);
\draw (-.25, 0) node[left]{\(0\)};
\draw (-.25, 1) -- (.25, 1);
\draw (-.25, 1) node[left]{\(1\)};
\draw (-.25, 2) -- (.25, 2);
\draw (-.25, 2) node[left]{\(2\)};
\draw (-.25, 3) -- (.25, 3);
\draw (-.25, 3) node[left]{\(3\)};
\draw (-.25, 4) -- (.25, 4);
\draw (-.25, 4) node[left]{\(4\)};
\draw (-.25, 5) -- (.25, 5);
\draw (-.25, 5) node[left]{\(5\)};
\draw (-.25, 6) -- (.25, 6);
\draw (-.25, 6) node[left]{\(6\)};
\draw (-.25, 7) -- (.25, 7);
\draw (-.25, 7) node[left]{\(7\)};
\draw (-.25, 8) -- (.25, 8);
\draw (-.25, 8) node[left]{\(8\)};
\draw (-.25, 9) -- (.25, 9);
\draw (-.25, 9) node[left]{\(9\)};

\draw[->, color=red, ultra thick] (0, 3) -- (0,4);
\draw[->, color=blue, ultra thick] (0, 3) -- (1,4);
\draw[->, color=violet, ultra thick] (0, 3) -- (4,6);

\end{tikzpicture}
\caption{For fixed \(r\) (here \(r=3\)) the set of tuples \((g, d)\) such that \(\rho(g, r, d) \geq 0\) (pictured here in black) can be obtained from the rational normal curve \((0, r)\) by successively applying one of   {\color{red}(A) \((d, g) \mapsto (d+1, g)\)}, {\color{blue}(B) \((d, g) \mapsto (d+1, g+1)\)}, or finally {\color{violet}(C) \((d, g) \mapsto (d+r, g+r+1)\)}.}\label{fig:BN_lattice}
\end{figure}

When \(g=0\), the smallest \(d\) satisfying~\eqref{eq:BN_rearrange} is \(d=r\), yielding the invariants of a rational normal curve \((g, r, d) = (0, r, r)\).  The other interesting case is \(g = r+1\), in which case~\eqref{eq:BN_rearrange} becomes \(d \geq 2r\); the invariants \((g, r, d) = (r+1, r, 2r)\) describe canonically embedded curves.  In fact, the set of tuples \((g, d)\) such that \(\rho(g, r, d) \geq 0\) can be obtained from the rational normal curve \((0, r)\) by successively applying one of three basic moves:
\begin{enumerate}
\item[(A)] Increase \(d\) by one: \((d, g) \mapsto (d+1, g)\), 
\item[(B)] Increase both \(d\) and \(g\) by one: \((d, g) \mapsto (d+1, g+1)\), 
\item [(C)] Increase \(d\) by \(r\) and \(g\) by \(r+1\): \((d, g) \mapsto (d+r, g+r+1)\).
\end{enumerate}
The last of these goes from the rational normal curve to a canonical curve.

This numerical property of the Brill--Noether number naturally suggests an inductive argument with three inductive steps (A), (B), and (C) as above.  Moreover, we can realize each of these by building a reducible nodal curve with the new invariants out of one with the old invariants: if \(C\) has degree \(d\) and genus \(g\), then 
\begin{enumerate}
\item[(A)] Attach to \(C\) a quasitransverse \(1\)-secant line.
\item[(B)] Attach to \(C\) a quasitransverse \(2\)-secant line.
\item [(C)] Attach to \(C\) a quasitransverse \((r+2)\)-secant rational normal curve.
\end{enumerate}

The only remaining ingredient is the following elementary lemma determining the isomorphism class of the restricted tangent bundle of a line and a rational normal curve.

\begin{lem}\label{lem:line_rnc}
\begin{enumerate}
\item Let \(R \subset \pp^r\) be a line.  Then \(T_{\pp^r}|_R \simeq \O_{\pp^1}(1)^{\oplus r-1} \oplus \O_{\pp^1}(2)\).
\item Let \(R \subset \pp^r\) be a rational normal curve.  Then \(T_{\pp^r}|_R \simeq \O_{\pp^1}(r+1)^{\oplus r}\).
\end{enumerate}
\end{lem}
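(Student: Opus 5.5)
The plan is to restrict the Euler sequence~\eqref{eq:euler} to the curve and compute. For a smooth rational curve \(R \subset \pp^r\) of degree \(e\), pulling back gives
\[0 \to \O_{\pp^1} \to \O_{\pp^1}(e)^{\oplus r+1} \to T_{\pp^r}|_R \to 0.\]
Since every vector bundle on \(\pp^1\) splits, we write \(T_{\pp^r}|_R \simeq \bigoplus_{i=1}^r \O_{\pp^1}(c_i)\). The sequence gives \(\sum c_i = (r+1)e\). Because \(T_{\pp^r}|_R\) is a quotient of \(\O(e)^{\oplus r+1}\), every \(c_i \geq e\). It remains to pin down the \(c_i\) in each case.

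\emph{Line.} Here \(e=1\), so \(c_i \geq 1\) and \(\sum c_i = r+1\). This forces exactly one \(c_i = 2\) and the rest equal to \(1\). As a consistency check, \(T_{\pp^r}|_R \simeq T_R \oplus N_{R/\pp^r} = \O(2)\oplus\O(1)^{\oplus r-1}\).

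\emph{Rational normal curve.} Here \(e=r\), so \(c_i\geq r\) and \(\sum c_i = r(r+1)\). This step is the main obstacle, because the lower bound alone does not determine the splitting. One needs an additional upper bound, namely \(c_i \leq r+1\) for all \(i\), equivalently \(h^0(T_{\pp^r}|_R(-r-2)) = 0\). Twisting the Euler sequence by \(-r-2\) and taking cohomology gives
\[H^0(\O(-2))^{\oplus r+1}=0 \to H^0(T|_R(-r-2)) \to H^1(\O(-r-2)) \xrightarrow{\ \delta\ } H^1(\O(-2))^{\oplus r+1}.\]
So the upper bound is equivalent to injectivity of \(\delta\). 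By Serre duality, \(\delta\) is dual to the multiplication map
\[H^0(\O_{\pp^1})\otimes (K^{r+1})^\vee \cong H^0(\pp^r,\O(1)) \to H^0(\O_{\pp^1}(r)),\]
and showing \(\delta\) is injective amounts to showing this dual map is surjective. Surjectivity holds exactly because the rational normal curve is embedded by the complete linear series \(|\O_{\pp^1}(r)|\), so the restriction \(H^0(\pp^r,\O(1))\to H^0(\pp^1,\O(r))\) is an isomorphism.

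With \(r \leq c_i \leq r+1\) and \(\sum c_i = r(r+1)\), we conclude \(c_i = r+1\) for all \(i\) (with \(r\) summands). An alternative for the last step is to use \(T_{\pp^r}|_R \supset T_R = \O(2)\) and the known normal bundle \(\O(r+2)^{\oplus r-1}\). However, that route requires knowing the normal bundle, so I prefer the cohomological argument above.
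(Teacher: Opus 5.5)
Your proof is correct. For the rational normal curve it is essentially the paper's argument seen through Serre duality, and for the line it takes a genuinely different route.

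\textbf{Rational normal curve.} Both arguments restrict the Euler sequence and reduce to the restriction map \(H^0(\pp^r,\O(1))\to H^0(\pp^1,\O(r))\), \((a_i)\mapsto\sum a_i s_i\), being an isomorphism.

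The paper applies \(\mathrm{Hom}(-,\O(r))\) to the Euler sequence to get \(\mathrm{Hom}(T_{\pp^r}|_R,\O(r))=0\). This uses only injectivity of that map (linear independence of the \(s_i\)) and gives the lower bound \(c_i\ge r+1\).

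You show \(H^0(T_{\pp^r}|_R(-r-2))=0\), which is Serre dual to \(H^1(T_{\pp^r}^\vee|_R(r))=0\). This uses surjectivity (completeness of the linear series) and gives the upper bound \(c_i\le r+1\).

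Since the map goes between two \((r+1)\)-dimensional spaces, injectivity and surjectivity are equivalent. Moreover, either one-sided bound alone forces \(c_i=r+1\) via \(\sum c_i=r(r+1)\), so your lower bound \(c_i\ge r\) is not actually needed in this part. The paper's formulation is a bit more direct, since it avoids Serre duality.

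\textbf{Line.} The paper uses \(0\to T_R\to T_{\pp^r}|_R\to N_R\to 0\) with \(N_R\simeq\O(1)^{\oplus r-1}\), and splits it via \(\mathrm{Ext}^1(\O(1),\O(2))=H^1(\O(1))=0\). Your degree count from the Euler sequence (each \(c_i\ge 1\) and \(\sum c_i=r+1\)) needs neither the normal bundle of a line nor a splitting argument. It also treats both parts of the lemma by a single mechanism.
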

\begin{proof}
\begin{enumerate}
\item This follows from the normal bundle exact sequence
\[0 \to T_R \simeq \O_{\pp^1}(2) \to T_{\pp^r}|_R \to N_{R/\pp^r} \simeq \O_{\pp^1}(1)^{\oplus r-1} \to 0,\]
since the extension is necessarily split.
\item Consider the Euler exact sequence \eqref{eq:euler} restricted to \(R\):
\[0 \to \O_{\pp^1} \to \O_{\pp^1}(r)^{\oplus r+1} \to T_{\pp^r}|_R \to 0.\]
The first map \(\O_{\pp^1} \to \O_{\pp^1}(r)^{\oplus r+1}\) is multiplication by the (linearly independent) sections of \(\O_{\pp^1}(r)\) defining the nondegenerate embedding \(R \hookrightarrow \pp^r\).
The bundle \(T_{\pp^r}|_R\) has rank \(r\) and degree \(r(r+1)\).  To show that it is perfectly balanced, it therefore suffices to rule out a summand of degree at most \(r\); equivalently, a nonzero map \(T_{\pp^r}|_R \to \O_{\pp^1}(r)\).  Again considering the Euler sequence, any such map yields a nonzero map \(\O_{\pp^1}(r)^{\oplus r+1} \to \O_{\pp^1}(r)\) that vanishes when precomposed with the map \(\O_{\pp^1} \to \O_{\pp^1}(r)^{\oplus r+1}\), which is impossible since the sections defining the  map \(\O_{\pp^1} \to \O_{\pp^1}(r)^{\oplus r+1}\) are linearly independent.\qedhere
\end{enumerate}
\end{proof}

\begin{proof}[Proof of Theorem~\ref{thm:BN_existence}]
As explained above, deformation theory reduces this to finding, for each tuple \((g, r, d)\) such that \(\rho(g, r, d) \geq 0\), a nondegenerate embedding of a nodal curve \(f_0 \colon C_0 \to \pp^r\) such that \(h^1(C_0, f_0^*T_{\pp^r}) = 0\).
For each fixed \(r \geq 3\), we will prove this by induction on \((g, d)\).  The base case is \((g, d) = (0,r)\) rational normal curves, for which the result follows from Lemma~\ref{lem:line_rnc}.

Now assume by induction that there exists a nondegenerate degree \(d\) embedding \([f \colon C \to \pp^r]\) of a smooth curve \(C\) of genus \(g\) for which \(h^1(C, f^*T_{\pp^r})=0\).  For each of the inductive moves (A), (B), and (C), let \(R\) denote the rational curve that is attached, so \(C_0 \colonequals C \cup R\) has the new desired invariants.  Restriction to \(C\) yields an exact sequence
\[0 \to T_{\pp^r}|_R(-C \cap R) \to T_{\pp^r}|_{C \cup R} \to T_{\pp^r}|_C \to 0.\]
We know that \(h^1(C, f^*T_{\pp^r})=0\) by induction.  Furthermore, by Lemma~\ref{lem:line_rnc} and the fact that \(\# C \cap R = 1\), \(2\) and \(r+2\) respectively, in each of our inductive moves we have
\begin{enumerate}
\item[(A)] \( T_{\pp^r}|_R(-C \cap R) \simeq \O^{\oplus r-1} \oplus \O(1)\),
\item[(B)] \( T_{\pp^r}|_R(-C \cap R) \simeq \O(-1)^{\oplus r-1} \oplus \O\),
\item [(C)] \( T_{\pp^r}|_R(-C \cap R) \simeq \O(-1)^{\oplus r}\).
\end{enumerate}
In each of these cases, the bundle has no higher cohomology.
Hence, considering the long exact sequence in cohomology, we have \(h^1(C \cup R, T_{\pp^r}|_{C \cup R}) = 0\).  Thus \(f_0 \colon C \cup R \to \pp^r\) satisfies the conclusions for the new invariants.
\end{proof}

\subsection{Modifications of normal bundles}\label{sec:normal}
The final technique we will illustrate also concerns embedded degeneration of Brill--Noether curves, but we will focus on the normal bundle.  We will illustrate new subtleties that arise in this case by proving the following very special case of Theorem~\ref{thm:interpolation}, which is originally due to Sacchiero \cite[(II) on page 1]{sacc_int}.  The proof we give is not the quickest way to a solution, but will illustrate some of the powerful techniques that have been developed in \cite{aly, interpolation, grass, canonical} in recent years.

\begin{prop}\label{prop:odd_deg}
Let \(C \subset \pp^3\) be a general curve of genus \(0\) and odd degree \(d\geq 3\).  Then
\[N_C \simeq \O_{\pp^1}(2d-1) \oplus \O_{\pp^1}(2d-1)\]
is perfectly balanced.
In particular, rational curves of odd degree in \(\pp^3\) interpolate the expected number of general points.
\end{prop}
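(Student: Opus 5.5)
We argue by induction on the odd degree \(d\), degenerating a general rational curve of degree \(d+2\) to a reducible nodal curve \(C\cup R\). Here \(C\) is a general rational curve of degree \(d\), and \(R\) is a conic meeting \(C\) quasitransversely at a single point \(p\). The union has arithmetic genus \(0\) and degree \(d+2\), and it is a limit of smooth rational curves of degree \(d+2\): it is unobstructed, since \(h^1\) of \(T_{\pp^3}\) restricted to it vanishes by the argument in the proof of Theorem~\ref{thm:BN_existence}. The normal bundle of a rational curve \(C'\subset\pp^3\) of degree \(d'\) has rank \(2\) and degree \(4d'-2\), so it is perfectly balanced, i.e.\ \(\simeq \O(2d'-1)^{\oplus 2}\), if and only if \(h^0(N_{C'}\otimes \O_{\pp^1}(-2d'))=0\). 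This vanishing is open in families. It therefore suffices to exhibit a line bundle \(\mathcal{M}\) on \(C\cup R\) of total degree \(-2(d+2)\) with \(h^0(N_{C\cup R}\otimes\mathcal{M})=0\); this \(\mathcal{M}\) is a limit of \(\O(-2d-4)\) on the smoothing.

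The base case \(d=3\) is the twisted cubic. Here \(N_R\) is a quotient of \(T_{\pp^3}|_R\simeq \O(4)^{\oplus 3}\) by Lemma~\ref{lem:line_rnc}, so every summand of \(N_R\) has degree at least \(4\). Its total degree is \(10\), which leaves only \(\O(4)\oplus\O(6)\) and \(\O(5)\oplus\O(5)\). I rule out \(\O(4)\oplus\O(6)\) by \(\mathrm{SL}_2\)-equivariance. The bundle \(N_R\) is homogeneous, so a summand \(\O(6)\) would be a canonical \(\mathrm{SL}_2\)-stable subbundle. Then \(\O(4)^{\oplus 3}\simeq \O(4)\otimes \mathrm{Sym}^2\), which is irreducible as a representation, would have to contain an \(\mathrm{SL}_2\)-stable subbundle of degree \(\ge 8\) containing \(T_R=\O(2)\). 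I would check this is impossible by a weight computation. If that check gets messy, I would instead compute \(h^0(N_R(-5))\) directly from the Euler sequence.

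For the inductive step I use the standard description of the restrictions of \(N_{C\cup R}\):
\[N_{C\cup R}|_C \simeq N_C[p\posmod T_pR], \qquad N_{C\cup R}|_R\simeq N_R[p\posmod T_pC],\]
meaning the positive elementary modifications at \(p\) towards the tangent direction of the other component. On the conic \(R\) the normal bundle is \(N_{R/\text{plane}}\oplus \O_{\text{plane}}(1)|_R\simeq \O(4)\oplus\O(2)\). I choose \(\mathcal{M}\) to have degree \(-2d\) on \(C\) and degree \(-4\) on \(R\). On \(C\), the induction hypothesis and the genericity of \(T_pR\) give \(N_C[p\posmod T_pR]\simeq\O(2d-1)\oplus\O(2d)\), and the twist is \(\O(-1)\oplus\O(0)\). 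Its sections form a single line, whose value at \(p\) is a specific vector \(v\) in the fiber. On \(R\), the modified bundle twisted by degree \(-4\) has degree \(-1\), so generically it splits as \(\O(-1)\oplus\O(0)\) or \(\O(-2)\oplus\O(1)\). It has a small space of sections, whose values at \(p\) span a specific subspace.

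Gluing requires the sections to agree at \(p\) in the fiber of \(N_{C\cup R}\). So \(h^0=0\) reduces to a transversality statement: the direction \(v\) coming from \(C\) must not lie in the span of the values at \(p\) of sections on the \(R\) side. Securing this is the main obstacle. I would first try to place the twisting points on \(R\) (and the plane of \(R\)) freely, after fixing \(C\), \(p\) and \(T_pR\), so as to move the \(R\)-side subspace independently of \(v\). If the conic side turns out to have too many sections, I would fall back on attaching a line at two points or a \(1\)-secant line twice, and re-balance the degrees of \(\mathcal{M}\) between the components. Finally, perfect balance gives interpolation of the expected number of points by the standard deformation-theory argument cited before Theorem~\ref{thm:interpolation}.
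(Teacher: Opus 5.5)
Your framework is sound, and it is genuinely different from the paper's. The \(1\)-secant conic degeneration is unobstructed, and the bidegree \((-2d,-4)\) for \(\mathcal{M}\) is the right split; in fact it is the only viable one. But the whole inductive step rests on the values \(v\) and \(w\) at \(p\) being independent, and this is exactly what you leave open. You do not even determine the splitting on \(R\). The remedies you suggest would not help:
\begin{enumerate}
\item Since \(H^1(\O_{C\cup R})=0\), \(\mathcal{M}\) is determined by its bidegree, so there are no twisting points to move.
\item Moving the plane \(P_R\) of \(R\) does not move \(w\) at all (see below).
\item Attaching a line at two points gives arithmetic genus \(1\), so it is not a degeneration of rational curves.
\end{enumerate}
So, as written, there is a gap at the one nontrivial step.

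The missing idea is that the \emph{unmodified} normal directions of both components glue to the same line. By \eqref{eq:mod_res}, the fiber of \(N_{C\cup R}|_C\simeq N_C[p\posmod T_pR]\) at \(p\) contains the image \(Q_C\) of \(N_C|_p\) as a line, and similarly \(Q_R\) on \(R\). Take local coordinates with \(C=\{y=z=0\}\), \(R=\{x=z=0\}\), so \(I_{C\cup R}=(xy,z)\). One checks that \(Q_C=Q_R\) is the line \(\ell_0\) spanned by the homomorphism \(xy\mapsto 0,\ z\mapsto 1\) in \(N_{C\cup R}|_p=\mathrm{Hom}(I/I^2,\O)|_p\).

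On \(R\): if \(T_pC\not\subset P_R\), then \(N_{R/P_R}\simeq\O(4)\) stays saturated in \(N_R[p\posmod T_pC]\simeq\O(4)\oplus\O(3)\). So the unique section on \(R\) lies in this unmodified subbundle, and \(w\) spans \(\ell_0\) whatever \(P_R\) is.

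On \(C\): the section lies in the summand \(\O(2d)=\O(2d-1)_F(p)\), where \(\O(2d-1)_F\subset N_C\) is the subbundle through the \(T_pR\)-direction. Its fiber maps isomorphically onto \(N_{C\cup R}|_p/Q_C\), so \(v\notin\ell_0\).

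Hence \(h^0(N_{C\cup R}\otimes\mathcal{M})=0\) automatically. With this added, your argument is a complete alternative proof. It replaces the paper's apparatus (the contraction of the line in Lemma~\ref{lem:1sec}, collapsing all \(q_i\) to one point, and the balanced projection sequence) with a direct gluing. That gluing is easy precisely because on the conic the surviving section comes from an unmodified summand. With \(1\)-secant lines it comes from the modified summand \(N_{L/\Lambda}(p)\), and locating where that glues is the subtle point Lemma~\ref{lem:1sec} resolves. The paper's route is the one that scales to higher \(r\) and genus.

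Finally, in your base case, irreducibility of \(\Sym^2\) is not the right criterion. Equivariant subbundles correspond to \(B\)-stable subspaces of a fiber, and \(T_R\) is already one. Projecting from a point of the cubic, as the paper does, gives \(0\to\O(5)\to N_R\to\O(5)\to 0\) at once.
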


\begin{warning}
We do not restrict to odd degree rational curves for trivial reasons: the result is false as stated for rational curves of even degree!  The reason is that it fails in characteristic \(2\).  More generally, if  \(d \not\equiv 1 \pmod{r-1}\), then the normal bundle of a rational curve of degree \(d\) in \(\pp^r\) is never balanced.  See \cite[Section 2.2]{interpolation} for more details on this phenomena.
\end{warning}

Balancedness of \(N_C\) for \(C \subset \pp^3\) a rational curve of degree \(d\) is equivalent to \(h^0(C, N_C(-2d)) = 0\), equivalently \(h^1(C, N_C(-2d)) = 0\).
The property of interpolation (\cite[Definition 1.3]{interpolation}) for \(N_C\) is equivalent to  \(N_C\) being balanced since it is nonspecial (for example, by the arguments of the previous Section~\ref{sec:BN_exist}, since \(N_C\) is a quotient of \(T_{\pp^r}|_C\)).  

A first technique is to project from a point \(q \in C\) and use the resulting exact sequence~\eqref{eq:proj} below.

\begin{defin}
Let \(C \subset \pp^r\) be a smooth curve, and let \(q \in \pp^r\).  Write \(S\) for the cone over \(C\) with vertex \(q\).
The \defi{(normal) pointing bundle} \(N_{C \to q}\) is the unique line subbundle of \(N_C\) agreeing with \(N_{C/S}\) away from \(q\) (which is only relevant if \(q \in C\)).
\end{defin}

 The pointing bundle is so called because its sections are normal directions that ``point towards \(q\)".
A local calculation \cite[Propositions 6.2 and 6.3]{aly} shows that 
\begin{equation}\label{eq:pointing}
N_{C \to q} \simeq \begin{cases} \O_C(1) & :  \ q \not\in T_pC \text{ for any \(p \in C\)}, \\  
\O_C(1)(2q) & :  \ q \in C^{\text{sm}} \text{ is not an inflection point and \(q \not\in T_pC\) for any \(p \neq q\).} \end{cases}
\end{equation}

Assume that the point \(q \in C^{\text{sm}}\) is a general point.  Projection from \(q\) yields a map \(\pi_q \colon C \to \pp^{r-1}\) and an exact sequence
\begin{equation}\label{eq:proj}
0 \to N_{C \to q} \simeq \O_C(1)(2q) \to N_C \to N_{\pi_q}(q) \to 0,
\end{equation}
where \(N_{\pi_q}\) is the normal sheaf of the map \(\pi_q\) (which agrees with the normal bundle \(N_{\pi_q(C)}\) when \(\pi_q\) is an embedding).  In the case where \(r=3\) and \(\pi_q\) is usually not an embedding, the normal sheaf can be computed via adjuction \(N_{\pi_q} \simeq \pi_q^*K_{\pp^2}^\vee \otimes K_{C}\).  In the case where \(C \subset \pp^3\) is a rational curve of degree \(d\) this yields \(N_{\pi_q} \simeq \O_{\pp^1}(3(d-1)) \otimes \O_{\pp^1}(-2) \simeq \O_{\pp^1}(3d - 5)\). 

This technique alone suffices in the following first case.

\begin{example}[Twisted cubic curves]
Let \(C \subset \pp^3\) be a twisted cubic curve.  For \(q\in C\), the image \(\pi_q(C)\) is a smooth plane conic and hence has normal bundle isomorphic to \(\O_{\pp^1}(4)\).  Using this and \eqref{eq:pointing}, the projection exact sequence~\eqref{eq:proj} therefore simplifies to
\[0 \to \O_{\pp^1}(5) \to N_C \to \O_{\pp^1}(5) \to 0,\]
and therefore \(N_C \simeq \O_{\pp^1}(5)^{\oplus 2}\). \hfill\(\righthalfcup\)
\end{example}

What made the projection exact sequence immediately effective was that it was perfectly balanced: the degree of the sub line bundle and the quotient line bundle were equal.  In general, the projection exact sequence is far from balanced: the subbundle \(\O_C(1)(2	q)\) has degree \(d+2\) whereas the quotient \(N_{\pi_q}(q)\) has degree \(3d - 4\).

In order to treat degrees larger than \(3\), as in the previous Section~\ref{sec:BN_exist}, we will degenerate \(C\) to a reducible nodal curve \(X \cup Y\).  By the semicontinuity theorem, it suffices to prove the necessary cohomological vanishing that is equivalent to balancedness/interpolation after degeneration. 

Degenerative arguments involving the normal bundle are substantially more difficult than those using the restricted tangent bundle, such as in the previous Section~\ref{sec:BN_exist}.  The essential difference is that \(T_{\pp^r}\) is a vector bundle on all of \(\pp^r\), while the normal bundle \(N_C\) is only defined on the curve \(C \subset \pp^r\).  In the first case, when we degenerate \(C\) to a reducible nodal curve \(X \cup Y\), it is clear that \(T_{\pp^r}|_C\) degenerates to the vector bundle \(T_{\pp^r}|_X\) on \(X\) and \(T_{\pp^r}|_Y\) on \(Y\) with the obvious gluing at the nodes.  On the other hand, although \(N_{X \cup Y}\) is a vector bundle on the nodal curve \(X \cup Y\), 
its restriction to a component \(N_{X \cup Y}|_X\) is not isomorphic to \(N_X\).  This fact is not surprising: deformation theory guarantees that sections of the normal bundle parameterize first order deformations of the curve, and deformations smoothing the node at a point \(p \in X \cap Y\) are necessarily nonregular at \(p\) when viewed in \(N_X\).

\begin{center}
\begin{tikzpicture}[scale=.8]
\filldraw (0,0) circle (1pt);
\draw (0,0) node[below right]{\(p\)};
\draw (-3, 0) node[above]{{\color{black}\(X\)}};
\draw (0, -3) node[left]{{\color{black}\(Y\)}};
\draw[thick, color=black] (-3, 0) -- (3, 0);
\draw[thick, color=black] (0, -3) -- (0, 3);
\draw[densely dotted] (0.12, 3) .. controls (0.28, 0.28) .. (3, 0.12);
\draw[densely dotted] (-0.12, -3) .. controls (-0.28, -0.28) .. (-3, -0.12);
\draw[->] (2.75, 0) -- (2.75, 0.1309090909090909);
\draw[->] (2.5, 0) -- (2.5, 0.144);
\draw[->] (2.25, 0) -- (2.25, 0.16);
\draw[->] (2, 0) -- (2, 0.18);
\draw[->] (1.75, 0) -- (1.75, 0.20571428571428574);
\draw[->] (1.5, 0) -- (1.5, 0.24);
\draw[->] (1.25, 0) -- (1.25, 0.288);
\draw[->] (1, 0) -- (1, 0.36);
\draw[->] (0.75, 0) -- (0.75, 0.48);
\draw[->] (0.5, 0) -- (0.5, 0.72);
\draw[->] (0.25, 0) -- (0.25, 1.44);
\draw[->] (-2.75, 0) -- (-2.75, -0.1309090909090909);
\draw[->] (-2.5, 0) -- (-2.5, -0.144);
\draw[->] (-2.25, 0) -- (-2.25, -0.16);
\draw[->] (-2, 0) -- (-2, -0.18);
\draw[->] (-1.75, 0) -- (-1.75, -0.20571428571428574);
\draw[->] (-1.5, 0) -- (-1.5, -0.24);
\draw[->] (-1.25, 0) -- (-1.25, -0.288);
\draw[->] (-1, 0) -- (-1, -0.36);
\draw[->] (-0.75, 0) -- (-0.75, -0.48);
\draw[->] (-0.5, 0) -- (-0.5, -0.72);
\draw[->] (-0.25, 0) -- (-0.25, -1.44);
\end{tikzpicture}
\end{center}

We have a good description of \(N_{X \cup Y}|_X\) in terms of \(N_X\) and likewise for \(Y\) (see Lemma~\ref{lem:HH} below) along these lines.  However, the two restrictions \(N_{X \cup Y}|_X\) and \(N_{X\cup Y}|_Y\) do not alone suffice to describe the bundle \(N_{X \cup Y}\) and the gluing conditions at the nodes are very subtle, and only well understood in specific degenerations.

 The first task is to understand how the normal bundle of a reducible nodal curve \(N_{X \cup Y}\) relates to the normal bundle \(N_X\) of a component.  This is a classical result of Hartshorne--Hirschowitz.  To state this result in full generality we need the following definition.

\begin{defin}
Let \(E\) be a vector bundle on a variety \(X\), let \(D \subset X\) be a Cartier divisor, and let \(F \subset E|_D\) be a subbundle.  The \defi{negative elementary modification of \(E\) along \(D\) towards \(F\)} is the vector bundle
\[E[D\negmod F] \colonequals \ker\left( E \to E|_D/F\right).\]
The \defi{positive elementary modification of \(E\) along \(D\) towards \(F\)} is \(E[D \posmod F] \colonequals E[D \negmod F](D)\).
\end{defin}

The bundle \(E[D\posmod F]\) is more positive than \(E\) insofar as its sections are allowed to have poles along \(D\) in the subbundle \(F\).   For example, if \(E \simeq F \oplus F'\), then \(E[D \posmod F|_D] \simeq F(D) \oplus F'\).  More generally, given an exact sequence \(0 \to F \to E \to Q \to 0\), then induced exact sequence for the modification is
\begin{equation}\label{eq:mod_ses}
0 \to F(D) \to E[D \posmod F|_D] \to Q \to 0.
\end{equation}
On the other hand, if we have an exact sequence \(0 \to F \to E|_D \to Q \to 0\) on \(D\), then there is an induced exact sequence
\begin{equation}\label{eq:mod_res}
0 \to Q \to E[D \posmod F]|_D \to F \otimes \O(D)|_D \to 0.
\end{equation}

\begin{lem}[{Hartshorne--Hirschowitz \cite[Corollary 3.2]{HH}}]\label{lem:HH}
Let \(X \cup Y\) be a reducible nodal curve with \(X \cap Y = \{p_1, \dots, p_m\}\) and for each \(i\), let \(q_i \in T_{p_i}Y \smallsetminus p_i\). Then 
\[N_{X \cup Y}|_X \simeq N_X[p_1 \posmod N_{C \to q_1}] \cdots [p_m \posmod N_{C \to q_m}].\]
\end{lem}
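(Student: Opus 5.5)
Since the statement concerns the restriction \(N_{X\cup Y}|_X\), the plan is to compare it with \(N_X\) directly. Both are vector bundles of rank \(r-1\) on \(X\) (taking \(r=\dim\) of the ambient space), and they agree away from the nodes. I would first construct a natural map \(N_X \to N_{X\cup Y}|_X\), or more precisely a map between their duals, the conormal sheaves. Let \(I_X\) and \(I_{X\cup Y}\) be the ideal sheaves, with \(I_{X\cup Y}\subset I_X\). Restricting to \(X\) gives a map
\[
I_{X\cup Y}/I_{X\cup Y}^2 \otimes \O_X \to I_X/I_X^2 .
\]
This map is an isomorphism away from \(p_1,\dots,p_m\). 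Dualizing, we get an inclusion \(N_X \hookrightarrow N_{X\cup Y}|_X\) whose cokernel is supported at the nodes. The problem is therefore entirely local at each \(p_i\). It suffices to show two things: the cokernel at \(p_i\) has length one, and the resulting one-dimensional positive modification is in the direction of the line \(N_{X\to q_i}|_{p_i}\subset N_X|_{p_i}\).

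For the local computation I would work in the completed local ring at \(p=p_i\). I choose formal coordinates \(x,y,z_3,\dots,z_r\) on \(\pp^r\) such that \(X=\{y=z_3=\cdots=z_r=0\}\) and \(Y=\{x=z_3=\cdots=z_r=0\}\). This is possible because the node is ordinary, i.e. the tangent lines are distinct; after an analytic change of coordinates both branches can be straightened. Then
\[
I_{X\cup Y}=(xy, z_3,\dots,z_r), \qquad I_X=(y,z_3,\dots,z_r).
\]
The conormal sheaf \(I_{X\cup Y}/I_{X\cup Y}^2\) restricted to \(X\) is free on \(xy, z_3,\dots,z_r\). The map to \(I_X/I_X^2\) (free on \(y,z_j\)) sends \(xy\mapsto x\cdot y\) and \(z_j\mapsto z_j\). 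Dually, \(N_{X\cup Y}|_X\) is generated by \(\partial_y/x\) and the \(\partial_{z_j}\). This shows that \(N_{X\cup Y}|_X = N_X[p\posmod \langle \partial_y\rangle]\), a positive modification at \(p\) towards the normal direction \(\partial_y|_p\), which is the image of \(T_pY\) in \(N_X|_p\).

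It remains to identify this direction with \(N_{X\to q}|_p\) for \(q\in T_pY\smallsetminus p\). The cone over \(X\) with vertex \(q\) contains the line \(\overline{pq}=T_pY\). Near \(p\), its tangent plane along \(X\) is spanned by \(T_pX\) and the direction towards \(q\), so the fiber of \(N_{X\to q}\) at \(p\) is the image of \(T_pY\) in \(N_X|_p\). If \(q\in X\), the pointing bundle is the saturation, and away from \(q\) it is given by the same description, so its fiber at \(p\neq q\) is unchanged. I expect the main obstacle to be this step: checking that the pointing subbundle is genuinely a subbundle near \(p\) with the claimed fiber, including degenerate cases such as \(T_pY\) meeting \(X\) again. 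Since only the fiber at \(p\) matters, and the cone is smooth along \(X\) near \(p\) whenever \(q\notin T_pX\) (which holds because \(T_pY\neq T_pX\)), the direct tangent-plane computation should suffice. Because the modifications at distinct points commute, combining the local statements at all \(p_i\) gives the lemma.
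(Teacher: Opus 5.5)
Your proof is correct. It rests on the same local fact as the paper's sketch: a node is planar, so near \(p_i\) the union \(X\cup Y\) sits in a smooth surface germ, and there its equation is the product of the equations of the two branches. You carry this out differently, though.

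The paper first treats the case where \(X\cup Y\) lies on a smooth surface \(S\) and reads off \(\O_S(X+Y)|_X\simeq \O_S(X)|_X(X\cap Y)\). It then says the general case follows because the statement is local. You instead construct the canonical injection \(N_X\hookrightarrow N_{X\cup Y}|_X\), dual to the conormal map \(I_{X\cup Y}/I_{X\cup Y}^2\otimes\O_X\to I_X/I_X^2\). You then compute its cokernel in formal coordinates in which the branches are two axes of the plane \(\{z_3=\cdots=z_r=0\}\). Your relation \(xy\mapsto x\cdot y\) is exactly the paper's divisor identity written in coordinates.

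Your route buys two things. First, the global comparison map makes ``it is a local property'' precise, since a locally free sheaf containing \(N_X\) with cokernel supported at the \(p_i\) is determined by its stalks at the \(p_i\). Second, the coordinates \(z_j\) handle the full rank-\((r-1)\) normal bundle in \(\pp^r\) at once. The paper's sketch, as written, computes normal bundles inside \(S\) and leaves implicit the passage to \(\pp^r\) through \(0\to N_{X/S}\to N_X\to N_S|_X\to 0\) and its analogue for \(X\cup Y\). The paper's route buys a coordinate-free description of the modification direction as \(N_{X/S}|_{p_i}\), which is the form used later when \(S\) is a cone (Example~\ref{ex:1sec}).

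One small correction in your last step: \(q\notin T_pX\) does not make the cone over \(X\) with vertex \(q\) smooth at \(p\). If \(\overline{pq}=T_pY\) meets \(X\) again, a second sheet of the cone passes through \(p\). This happens, for example, when \(Y\) is a \(2\)-secant line, so that \(T_{p_1}Y=Y\ni p_2\). The issue is harmless. The pointing bundle is the saturation of the image of \(\O_X(1)\to T_{\pp^r}|_X\to N_X\), the map given by the vector field pointing toward \(q\). This map is nonzero at \(p\) precisely because \(q\notin T_pX\), so the fiber \(N_{X\to q}|_p\) is the image of \(\overline{pq}=T_pY\), as you claim. Equivalently, you can use only the local branch of the cone through \(p\), which is smooth with tangent plane spanned by \(T_pX\) and \(T_pY\).
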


\begin{proof}[Proof sketch]
First, assume that \(X \cup Y\) is contained in a smooth surface \(S\).  Then \(X + Y\) is a Cartier divisor on \(S\) and \(N_{X \cup Y} \simeq \O_S(X + Y)|_{X \cup Y}\) and \(N_X \simeq \O_S(X)|_X\).  This implies the relation
\[N_{X \cup Y}|_X \simeq \O_X(X)(X \cap Y) \simeq N_X(X \cap Y),\]
which is the desired result, since for any point \(p \in X\cap Y\), the space \(N_X|_p\) is \(1\)-dimensional and hence isomorphic to \(T_pY\).

The general case follows since this is a local property and a nodal singular is planar, so \(X \cup Y\) is contained in a smooth surface locally in a neighborhood of any point in \(X \cap Y\).
\end{proof}

\begin{example}\label{ex:1sec}
Consider the degeneration (A) from the previous Section~\ref{sec:BN_exist}, that is, the union \(X \cup_p L\) of a curve \(X\) and a quasitransverse \(1\)-secant line \(L\) meeting \(X\) at the point \(p\).  Let \(q \in L\smallsetminus p\) and let \(S\) be the cone over \(X\) with vertex \(q\) (which is smooth along \(X\)).  Let \(\Lambda \colonequals \overline{T_pX, q}\) be the \(2\)-plane containing \(L\) and the tangent line \(T_pX\), which is the cone over \(L\) with vertex a point on the tangent line \(T_pX \smallsetminus p\).  By Lemma~\ref{lem:HH} we know that
\begin{align*}
N_{X \cup L}|_X &\simeq N_X[p \posmod N_{X/S}]\\
N_{X \cup L}|_L & \simeq N_L[p \posmod N_{L/\Lambda}] \simeq \O_{\pp^1}(2) \oplus \O_{\pp^1}(1)^{\oplus r-2}.
\end{align*}
\hfill \(\righthalfcup\)
\end{example}

We now turn to the problem of understanding the gluing data at the points of \(X \cap Y\).  We focus on
 the \(1\)-secant degeneration \(Y = L\) from Example~\ref{ex:1sec}.  This is one of the few cases where a nice description exists, and it is the only degeneration we will need to prove Proposition~\ref{prop:odd_deg}.  The bundle \(N_{X \cup L}|_L \) has distinguished summand \(\O_{\pp^1}(2)\) and the problem is to determine the \(1\)-dimensional subspace that this glues to in \(N_X[p \posmod N_{X/S}]|_p\).  The key observation in \cite[Lemma 8.4]{aly} is that the cone \(S\) is everywhere tangent to the plane \(\Lambda\) along \(L\), in particular at \(p\).  
 
 \begin{center}
 \begin{tikzpicture}
\draw (2, 5) .. controls (0, 5) and (-1, 2.5) .. (0, 1.5);
\draw (0, 1.5) .. controls (0.5, 1) and (2, 0.75) .. (2, 1.5);
\draw (0.05, 1.55) .. controls (0.5, 2) and (2, 2.25) .. (2, 1.5);
\draw (-0.05, 1.45) .. controls (-1, 0.5) and (1, 0) .. (2, 0);
\filldraw (3, 4.125) circle[radius=0.05];
\draw[densely dotted] (0, 4.5) -- (4, 4);
\draw (0.5, 4.4375) -- (4, 4);
\draw[densely dotted] (3, 4.125) -- (-0.3, 3.75);
\draw[densely dotted] (3, 4.125) -- (-0.25, 4);
\draw[densely dotted] (3, 4.125) -- (-0.15, 4.25);
\draw[densely dotted] (3, 4.125) -- (0.2, 4.75);
\draw[densely dotted] (3, 4.125) -- (0.5, 5);
\draw[densely dotted] (3, 4.125) -- (0.9, 5.15);
\draw (2.1, 0) node{{\tiny \(X\)}};
\draw (3.5, 3.93) node{{\tiny \(L\)}};
\draw (3, 3.95) node{{\tiny \(q\)}};
\draw (0.45, 4.55) node{{\tiny \(p\)}};
\filldraw (0.5, 4.4375) circle[radius=0.05];
\draw[densely dotted] (1.5, 5.32) -- (-0.5, 3.56) -- (3, 3.1225) -- (5, 4.8825) -- (1.5, 5.32);
\draw (4.8, 4.5) node{\footnotesize \(\Lambda\)};
\end{tikzpicture}
\end{center}

 Thus the two \(1\)-dimensional subspaces \(\O_{\pp^1}(2) \simeq N_{L/\Lambda}(p)|_p\) and \(N_{L/S}(p)|_p\) agree in \(N_{X\cup L}|_p\).  Since \(N_{L/S}(p)|_p\) glues to \(N_{X/S}(p)|_p\) --- they are both identified with the fiber at \(p\) of the bundle \(N_{X \cup L/S}\) --- this identifies the distinguished \(1\)-dimensional subspace.  This leads to an important inductive tool, which we state in full generality since the proof is no harder than for rational curves in \(\pp^3\).
 
 \begin{lem}[{\cite[Proposition 4.2]{grass}}]\label{lem:1sec}
Let \(X \subset \pp^r\) be a smooth curve, and let \(L = \overline{p,q}\) be a quasitransverse \(1\)-secant line.  If the bundle \(N_X[2p \posmod N_{X \to q}]\) satisfies interpolation, then so does the normal bundle of a general smoothing of \(X \cup L\).
 \end{lem}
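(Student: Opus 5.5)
By semicontinuity, it suffices to check interpolation for the normal bundle of the nodal curve \(X \cup L\). Interpolation is an open condition of cohomology-vanishing type: \(H^1\) vanishing of twists by general effective divisors, together with the nonspecial condition. The plan is to reduce this to the corresponding property of \(N_X[2p \posmod N_{X \to q}]\). The tool is a Mayer--Vietoris-type exact sequence that splits \(N_{X \cup L}\) into a piece on \(X\) and a piece on \(L\) that is trivially well-behaved.

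First I restrict to \(L\) and take the kernel. This gives
\[0 \to N_{X\cup L}|_X(-p) \to N_{X \cup L} \to N_{X \cup L}|_L \to 0,\]
but that places the twist on the wrong side. Instead I use the sequence
\[0 \to \mathcal{K} \to N_{X \cup L} \to N_{X\cup L}|_X \to 0,\]
with \(\mathcal{K} = N_{X\cup L}|_L(-p)\). By Example~\ref{ex:1sec}, \(\mathcal{K} \simeq \O_{\pp^1}(1)\oplus \O_{\pp^1}^{\oplus r-2}\). Twisting down by general points on \(L\) is not useful, because interpolation concerns general points of the whole curve, and these can be specialized onto \(X\). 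It is cleaner to kill the summands of \(N_{X\cup L}|_L\) other than the distinguished \(\O(2)\). To do this I pass to the subsheaf \(N' \subset N_{X\cup L}\) whose sections on \(L\) lie in the distinguished summand \(\O_{\pp^1}(2) = N_{L/\Lambda}(p)\). I then show that restriction to \(X\) identifies \(N'|_X\) with a modification of \(N_X[p\posmod N_{X/S}]\).

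The key input is the gluing identification established just before the lemma. Since \(S\) is tangent to \(\Lambda\) along \(L\), the fiber at \(p\) of the distinguished \(\O(2)\) is glued to \(N_{X/S}(p)|_p\), and \(N_{X/S} = N_{X\to q}\) on \(X\). A section of \(N'\) is therefore a section \(\sigma\) of \(N_X[p\posmod N_{X\to q}]\) together with a section of \(\O_{\pp^1}(2)\) on \(L\) whose value at \(p\) matches \(\sigma(p)\) in the \(N_{X \to q}\) direction. Since \(\O_{\pp^1}(2)\) has sections with arbitrary value and first derivative at \(p\), the extension is always possible, with a \(2\)-dimensional freedom left over. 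That freedom in the derivative at \(p\) corresponds to allowing \(\sigma\) a further pole along \(N_{X\to q}\) at \(p\); this is exactly how the count \(\chi\) matches. The upshot I aim for is an exact sequence
\[0 \to \O_{\pp^1}(2)(-p-\text{pt}) \to N' \to N_X[2p \posmod N_{X\to q}] \to 0,\]
or, equivalently, an isomorphism at the level of sections of all relevant twists. The first term is \(\O_{\pp^1}\) (or \(\O(-1)\) after a further twist), so it has no \(H^1\).

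Interpolation of \(N_X[2p\posmod N_{X\to q}]\) then gives the needed vanishing for \(N'\), using general points specialized onto \(X\). From there I pass to \(N_{X\cup L}\): the quotient \(N_{X\cup L}/N'\) is \(\O_{\pp^1}(1)^{\oplus r-2}\) supported on \(L\), twisted down appropriately, and it has vanishing \(H^1\). Ranks and degrees add up correctly because \(\deg N_{X\cup L} = \deg N_X + 2 + (r+1) - 2 +\dots\), which I would check against the formula. Interpolation then transfers to \(N_{X\cup L}\), and by semicontinuity to the general smoothing.

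The main obstacle is making the "extra pole" identification precise. One has to show that the derivative freedom of the \(\O(2)\) summand at \(p\) genuinely produces the second-order modification \([2p\posmod N_{X\to q}]\), rather than merely the first-order one. This needs the stronger tangency statement — that \(S\) and \(\Lambda\) agree to first order along \(L\) near \(p\), not just at \(p\) — and I would verify it by a local coordinate computation at the node.
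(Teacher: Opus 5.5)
There is a genuine gap. It sits in your identification of $N'$ with an extension of $N_X[2p \posmod N_{X\to q}]$.

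On the nodal curve $X\cup L$ a vector bundle is glued at $p$ only along fibers. So a section of $N'$ is a pair $(\sigma_X,\sigma_L)$ with $\sigma_L\in H^0(\O_{\pp^1}(2))$ and $\sigma_X(p)=\sigma_L(p)$. Hence $\sigma_X(p)$ is forced to lie in the line $N_{X\to q}(p)|_p$: its components in the other $r-2$ directions must vanish, and you dropped this condition. The derivative of $\sigma_L$ at $p$ is a direction along $L$ and imposes nothing on $\sigma_X$, so it cannot produce a further pole on $X$. One has $N_X[p\posmod N_{X\to q}][p\negmod N_{X\to q}(p)]\simeq N_X[2p\posmod N_{X\to q}](-p)$ (check in a local frame), so the correct sequence is
\[0\to \O_L(1)\to N'\to N_X[2p\posmod N_{X\to q}](-p)\to 0.\]
As a check, $\chi(N')=\chi(N_{X\cup L})-2(r-2)=\chi(N_X)-r+5$, whereas your sequence would give $\chi(N_X)+3$. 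So on the central fiber the second-order modification only appears together with a \emph{negative} twist at the special point $p$. No local tangency computation at the node can change this, because the node carries no first-order gluing data.

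This also breaks your reduction to interpolation on $X\cup L$. If all points lie on $X$, the $r$ sections of $N_{X\cup L}|_L(-p)\simeq\O(1)\oplus\O^{\oplus r-2}$, extended by zero, survive every twist. Thus $h^0(N_{X\cup L}(-D))\geq r$ for all $D\subset X$, and the $H^0$-vanishing half of interpolation never holds. Vanishing of $H^1$ of $N_{X\cup L}/N'$ only transfers $H^1$-vanishing, not $H^0$-vanishing.

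Killing those sections requires two points $x_1,x_2\in L$, and then
\[h^0(N_{X\cup L}(-D_X-x_1-x_2))=h^0(N_X[2p\posmod N_{X\to q}](-p-D_X)).\]
This is the hypothesis bundle twisted down at the special point $p$. Interpolation survives positive twists, but not twisting down at a special point, so the hypothesis gives no control here.

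The missing idea is to pass to a different limit of $N_C$:
\begin{enumerate}
\item On the smooth total space $\mathcal{C}$ of the smoothing, $L$ is a Cartier divisor and a $(-1)$-curve, so you can form $\mathcal{N}'=\mathcal{N}[L\posmod N_{L/\Lambda}(p)](L)$.
\item Using $\O_{\mathcal{C}}(L)|_L\simeq\O_L(-1)$, one gets $\mathcal{N}'|_L\simeq\O_L^{\oplus r-1}$.
\item Using $\O_{\mathcal{C}}(L)|_X\simeq\O_X(p)$ together with the gluing identification $N_{L/\Lambda}(p)|_p=N_{X\to q}(p)|_p$, one gets $\mathcal{N}'|_X\simeq N_X[2p\posmod N_{X\to q}](p)$. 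This twist by the divisor $L$ on the surface is where the extra pole really comes from.
\item Since $\mathcal{N}'$ is trivial on $L$, it descends to the contraction of $L$, whose special fiber is $X$.
\end{enumerate}
So $N_C$ specializes to $N_X[2p\posmod N_{X\to q}](p)$, a positive twist of the hypothesis bundle, which therefore satisfies interpolation. The tangency of $S$ and $\Lambda$ along $L$ that you already quoted is all that is needed; no first-order refinement is required.
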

 \begin{proof}[Proof sketch]\
 We retain the notation of \(S\) as the cone over \(X\) with vertex \(q\).
 Consider a \(1\)-parameter smoothing \(\C \to \Spec K \llbracket t \rrbracket\) with general fiber \(C\) and special fiber \(X \cup L\).  Let \(\N\) be the bundle on \(\C\) whose restriction to \(C\) is the normal bundle \(N_C\) and whose restriction to \(X \cup L\) is the bundle \(N_{X \cup L}\).  Define 
 \(\N' \colonequals \N[L \posmod N_{L/\Lambda}(p)](L)\).
Using the exact sequence in \eqref{eq:mod_res} with \(Q = \O_{\pp^1}(1)^{\oplus r-2}\) and \(F = N_{L/\Lambda}(p) \simeq \O_{\pp^1}(2)\), we see that \(\N'|_L \simeq \left( \O_{\pp^1}(1)^{\oplus{r-1}}\right)(-p) \simeq \O_L^{\oplus r-1}\).  For the restriction to \(X\) we have
 \begin{align*}
 \N'|_X & \simeq N_{X \cup L}|_X[p \posmod N_{X/S}(p)](p),\\
 & \simeq N_{X}[p \posmod N_{X/S}][p \posmod N_{X/S}(p)](p),\\
 & \simeq N_{X}[2p \posmod N_{X/S}](p).
 \end{align*}
Since \(L\) is a \((-1)\)-curve on \(\C\), it can be contracted to yield \(\C^- \to \Spec K \llbracket t \rrbracket\), and the Theorem on Formal Functions together with the fact that a trivial bundle on \(L\) has no higher cohomology implies that \(\N'\) is pulled back from \(\C^-\).
Thus the bundle \(N_{X}[2p \posmod N_{X/S}](p)\) is a specialization of \(N_C\).  Hence, if
 \(N_{X}[2p \posmod N_{X/S}]\)  satisfies interpolation, then so does \(N_{X}[2p \posmod N_{X/S}](p)\), and consequently so does \(N_C\). 
 \end{proof}
 
 \begin{rem}
 In the statement of Lemma~\ref{lem:1sec}, we could replace \(N_X\) with some modification \(N_X'\), see \cite[Section 4]{grass} for this level of generality. This implies that we may peel off multiple \(1\)-secant lines and apply Lemma~\ref{lem:1sec} repeatedly.
\end{rem}

\begin{warning}
As the proof of Lemma~\ref{lem:1sec} shows, the bundle \(N_{X}[2p \posmod N_{L/S}](p)\) is actually the one that fits into a flat family with \(N_C\) for a general smoothing \(C\) of \(X \cup L\).  In dropping the twist by \(p\) we are reducing the degree by the rank every time we apply this degeneration.  While this does not affect the property of being perfectly balanced, it does change which balanced bundle it is isomorphic to.
\end{warning}
 
Repeatedly applying Lemma~\ref{lem:1sec} can reduce interpolation for \(N_C\) to interpolation for the multiply modified \(N_X[2p_1 \posmod N_{X \to q_1}] \cdots [2p_m \posmod N_{X \to q_m}]\), where \(X\) is a curve of  degree \(d-m\).  This is slightly problematic as an inductive strategy, however, since the modifications accrue.   The miraculous solution is that the problems with the two techniques that we have identified (1) projection sequences are often far from balanced, and (2) degeneration produces accruing modifications, are actually solutions to each other.  We will illustrate this idea in the following proof.

 \begin{proof}[Proof of Proposition~\ref{prop:odd_deg}]
 We will apply the \(1\)-secant degeneration \((d-3)/2\) times to reduce to showing that 
 \[N_{X}[2p_1 \posmod N_{X \to q_1}] \cdots [2p_{(d-3)/2} \posmod N_{X \to q_{(d-3)/2}}]\]
 is perfectly balanced, where \(X\) is a curve of degree \((d+3)/2\).  (Notice: we are crucially using that \(d\) is odd here!)  Now specialize all of the points \(q_1, \dots, q_{(d-3)/2}\) together to a common general point \(q\in C\).  Since the property of being perfectly balanced is open (for example by the semicontinuity theorem, since it is equivalent to cohomological vanishing) it suffices to show that 
  \begin{equation}\label{eq:mod_NX}
  N_{X}[2p_1 \posmod N_{X \to q}] \cdots [2p_{(d-3)/2} \posmod N_{X \to q}] \simeq N_{X}[2p_1 + \cdots + 2p_{(d-3)/2} \posmod N_{X \to q}]
  \end{equation}
  is perfectly balanced.  
 
 \begin{center}
\begin{tikzpicture}[scale=.6]
\draw[densely dotted] (1, 4.77) -- (3, -1.6);
\draw[densely dotted] (1, 4.77) -- (-1.37, -2);
\draw[densely dotted] (1, 4.77) -- (2.59, -2.75);
\draw[densely dotted] (1, 4.77) -- (0, -1.31);
\draw[densely dotted] (1, 4.77) -- (1.5, -1.155);
\draw[ultra thick, white] (1.24, 1.975) -- (1.168, 2.8135);
\draw[ultra thick, white] (0.515, 1.825) -- (0.6605, 2.7085);
\draw (2, 5) .. controls (0, 5) and (-1, 2.5) .. (0, 1.5);
\draw (0, 1.5) .. controls (0.5, 1) and (2, 0.75) .. (2, 1.5);
\draw (0.05, 1.55) .. controls (0.5, 2) and (2, 2.25) .. (2, 1.5);
\draw (-0.05, 1.45) .. controls (-1, 0.5) and (1, 0) .. (2, 0);
\filldraw (1, 4.77) circle[radius=0.05];
\draw (-1.5, -1) -- (4.5, -1) -- (3.5, -3) -- (-2.5, -3) -- (-1.5, -1);
\draw (3, -1.6) .. controls (2.75, -0.80375) and (-1.07375, -1.15375) .. (-1.37, -2);
\draw (-1.37, -2) .. controls (-1.66625, -2.84625) and (1, -2.75) .. (2.59, -2.75);
\draw (3, -1.6) .. controls (3.25, -2.39625) and (-0.45, -1.85) .. (-0.85, -1.65);
\filldraw (0.515, 1.825) circle[radius=0.05];
\filldraw (1.24, 1.975) circle[radius=0.05];
\draw[->] (1.24, 1.975) -- (1.168, 2.8135);
\draw[->] (0.515, 1.825) -- (0.6605, 2.7085);
\draw (0.36, 2.1) node{{\tiny \(p_1\)}};
\draw (1.59, 2.2) node{{\tiny \(p_m\)}};
\draw (0.91, 2.1) node{{\tiny \(\cdots\)}};
\draw (1.18, 4.66) node{{\tiny \(q\)}};
\end{tikzpicture}
\end{center}
 
 Consider the projection exact sequence~\eqref{eq:proj} for projection from \(q\).  By~\eqref{eq:mod_ses}, the modified normal bundle in \eqref{eq:mod_NX} sits in the induced short exact sequence
 \begin{equation}\label{eq:proj_d}
 0 \to N_{X \to q}(2p_1 + \cdots + 2p_{(d-3)/2}) \to  N_{X}[2p_1 + \cdots + 2p_{(d-3)/2} \posmod N_{X \to q}] \to N_{\pi_q}(q) \to 0.
 \end{equation}
 By \eqref{eq:pointing}, the subbundle is isomorphic to \(\O_{\pp^1}( (d+3)/2 + 2 + (d-3)) \simeq \O_{\pp^1}((3d+1)/2)\).
 As observed earlier, adjunction implies that the quotient is isomorphic to \(\O_{\pp^1}(3(d+3)/2 - 4) \simeq \O_{\pp^1}((3d + 1)/2)\).  Thus the sequence \eqref{eq:proj_d} is perfectly balanced, and so \eqref{eq:mod_NX} is perfectly balanced as well.   We conclude by Lemma~\ref{lem:1sec} that the same must be true for a general rational curve of odd degree \(d\).
 \end{proof}

 
 \newcommand{\etalchar}[1]{$^{#1}$}

\end{document}